\title[{Robust Online Control with Model Misspecification}]{Robust Online Control with Model Misspecification}
 \author{\Name{Xinyi Chen}$^{1,2}$ \Email{xinyic@princeton.edu}\\
  \Name{Udaya Ghai}$^{1,2}$ \Email{ughai@cs.princeton.edu}\\
  \Name{Elad Hazan}$^{1,2}$ \Email{ehazan@cs.princeton.edu}\\
  \Name{Alexandre Megretski}$^{3}$ \Email{ameg@mit.edu}\\
  \addr $1$ Department of Computer Science, Princeton University \\
   $2$ Google AI Princeton\\
  $3$  Department of Electrical Engineering and Computer Science, Massachusetts Institute of Technology
  }
\definecolor{chestnut}{rgb}{0.8, 0.36, 0.36}
\newcommand{\ltgain}{\mathrm{\ell_2}\text{-gain}}
\def\R{{\mathbb R}}
\newcommand{\A}{\mathcal{A}}
\newcommand{\K}{\ensuremath{\mathcal K}}
\newcommand{\tstart}[1]{s_{#1}}
\newcommand{\tend}[1]{e_{#1}}
\DeclareMathOperator{\OPT}{OPT}
\DeclareMathOperator{\CR}{\mathcal{C}}
\newcommand{\ignore}[1]{}
\newcommand{\eh}[1]{}
\newcommand{\ug}[1]{}
\newcommand{\xc}[1]{}
\newtheorem{assumption}{Assumption}
\DeclareMathAlphabet{\mathbfsf}{\encodingdefault}{\sfdefault}{bx}{n}
\DeclareMathOperator*{\argmin}{arg\,min}
\renewcommand{\O}{O}
\newcommand{\reals}{\mathbb{R}}
\newcommand{\eps}{\varepsilon}
\renewcommand{\leq}{~\le~}
\renewcommand{\geq}{~\ge~}
\let\oldtfrac\tfrac
\renewcommand{\tfrac}[2]{\smash{\oldtfrac{#1}{#2}}}
\let\nablaold\nabla
\renewcommand{\nabla}{\nablaold\mkern-2.5mu}
\newcommand{\mK}{\mathcal{K}}
\begin{document}

\maketitle

\begin{abstract}

We study online control of an unknown nonlinear dynamical system that is approximated by a time-invariant linear system with  model misspecification. Our study focuses on  {\bf robustness}, a measure of how much deviation from the assumed linear approximation can be tolerated by a controller while maintaining finite $\ltgain$. 

A basic methodology to analyze robustness is via the small gain theorem.  However, as an implication of recent lower bounds on adaptive control, this method can only yield robustness  that is exponentially small in the dimension of the system and its parametric uncertainty. The work of \cite{CusumanoPoollaACC1988} shows that much better robustness can be obtained, but the control algorithm is inefficient, taking exponential time in the worst case.

In this paper we investigate whether there exists an efficient algorithm with provable robustness beyond the small gain theorem. We demonstrate that for a fully actuated system, this is indeed attainable. We give an efficient controller that can tolerate robustness that is polynomial in the dimension and independent of the parametric uncertainty; furthermore, the controller obtains an $\ltgain$ whose dimension dependence is near optimal. 
\end{abstract}

\section{Introduction}

The problem of {linear} control of linear dynamical systems is well studied and understood. 
Classical algorithms such as 
$\mathcal{H}_2$ optimization (which includes
LQR and LQG) are known to be optimal in appropriate stochastic and worst case
settings, while robust $\mathcal{H}_\infty$ control is optimal in the worst case, assuming quadratic costs. Even though these results can be generalized to nonlinear
systems, the resulting optimal control synthesis requires solving partial differential
equations in high dimensional domains, usually an intractable task.
Beyond classical control methods, recent advancements in the machine learning community gave rise to efficient online control methods based on convex relaxations that minimize regret in the presence of  adversarial perturbations.  

In this paper we revisit a natural and well-studied approach of nonlinear control, where the nonlinear system is approximated by a linear 
plant with an uncertain (or misspecified)
model.
We capture the deviation of the plant dynamics 
from a linear time invariant system with an adversarial disturbance term in the system dynamics that can scale with the system state history. The amount of such deviation
that can be tolerated while maintaining system stability constitutes {\bf robustness} of the system under a given controller.

The field of adaptive control addresses the problem of
controlling linear (and non-linear) dynamical systems 
with uncertain parameters. Adaptive control algorithms
are frequently challenged on the issues of  robustness and 
transient (finite-time) performance. Here, transient performance is in contrast with asymptotic performance, and as mentioned before, robustness measures the ability to
tolerate unmodeled dynamics. A number of papers in the 1980s (e.g. 
 \citet{Rohrs1982}) pointed out a lack of robustness under model misspecification 
 for the classical {\sl model reference adaptive control} (MRAC) approach. 
One can argue that this is related to the absence of transient behavior guarantees,
such as a closed
loop $\ltgain$ bound,
with good behavior expected only asymptotically, and this is the motivation for our study.

In this paper, we show that under a fully actuated system, a properly designed 
adaptive control algorithm can exhibit a significant degree 
of robustness to unmodeled dynamics and be computationally efficient. This is in contrast to a small gain approach to analyzing robustness, where robustness is guaranteed to be inversely proportional to the $\ltgain$ of the closed loop system, excluding model misspecification. As recently shown by \cite{chen2021blackbox} via a regret lower bound, it is inevitable that the $\ltgain$  grows {\it exponentially} with the system dimension, implying a vanishing degree of robustness under the small gain theorem. 

We show that it is  possible to achieve robustness which depends {\it inverse polynomialy} on the system dimension, and independent of its parametric uncertainty, while maintaining an $\ltgain$ that grows as $2^{\O(d )}$, consistent with the known lower bounds of $2^{\tilde{\Omega}(d)}$.
Previous work by \citet{CusumanoPoollaACC1988} gives a very general, yet inefficient 
algorithm of adaptive control that achieves constant robustness for both fully actuated and under actuated systems. The algorithm assures finiteness of the
close loop $\ltgain$, but yields an excessively high $\ltgain$ bounds
(as in having $\ltgain$ that grows doubly exponentially in the dimension, in the same setting).



Our result improves upon previous work in the fully actuated setting, both in terms of computational efficiency and $\ltgain$. The controller is based on recent system identification techniques from non-stochastic control whose main component is active large-magnitude deterministic exploration. 
This technique deviates from one of the classical approaches of using least squares for system estimation and solving for the optimal controller. 
Our technique demonstrates how carefully chosen exploration for system identification can be used to bound the energy required for exploration and not to activate the system more than necessary, and yet obtain bounded $\ltgain$ up to the known lower bounds.

\subsection{Our contributions}\label{sec:contribution}
We consider the setting of a linear dynamical system with time-invariant dynamics, together with model misspecification, as illustrated in Fig.~\ref{fig:system}. 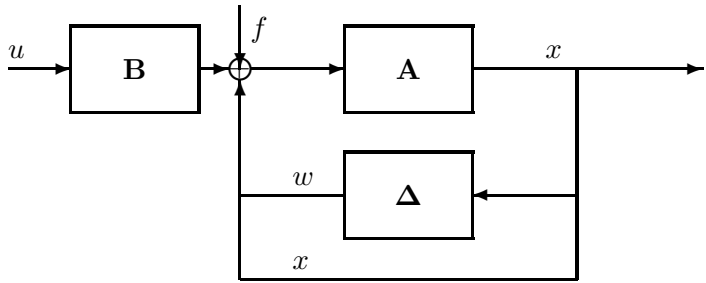
\begin{figure}[h]
\centering
\vskip5mm
\setlength{\unitlength}{0.14cm}
\begin{center}\begin{picture}(54,20)(6,42)
\thicklines
\put(0,54){\framebox(12,8){$\mathbf B$}}
\put(26,54){\framebox(12,8){$\mathbf A$}}
\put(16,58){\circle{2}}
\put(12,58){\vector(1,0){3}}
\put(-6,58){\vector(1,0){6}}
\put(17,58){\vector(1,0){9}}
\put(38,58){\vector(1,0){22}}
\put(16,64){\vector(0,-1){6}}
\put(14.9,57.3){$+$}
\put(17,61){$f$}
\put(-6,59){$u$}
\put(45,59){$x$}
\put(26,42){\framebox(12,8){$\mathbf \Delta $}}
\put(48,58){\line(0,-1){12}}
\put(48,46){\line(0,-1){8}}
\put(48,38){\line(-1,0){32}}
\put(48,46){\vector(-1,0){10}}
\put(26,46){\line(-1,0){10}}
\put(16,46){\vector(0,1){11}}
\put(16,38){\line(0,1){8}}
\put(21,47){$w$}
\put(21,39){$x$}
\end{picture}\end{center}
\caption{Diagram of the system, where $\Delta$ represents model misspecification.}
\label{fig:system}
\end{figure}

The system evolves according to the following rule, 
\begin{equation}
    x_{t+1} = A x_t +  Bu_t + \Delta_t(x_{1:t}) + f_t,
\end{equation} 
where $A, B \in \reals^{d\times d}$ is the (unknown) linear approximation to the system, $u_t , x_t, f_t \in \reals^d$ are the control, state and adversarial perturbation respectively. We refer to an upper bound on the spectral norm of $A$ as the parametric uncertainty. The perturbation $w_t = \Delta_t(x_{1:t})$ represents the deviation of the nonlinear system from the nominal system $(A, B)$. The perturbations $w_t$ crucially must satisfy the following assumption:
\begin{equation} \label{eqn:robustnessdef}
\sum_{s=1}^t \|w_s\|_2^2 \leq h^2( \sum_{s=1}^t \|x_s\|_2^2).
\end{equation}

The parameter $h$ is a measure of the robustness of the system, and is the main object of study. 
The larger $h$ is, the more model misspecification can be accommodated by the controller. Our goal is to study the limits of robustness with reasonable transient performance.  We use $\ell_2$-gain, a quantity widely studied in classical control theory, as our performance measure. The $\ell_2$-gain of a closed-loop system with control algorithm $\A$ in the feedback loop is defined as 
\begin{align}
\ltgain(\A) = \max_{f_t}   \frac{\| x_{1:T}\|_2  }{ \| f_{0:T-1} \|_2  },
\end{align}\label{eq:ltgain}
where $x_{1:T}, f_{0:T-1}\in \reals^{dT}$ are concatenations of $x_1,$ $\dots,$ $x_t,$ and $f_0, \dots, f_{T-1}$, respectively.
This notion is closely related to the competitive ratio of the control algorithm $\A$, as we show in App. \ref{app:competitive ratio}.
With this notation, we can formally state our main question: 

\medskip
\begin{frame}
\centering
\centering
{\it Is it possible to design efficient control methods that achieve robustness beyond the small gain theorem, while having $\ell_2$-gain with near-optimal \footnote{Here and elsewhere, near optimal means up to constants in the exponent.} dependence on the system dimension?}
\end{frame}

\medskip




Our study initiates an answer to this question from both lower and upper bound perspectives. In terms of upper bounds, we consider the case of a fully actuated system, and show that in this important special case, constant robustness and near-optimal $\ell_2$-gain are possible. \footnote{Obtaining similar, or even partial, results in the general under-actuated case is an exciting, important, and potentially difficult open problem, see the conclusions section.}
\begin{itemize}
        \item We give an efficient algorithm that is able to control the system with robustness $h = \Omega(\frac{1}{\sqrt{d}})$, where $d$ is the system dimension. This is independent of the parametric uncertainty.
        
       \item In addition, we show that under parametric uncertainty $M$, this algorithm achieves finite $\ltgain$ of $2^{\tilde{O}(d \log M)}$, where the dependence on system dimension is near-optimal given the lower bound of $2^{\Omega(d)}$ in \citet{chen2021blackbox}. 
        



\end{itemize}

We also consider the limits of finite $\ltgain$ and robust control. Clearly, if the system $A,B$ is not stabilizable, then one cannot obtain any lower bound on the robustness regardless of what control method is used. The distance of the system $A,B$ from being stabilizable is thus an upper bound on the robustness, and we provide a proof for completeness in App.~\ref{sec:limits on robustness}.


For our main results, we use an active explore-then-commit method for system identification and a doubling strategy to handle unknown disturbance levels. As a supplementary result, we also study system identification using the more common online least squares method, and prove that it gives constant robustness and finite $\ell_2$-gain bounds for one-dimensional systems in App.~\ref{one_dimensiona_analysis:sec}. 



\subsection{Related work}

\paragraph{Adaptive Control.} The most relevant field to our work is adaptive control, see for example the book \citep{ioannou2012robust} and survey by \citet{Tao2014}. This field has  addressed the problem of
controlling a linear dynamical system with uncertain parameters,  providing,
in the 70s, guarantees of asymptotic optimality of adaptive control algorithms.
However, reports of lack of robustness of such
algorithms to {\sl unmodeled dynamics} 
(as in the 
\citet{Rohrs1982} example)
have emerged. One can argue that this lack of robustness was due to
poor {\sl noise rejection} transient performance of such controllers, 
which can be measured in terms of {\sl $\ell_2$ induced norm (gain)} of the
overall system. The general task of designing adaptive 
controllers with finite closed loop $\ltgain$ was solved by \citet{CusumanoPoollaACC1988}, but the $\ltgain$ bounds obtained
there grow very fast with the size of parameter uncertainty,
and are therefore only good to guarantee a 
negligible amount of robustness.
It has been confirmed by \citet{megretzki_l2_gain} that even in the case of one dimensional linear models, the minimal achievable 
$\ell_2$ gain grows very fast with the size of parameter uncertainty. 
\paragraph{Nonlinear Control.}Recent research has studied provable guarantees in various complementary (but incomparable) models for nonlinear control. These include planning regret in nonlinear control \cite{agarwal2021regret}, adaptive nonlinear control under linearly-parameterized uncertainty \cite{boffi2020regret}, online model-based control with access to non-convex planning oracles \cite{kakade2020information}, control with nonlinear observation models \cite{mhammedi2020learning}, system identification for nonlinear systems \cite{mania2020active} and  nonlinear model-predictive control with feedback controllers \cite{sinha2021adaptive}. 

\paragraph{Robustness and $\ltgain$ in Control} Robust control is concerned with the ability of a controller to tolerate uncertainty in system parameters, including unmodeled dynamics present in nonlinear systems. This field has been studied for many decades, see for example \citep{robustzhou} for a survey. One fundamental method for measuring robustness is certifying stability of the closed-loop system under non-parametric uncertainty via the small gain thoerem by \cite{zames_small_gain}, where stability is implied by finite $\ltgain$. The achievability of finite $\ltgain$s for systems with unknown level of disturbance has been studied in control theory. \citet{poolla} characterize the  misspecification, or non-parametric uncertainty, tolerable for finite $\ltgain$. \citet{megretzki_l2_gain} gives a lower bound on the closed loop $\ltgain$ of adaptive controllers that achieve finite $\ltgain$ for all systems with bounded spectral norm. However, the systems studied in this paper do not contain any model misspecification.

Since the small gain theorem is known to be pessimistic, several alternative approaches have been proposed, including positivity theory and other methods of exploiting phase information of the system, constructing parameter-dependent Lyapunov functions, and using other notions of stability such as absolute stability, see \citet{beyond_small_gain} for a survey.

\paragraph{Competitive Analysis for Control} For a given controller, its $\ltgain$ is closely related to the competitive ratio, which is a quantity more often studied in the computer science community, see next section for details. \citet{yu2020competitive} gives a control algorithm with constant competitive ratio for the setting of delayed feedback and imperfect future disturbance predictions. \citet{shi2021online} proposes algorithms whose competitive ratios are dimension-free for the setting of optimization with memory, with connections to control under a known, input-disturbed system and adversarial disturbances. More recently, \citet{goel2021competitive} give an algorithm with optimal competitive ratio for known LTI systems and known quadratic costs, without misspecification.

\paragraph{System Identification for Linear Dynamical Systems.}
For an LDS with stochastic perturbations, the least squares method can be used to identify the dynamics in the partially observable and fully observable settings \citep{oymak, simchowitz18a, sarkar19a, stab}. However, least squares can lead to inconsistent solutions under adversarial disturbances, such as the model misspecification component in the system. The algorithms by \citet{simchowitz19a} and \citet{pmlr-v125-ghai20a} tolerate adversarial disturbances, but the guarantees only hold for stable or marginally stable systems. If the adversarial disturbances are bounded, \citet{hazan2019nonstochastic} and \citet{chen2021blackbox} give system identification algorithms for any unknown system, stable or not, with and without knowledge of a stabilizing controller, respectively. These techniques arose from recent results on nonstochastic control, such as works by \citet{agarwal2019online} and \citet{simchowitz2020improper}, for a comprehensive survey, see lecture notes by \citet{nsclecture}.

\subsection{Structure of the paper}

In the next section we give a few preliminaries and definitions to precisely define our setting and problem. 
In Sec.~\ref{consolidated_analysis_abs:sec} we give our main result: an efficient method with $\Omega(\frac{1}{\sqrt{d}})$ robustness and $\ltgain$ of $2^{\tilde{O}(d \log M)}$ under unknown disturbance levels.  We sketch out the analysis in Sec.~\ref{sec:analysis}.

Due to space constraints, significant technical material appears in the appendix, which can be found at \citep{FullPaper}.  App.~\ref{small_gain:sec} provides additional background on the small gain approach to robust control. In App.~\ref{sec:limits on robustness} and App.~\ref{app:competitive ratio}, we explore the limits of robustness of any controller and clarify the relationship between the performance metric $\ltgain$ and the competitive ratio, respectively. In App.~\ref{one_dimensiona_analysis:sec} we give an optimal result limited to the one-dimensional setting, where the $\ltgain$ bounds are tight in the  parametric uncertainty.   In App.~\ref{consolidated_analysis:sec} we include proofs for Sec.~\ref{consolidated_analysis_abs:sec}, and in App.~\ref{poola:sec} we provide a complete analysis of an algorithm analogous to that of \citet{CusumanoPoollaACC1988}.

\section{Preliminaries}\label{s:setup}

\paragraph{Notation.} 
We use the $\tilde{O}$ notation to hide constant and logarithmic terms in the relevant parameters. 
We use $\|\cdot\|_2$ to denote the spectral norm for matrices, and the Euclidean norm for vectors. We use $x_{s:t} \in \reals^{d(t-s+1)}$ to denote the concatenation of $x_s, x_{s+1}, \dots, x_t$, and similar notations are used for $f$, $w$, $z$. 


We make the assumptions on the model misspecification component and the disturbances in Section \ref{sec:contribution} formal.  

\begin{assumption}\label{assumption:noise_bound} We treat the model misspecification component of the system, $w_s$, as an adversarial disturbance sequence. They are arbitrary functions of past states such that for all $t$:\footnote{Notice that $w_t$ can depend on the actual trajectory of states, and not only their magnitude. This is important to capture miss-specification of the dynamics. }
$$
\|w_{1:t}\|_2   \leq h \| x_{1:t} \|_2. $$
The disturbance $f_t$ in the system is arbitrary, and let $z_t = w_t + f_t$. Without loss of generality, let $w_0 =x_0 = u_0 = 0$.
\end{assumption} 


Further, we assume the system is bounded and fully actuated.

\begin{assumption}\label{assumption:bounded_dynamics}
The magnitude of the dynamics $A, B$ are bounded by a known constant $\|A\|_2, \|B\|_2 \leq M$, where $M \ge 1$.  $B$'s minimum singular value is also lower bounded as $\sigma_{\min}(B) > L$, where $0 < L \le 1$.
\end{assumption}  

\paragraph{$\ltgain$ and Competitive Ratio.} The competitive ratio of a controller is a concept that is closely related to $\ltgain$, but is more widely studied in the machine learning community. Informally, for any sequence of cost functions, the competitive ratio is the ratio between the cost of a given controller and the cost of the optimal controller, which has access to the disturbances $f_{0:T-1}$ a priori. Importantly, the notion of competitive ratio is counterfactual: it allows for different state trajectories $x_{1:T}$ as a function of the control inputs. Under some assumptions that our algorithm satisfies, $\ltgain$ bounds can be converted to competitive ratio bounds (see Sec.~\ref{app:competitive ratio}). We choose to present our results in terms of $\ltgain$ for simplicity.

\ignore{
Our main performance guarantee is: 
\begin{theorem}
 Under Assumptions ~\ref{assumption:noise_bound}, \ref{assumption:bounded_dynamics}, for any $h < \frac{1}{2}$, Algorithm~\ref{l2_gain_1d:alg} guarantees
 \begin{align*}
 \ltgain(\A) = O\bigg(\frac{M^7}{(1-2h)^4}\bigg)~,
 \end{align*}
\end{theorem}
}

\ignore{

\subsection{Notation }

We use the following mathematical notation in this writeup:

\begin{itemize}
\item
State at time t: $x_t \in \reals^{d_x}$. 
\item
Control at time $t$: $u_t \in \reals^{d_u} $. 

\item
Perturbation at time $t$: $z_t \in \reals^{d_{z}} $

\item 
Dimensions of states, controls, perturbations and observations: $d_x,d_u,d_{z},d_y$. For now, dimensions are all one. 

\item
Transition matrix $A$, and (known) spectral norm upper bound $M$.

\item
Relation between $g_t$ and the states, specified by $h : \sum_t w_t(x_{1:t})^2 < h \sum_t x_t^2$. We assume that $h < 1/2$.


\item We introduce the following notation for sum of squared states/disturbances:
\begin{align*}
    x_{1:t} = x_1,...,x_t \\
    X_t = \sum_{s=1}^t x^2_s,  X_{t_1: t_2} = \sum_{s=t_1}^{t_2} x^2_s\\
    Z_t = \sum_{s=0}^t z^2_s , Z_{t_1: t_2} = \sum_{s=t_1}^{t_2} z^2_s\\
    F_t = \sum_{s=0}^t f^2_s , F_{t_1: t_2} = \sum_{s=t_1}^{t_2} f^2_s\\
    W_t = \sum_{s=0}^t w^2_s , W_{t_1: t_2} = \sum_{s=t_1}^{t_2} w^2_s
\end{align*}

where $z_0 = f_0 = x_1$.
\end{itemize}
}

\ignore{
\subsection{The algorithm}

\begin{algorithm}[H]
\caption{Adversarial System Identification on budget - 1d PLEASE FIX}
\label{alg1}
\begin{algorithmic}[1]
\STATE Input: budget $\hat{q}$, accuracy $\eps$.
\STATE observe $x_t$, play $u_t = \frac{M^2 (|x_t| + \hat{q}) }{\eps} $, pay cost.
\STATE observe $x_{t+1}$, play $u_{t+1} = 0$, pay cost.
\STATE observe $x_{t+2}$, return $\hat{A} = \frac{u_{t+2}}{x_t}$
\end{algorithmic}
\end{algorithm}

\begin{algorithm}[H]
\caption{$\ltgain$ algorithm}
\label{alg2}
\begin{algorithmic}[1]
\STATE Input: accuracy parameter $\eps > 0$, upper bound $M$.  Set $\tilde{q} = 0$
\FOR {$t=1,2,...,T$}
\STATE Execute $u_t = -\hat{A} x_t$.
\STATE Compute estimator and lower bound on disturbance:
$$ q_t =  \min_A \left\{ \sum_{s=1}^t \| x_{s+1} - {A} x_s - u_s \|^2 \right\} $$
$$ \hat{q} =  \left\{ \sum_{s=1}^t \| x_{s+1} - \hat{A} x_s - u_s \|^2 \right\} $$
\IF { $\hat{q}  > (1+\delta) {q}_t$ and $\hat{q} \geq 2 \tilde{q}$}
\STATE $\tilde{q} \leftarrow \hat{q}$
\STATE Call Alg \ref{alg1} with budget $\hat{q}$,  obtain estimator $\hat{A}$
\ENDIF
\ENDFOR
\end{algorithmic}
\end{algorithm}

\subsection{Sketch of why this should work}

I want to bound competitive ratio, 
$\frac{J}{\sum_t w_t^2} $

\begin{lemma}
The total cost of exploration is at most $\frac{4 M^2 q_t}{\eps}$. 
\end{lemma}
\begin{proof}
For exploration to happen, it must hold that $\tilde{q}$ has increased by a factor of two. Finally, it cannot be more than twice $q_t$, and therefore we have a geometric series that can sum up to at most the statement. 
\end{proof}

\begin{lemma}
Therefore, the total control and exploration cost is at most $O(M^2 q_t / \eps)$.
\end{lemma}
\begin{proof}
We can divide time into epochs, where exploration is not happenning. During the interior of each epoch, we know that $\hat{q} \leq (1+\delta) q_t$. This means that our estimated disturbance terms are, on average, $(1+\delta)$ approximately correct. 
\end{proof}
}
\section{Main Algorithm and Results}\label{consolidated_analysis_abs:sec}
In this section we describe our algorithm. The main algorithm, Alg.\ref{l2_gain_multi_dim:alg}, is run in epochs, each with a proposed upper bound $q$ on the disturbance magnitude $\|f_{0:T-1}\|_2$. A new epoch starts whenever the controller implicitly discovers that $q$ is not sufficiently large and increases the upper bound.  While the disturbances $f_t$ are not directly observed, with a valid upper bound $q$, the algorithm guarantees a bounded state expansion and bounded estimates of $(A, B)$.  When these conditions are broken, we deduce that the bound on $\|f_{0:T-1}\|_2$ was incorrect and restart the system identification procedure, appropriately scaling up our upper bound $q$ .\\
The algorithm explores with large controls along the standard basis.  If the upper bound $q$ indeed exceeds $\|f_{0:T-1}\|_2$, the algorithm is guaranteed to find a stabilizing controller.  By using the standard basis vectors as the exploration set, the algorithm attains robustness depending on $\sqrt{d}$ using $O(d)$ controls. In contrast, an inefficient version of the algorithm achieves dimension-free robustness, but uses an $\epsilon$-net for exploration, resulting in an exponential number of large controls for system estimation. The alternate variant and analysis can be found in App.~\ref{consolidated_analysis:sec}.

The theorem below presents the main guarantee of our algorithm.


\begin{theorem}\label{std_basis_abs:thm}
For $h \le \frac{1}{12\sqrt{d}}$, there exists $\eps$, $\alpha$ such that Alg.~\ref{l2_gain_multi_dim:alg} has $\ltgain (\A) \le (\frac{ M d}{L})^{O(d)}$.
\end{theorem}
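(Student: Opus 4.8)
The plan is to analyze the explore-then-commit controller as a composition of three guarantees: an identification bound for the active standard-basis exploration, a certainty-equivalence stability bound for the committed feedback law, and a doubling argument that pays for the unknown disturbance level. Throughout, the fully-actuated assumption ($\sigma_{\min}(B) \ge L$ from Assumption~\ref{assumption:bounded_dynamics}) is what makes the scheme work: if $A,B$ were known exactly, the law $u_t = -B^{-1}A x_t$ would give $x_{t+1}=z_t$ and a gain near $1$, so the entire difficulty is estimating $(A,B)$ accurately enough and charging the estimation cost against $\|f_{0:T-1}\|_2$.

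First I would prove the identification bound. In an exploration phase the algorithm injects controls of magnitude $\alpha$ along $e_1,\dots,e_d$, using a zero-control step as a baseline to cancel the unknown drift $Ax_t$; the finite-difference estimate of each column of $B$ (and of $A$) then has error proportional to the phase noise energy divided by $\alpha$. Since $z=w+f$ and Assumption~\ref{assumption:noise_bound} gives $\|w_{1:t}\|_2 \le h\|x_{1:t}\|_2$, the misspecification part of this error is itself proportional to $h$ times the state energy produced while probing. Using the standard basis, the $d$ columns are probed along orthogonal directions, so the operator-norm error of the full estimate aggregates the per-direction budget with a loss of only $\sqrt d$; this is precisely the source of the threshold $h \le \frac{1}{12\sqrt d}$, and is the price of $O(d)$ probes relative to the $\eps$-net variant that removes the $\sqrt d$. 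The remaining dependence on $M,L$ is pushed into the choice $\alpha = \mathrm{poly}(M/L)\cdot q$, so that the operator-norm errors $\|\hat A - A\|_2,\ \|\hat B - B\|_2$ become small enough (a fixed function of $L,M$) to guarantee $\hat B$ is invertible and $\|A - B\hat B^{-1}\hat A\|_2 \le \rho$ for a constant $\rho<1$.

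Next I would run the committed phase under certainty equivalence. The closed loop is $x_{t+1} = (A - B\hat B^{-1}\hat A)x_t + z_t$; writing this in the concatenated norm, using $\|A-B\hat B^{-1}\hat A\|_2 \le \rho$ and reabsorbing $\|w_{1:T}\|_2 \le h\|x_{1:T}\|_2$ into the left-hand side, yields $\|x_{1:T}\|_2 \le \frac{1}{1-\rho-h}\|f_{0:T-1}\|_2$ for the committed segment, which is finite once the contraction margin and the $h$-slack leave room below $1$. This step is routine linear algebra given the identification guarantee; the only care needed is that the reabsorption of the state-dependent disturbance is done over the whole horizon rather than step-by-step.

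Finally I would assemble the gain via the doubling schedule over the disturbance guess $q$. Each epoch certifies state-expansion and estimate-norm bounds valid whenever $q \ge \|f_{0:T-1}\|_2$; a violation proves $q$ was too small, triggering a restart with $q$ doubled, so at most $O(\log(\|f_{0:T-1}\|_2/q_0))$ epochs occur and the final $q$ is at most $2\|f_{0:T-1}\|_2$. The state energy spent in one epoch scales like $(Md/L)^{O(d)}q^2$, where the exponential factor comes from the open-loop amplification by $A$ (with $\|A\|_2 \le M$) over the $O(d)$ probing steps that precede commitment; since the per-epoch cost is quadratic in the geometrically growing $q$, the sum is dominated by the last epoch, giving total state energy $(Md/L)^{O(d)}\|f_{0:T-1}\|_2^2$ and hence $\ltgain(\A) \le (Md/L)^{O(d)}$. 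The hardest step is the identification bound under \emph{trajectory-dependent} misspecification: because $w_t$ is an arbitrary function of the realized $x_{1:t}$ (not merely its magnitude), large exploration controls inflate the state and therefore the very noise that corrupts the estimate, a potential feedback loop; the crux is to choose $\alpha$ large enough to dominate this self-generated noise while keeping the induced state energy chargeable to the geometric $q$-doubling, so that estimation accuracy and the $\ell_2$-gain are controlled at once.
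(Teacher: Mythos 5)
Your proposal follows essentially the same route as the paper: active standard-basis exploration with geometrically growing probe magnitudes to estimate $B$ and then $A$, certainty-equivalence feedback $K=\hat{B}^{-1}\hat{A}$ with $\|A-BK\|_2<\tfrac{1}{2}$ and reabsorption of the trajectory-dependent disturbance over the whole horizon, and an epoch/restart scheme keyed on the state-energy threshold $\alpha q$; the $\sqrt{d}$ loss from aggregating per-column errors into an operator norm is indeed the source of the $h \le \tfrac{1}{12\sqrt{d}}$ requirement. One bookkeeping detail differs from what you describe: the algorithm resets $q$ to the current cumulative state norm rather than doubling it, and when a failure is detected the new budget can overshoot $\|f_{0:T-1}\|_2$ by the full expansion factor $\alpha=(Md/L)^{O(d)}$ rather than a factor of $2$ (the paper controls this via a one-step state-growth bound in the failure cases), which costs an extra factor of $\alpha$ in the final gain but does not change its $(Md/L)^{O(d)}$ form.
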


\begin{algorithm2e}[H]
\label{l2_gain_multi_dim:alg}
\KwIn{System upper bound $M$, control matrix singular value lower bound $L$, system identification parameter $\eps$, threshold parameter $\alpha$.} 
Set $q=0, K = 0$.\\
\While{$t \le T$}{
Observe $x_t$.\\
\eIf{ $\|x_{1:t}\|_2 > \alpha q$}{
Update $q = \|x_{1:t}\|_2$.\\
Call Alg.~\ref{sysid_multi_dim:alg} with parameters $(q, M,L, \eps, \alpha)$, obtain updated $K$ and budget $q$.}
{
Execute $u_t = -K x_t$.\\
$t \leftarrow t+1$
}}
\caption{$\ltgain$ algorithm}
\end{algorithm2e}

\begin{algorithm2e}[h]
\label{sysid_multi_dim:alg}
\KwIn{Disturbance budget $q$, system upper bound $M$, control matrix singular value lower bound $L$, system identification parameter $\eps$, threshold parameter $\alpha$.}
Call Alg.~\ref{controlid_multi_dim:alg} with parameters $(q, M,L, \eps, \alpha)$, obtain estimator $\hat{B}$ and updated budget $q$. Suppose the system evolves to time $t'= t+d$.\\
Set $q' = 4^{2d}M^{2d}\eps^{-d}q$.\\
\For{i = 0, 1, \ldots, $2d-1$}{
Observe $x_{t'+i}$.\\
\If{ $\|x_{1:t'+i}\|_2 > \alpha q$}{
Restart SysID from Line 2 with $q = \|x_{1:t'+i}\|_2$.
}
\eIf {$i$ \text{is even}}
{Play $u_{t'+i} = \xi_{i/2} \hat{B}^{-1} e_{i/2 + 1} $, $\xi_{i/2}= \frac{4^{3i/2}M^{3i/2+2}q'}{\eps^{i/2+1}}$.}
{Play $u_{t'+i} = 0$.}
}
Observe $x_{t'+2d}$, compute
$$\hat{A} = [\frac{x_{t'+2}}{\xi_0}  \cdots \frac{x_{t' + 2d}}{\xi_{d-1}}]~. $$
\If{ $\|\hat{A}\|_2 > 2M$}{
Restart SysID from Line 2 with $q = \|x_{1:t'+2d}\|_2$.
}
Return $q, K = \hat{B}^{-1}\hat{A}$\\
\caption{Adversarial System ID on Budget}
\end{algorithm2e}

\begin{algorithm2e}[H]
\label{controlid_multi_dim:alg}
\KwIn{Disturbance budget $q$, system upper bound $M$, control matrix singular value lower bound $L$, system identification parameter $\eps$, threshold parameter $\alpha$.}
\For{i = 0, 1, \ldots, $d-1$}{
Observe $x_{t+i}$.\\
\If{ $\|x_{1:t+i}\|_2 > \alpha q$}
{Restart SysID with $q = \|x_{1:t+i}\|_2$.}
Play $u_{t+i} = \lambda_i e_{i+1} $, $\lambda_i= \frac{4^{2i}M^{2i+1}q}{\eps^{i+1}}$.\\
}
Observe $x_{t+d}$, compute $$\hat{B} = [\frac{x_{t+1}}{\lambda_0} \cdots \frac{x_{t+d}}{\lambda_{d-1}}].$$ 
\If { $\|x_{1:t+d}\|_2 > \alpha q_k$ or $\sigma_{\min}(\hat{B}) < L/2$}
{Restart SysID with $q = \|x_{1:t+d}\|_2$.}
Return $q, \hat{B}$
\caption{Adversarial Control Matrix ID on Budget}
\end{algorithm2e}

\section{Analysis}\label{sec:analysis}
The algorithm has three components: exploration to estimate $B$, exploration to estimate $A$, and controlling the system with linear controller $K = \hat{B}^{-1}\hat{A}$.  The parameter $\alpha$ serves as a relative upper bound, where the state energy $\|x_{1:T}\|_2$ is guaranteed not to surpass $\alpha q$ if $q$ is a true upper bound on $\|f_{0:T-1}\|_2$.  We first analyze the case if the upper bound on the disturbance magnitude is correct and $\|f_{0:T-1}\|_2 \le q$.  In this case, the algorithm is designed with a suitable threshold $\alpha$ such that a new epoch will not be started and we are guaranteed to obtain a stabilizing controller. Note that in both exploration stages, the state can grow exponentially, so exploratory controls must also grow to keep up.  

\paragraph{Epoch Notation.} We define epochs in terms of rounds of system identification.  In particular, for the $k$th epoch, $\tstart{k}$ denotes the iteration number $t$ on the $k$th call to the system identification procedure Alg.~\ref{sysid_multi_dim:alg}, and $\tend{k} = \min(\tstart{k+1} -1, T)$ is the iteration number of the end of the epoch.  As such, within an epoch, $q$ is fixed, so we denote $q_k = \|x_{1:\tstart{k}}\|_2$ the value of $q$ within epoch $k$.

\paragraph{Identifying $B$ (see App.~\ref{sec:B_est}).} 
The first step involves identifying the control matrix using Alg.~\ref{controlid_multi_dim:alg}.  The following lemma shows that the control identification process will produce an accurate estimate of $B$ in the spectral norm with singly-exponential growth in the state energy.  Because our final controller is $K= \hat{B}^{-1} \hat{A}$, we also bound the distance of $B \hat{B}^{-1}$ from identity in order to properly stabilize the system.

\begin{lemma}\label{controlid:lem}
Suppose $\|f_{0:T-1}\|_2 \le q_k$ and $\alpha \ge 4^{2d}M^{2d}\eps^{-d}$, then running Alg.~\ref{controlid_multi_dim:alg} with $\eps \le \frac{L}{12\sqrt{d}}$ produces $\hat{B}$ such that $\|\hat{B}- B\|_2 \le 3 \eps\sqrt{d}$ and $\|B\hat{B}^{-1} - I\|_2  \le \frac{1}{2}$, with $\|x_{1:\tstart{k}+d}\|_2 \le   4^{2d}M^{2d}q_k\eps^{-d}$.
\end{lemma}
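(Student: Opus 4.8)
The plan is to run a single induction on the exploration index $i$ that simultaneously controls the state energy and the accuracy of the column estimates. Write $s \defeq \tstart{k}$ for the epoch's start iteration and $X_i \defeq \|x_{1:s+i}\|_2^2$, so $X_0 = q_k^2$. Since the control played at step $s+i$ is $u_{s+i} = \lambda_i e_{i+1}$, the dynamics give $x_{s+i+1} = A x_{s+i} + \lambda_i B e_{i+1} + z_{s+i}$, and therefore the $(i{+}1)$-th column of $\hat B$ satisfies $\hat B e_{i+1} - B e_{i+1} = (A x_{s+i} + z_{s+i})/\lambda_i$. The whole lemma reduces to making each of these column errors small, which in turn requires $\lambda_i$ to dominate both $\|A x_{s+i}\|$ and $\|z_{s+i}\|$.

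First I would establish the state-energy bound $\sqrt{X_i} \le 4^{2i}M^{2i}q_k\eps^{-i} \defeq D_i$ by induction on $i$, the base case $i=0$ being the definition of $q_k$. For the step, bound $\|z_{s+i}\| = \|w_{s+i} + f_{s+i}\| \le h\sqrt{X_i} + q_k$, using $\|w_{s+i}\| \le \|w_{1:s+i}\|_2 \le h\|x_{1:s+i}\|_2$ from Assumption~\ref{assumption:noise_bound} and $\|f_{s+i}\| \le \|f_{0:T-1}\|_2 \le q_k$. The crucial structural point is that $w_{s+i}$ depends only on $x_{1:s+i}$, so it is already controlled by the inductive hypothesis \emph{before} $x_{s+i+1}$ is formed; this is what makes the recursion well-founded despite $w$ being an arbitrary function of the trajectory. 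Plugging into $\|x_{s+i+1}\| \le M\|x_{s+i}\| + M\lambda_i + h\sqrt{X_i} + q_k$ and using $\|x_{s+i}\| \le \sqrt{X_i} \le D_i$, the explicit choice $\lambda_i = 4^{2i}M^{2i+1}q_k\eps^{-i-1}$ makes $M\lambda_i$ dominate every other term (each of $MD_i$, $hD_i$, $q_k$ is at most $M\lambda_i$ since $h,\eps \le 1 \le M$), so $\|x_{s+i+1}\| \le 4M\lambda_i$. Since also $D_i \le 4M\lambda_i$, we get $X_{i+1} = X_i + \|x_{s+i+1}\|^2 \le 2(4M\lambda_i)^2$, and $\sqrt{X_{i+1}} \le \sqrt 2\,(4M\lambda_i) \le D_{i+1}$, closing the induction. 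Taking $i=d$ gives $\|x_{1:s+d}\|_2 \le 4^{2d}M^{2d}q_k\eps^{-d}$, and because $\alpha \ge 4^{2d}M^{2d}\eps^{-d}$ this simultaneously certifies that none of the state-energy restart checks in Alg.~\ref{controlid_multi_dim:alg} fire.

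Next I would bound the columns. Substituting the state bound into $\|\hat B e_{i+1} - B e_{i+1}\| \le (M\|x_{s+i}\| + h\sqrt{X_i} + q_k)/\lambda_i \le (MD_i + hD_i + q_k)/\lambda_i$ and simplifying with the explicit $\lambda_i$, the three ratios evaluate to $\eps$, $h\eps/M \le \eps$, and $\eps^{i+1}/(4^{2i}M^{2i+1}) \le \eps$ respectively, so each column error is at most $3\eps$. Summing over the $d$ columns through the Frobenius norm yields $\|\hat B - B\|_2 \le \|\hat B - B\|_F \le 3\eps\sqrt d$, which is the first claim. For the second claim I would apply Weyl's inequality together with Assumption~\ref{assumption:bounded_dynamics}: $\sigma_{\min}(\hat B) \ge \sigma_{\min}(B) - \|\hat B - B\|_2 > L - 3\eps\sqrt d \ge \tfrac{3L}{4}$ since $\eps \le \tfrac{L}{12\sqrt d}$; in particular $\sigma_{\min}(\hat B) > L/2$, so the singular-value restart does not fire either. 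Writing $B\hat B^{-1} - I = (B - \hat B)\hat B^{-1}$ then gives $\|B\hat B^{-1} - I\|_2 \le \|\hat B - B\|_2 / \sigma_{\min}(\hat B) \le (3\eps\sqrt d)/(3L/4) = 4\eps\sqrt d/L \le \tfrac13 \le \tfrac12$.

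The main obstacle is calibrating the induction in the first step so that the per-step blow-up factor is exactly $16M^2/\eps$: the exploratory magnitudes $\lambda_i$ must be large enough to swamp the adversarial disturbance term $h\sqrt{X_i}$ (whose dependence on the entire state history is the real source of difficulty) yet small enough that the accumulated state energy stays within the single-exponential budget $4^{2d}M^{2d}q_k\eps^{-d}$. It is precisely this two-sided balance that forces the specific geometric schedule of $\lambda_i$ hard-coded into the algorithm, and verifying that the constants survive both the growth recursion and the column-error bound simultaneously is where the care is needed.
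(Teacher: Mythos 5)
Your proof is correct and follows essentially the same route as the paper's: the same induction establishing $\|x_{1:\tstart{k}+i}\|_2 \le 4^{2i}M^{2i}q_k\eps^{-i}$, the same per-column error bound of $3\eps$ converted to a spectral bound via the Frobenius norm, and the same singular-value perturbation argument for $\|B\hat{B}^{-1}-I\|_2$. The only differences are cosmetic (working with squared state energies in the induction, and quoting Weyl rather than Ky Fan for the $\sigma_{\min}(\hat{B})$ bound).
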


The algorithm works by probing the system with scaled standard basis vectors. With sufficiently large scaling, $x_{t+1} = Ax_t + Bu_t + z_t \approx Bu_t$.  This allows us to estimate $B$ one column at a time.  Arbitrarily large probing controls can yield an arbitrarily accurate estimate of $B$, though the magnitude of such controls will factor into the resultant $\ltgain$.  This accuracy-gain trade off is balanced deeper in the analysis.

 \paragraph{Identifying $A$ (see App.~\ref{sec:A_est}).}
 Once we have an accurate estimate of $B$, we use Alg.~\ref{sysid_multi_dim:alg} to produce an estimate $\hat{A}$ that is $O(h)$ accurate in each of the standard basis directions, again with a singly exponential state energy growth.
 
\begin{lemma}\label{approx_A:lem_2}
 Suppose $\|f_{0:T-1}\|_2 \le q_k$ and $\alpha > R= (4M)^{5d}\eps^{-2d}$, then Alg.~\ref{sysid_multi_dim:alg} produces $\hat{A}$ such that
 \begin{align*}
 \max_{i \in [d]} \|(A- \hat{A}) e_i\|_2 \le \frac{28\eps M\sqrt{d}}{L} +  3h~,
 \end{align*}
 with $\|x_{1:t'+2N}\|_2 \le  Rq_k$.
\end{lemma}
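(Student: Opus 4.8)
The plan is to condition on the hypothesis that the budget is valid, $\|f_{0:T-1}\|_2\le q_k$, and to establish two things in sequence: that the cumulative state energy never exceeds $Rq_k<\alpha q_k$, so that none of the $\|x_{1:\cdot}\|_2>\alpha q$ restart tests fire, and that the returned estimate satisfies the stated per-column bound. I would begin by invoking Lemma~\ref{controlid:lem} to obtain $\hat B$ with $\|\hat B-B\|_2\le 3\eps\sqrt d$ and $\sigma_{\min}(\hat B)\ge L/2$, hence $\|\hat B^{-1}\|_2\le 2/L$. Sharpening the lemma's crude bound $\|B\hat B^{-1}-I\|_2\le\tfrac12$, I would write $B\hat B^{-1}-I=(B-\hat B)\hat B^{-1}$ to get the quantitative estimate $\|B\hat B^{-1}-I\|_2\le 6\eps\sqrt d/L\le\tfrac12$ (using $\eps\le L/(12\sqrt d)$). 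The same lemma also supplies the starting energy $\|x_{1:t'}\|_2\le q'=4^{2d}M^{2d}\eps^{-d}q_k$.

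The core of the argument is a growth induction on the individual states during the $A$-identification loop. Writing out the two-step response to the injection at an even step, the control $u_{t'+2j}=\xi_j\hat B^{-1}e_{j+1}$ followed by a zero control yields
\[
x_{t'+2j+2}=A^2x_{t'+2j}+\xi_j AB\hat B^{-1}e_{j+1}+Az_{t'+2j}+z_{t'+2j+1}.
\]
I would bound $\|x_{t'+i}\|_2$ by induction on $i$: the injected term has norm $\|Bu_{t'+2j}\|_2=\xi_j\|B\hat B^{-1}e_{j+1}\|_2\le\tfrac32\xi_j$, and this dominates both the propagated previous state $\|Ax\|_2\le M\|x\|_2$ and the disturbance. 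The disturbance is controlled without circularity through $\|w_s\|_2\le\|w_{1:s}\|_2\le h\|x_{1:s}\|_2$, i.e.\ in terms of the already-bounded \emph{past} energy, together with $\|f_s\|_2\le q_k$. Since consecutive injection magnitudes satisfy $\xi_j/\xi_{j-1}=64M^3/\eps\gg1$, the states grow geometrically and the cumulative energy $\|x_{1:t'+2d}\|_2$ is dominated by its last term $\approx\tfrac32M\xi_{d-1}$; summing the geometric tail and comparing with $\xi_{d-1}=4^{5d-3}M^{5d-1}\eps^{-2d}q_k$ gives $\|x_{1:t'+2d}\|_2\le Rq_k$, so no restart is triggered.

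For the estimate, the $(j{+}1)$-th column of $\hat A$ is $x_{t'+2j+2}/\xi_j$, so from the display above
\[
(\hat A-A)e_{j+1}=\frac{A^2x_{t'+2j}}{\xi_j}+A(B\hat B^{-1}-I)e_{j+1}+\frac{Az_{t'+2j}+z_{t'+2j+1}}{\xi_j}.
\]
I would bound the three terms separately. The carryover term is $O(\eps)$ because $\|x_{t'+2j}\|_2\lesssim M\xi_{j-1}$ and $\xi_{j-1}/\xi_j=\eps/(64M^3)$. The $\hat B$-error term is at most $M\|B\hat B^{-1}-I\|_2\le 6\eps M\sqrt d/L$. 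For the disturbance term I would split $z=w+f$: the $f$ contribution is negligible since $\|f_s\|_2\le q_k\ll\xi_j$, while for $w$ the dominant piece is $\|w_{t'+2j+1}\|_2/\xi_j\le h\|x_{1:t'+2j+1}\|_2/\xi_j$, and because the state right after the injection has magnitude $\approx\tfrac32\xi_j$ (so $\|x_{1:t'+2j+1}\|_2\le 2\xi_j$), this is at most $\approx2h$; the residual $w_{t'+2j}$ piece is $O(h\eps)$. Collecting the three terms and absorbing the small constants into the $\eps$ term yields $\|(\hat A-A)e_{j+1}\|_2\le 28\eps M\sqrt d/L+3h$, and taking the maximum over $j$ proves the bound.

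The main obstacle is exactly the interplay flagged in the disturbance term: since $w_s$ is an arbitrary state-dependent function and the algorithm deliberately drives the state to magnitude $\sim\xi_j$, the instantaneous disturbance $\|w_{t'+2j+1}\|_2$ can itself be as large as $\sim h\xi_j$, i.e.\ of the same order as the very signal used to probe $A$, so it does \emph{not} shrink as the probe grows. The insight that rescues the argument is that $w$ scales with $x$, which scales with $\xi_j$, so the ratio entering the estimate stays $O(h)$ uniformly in $j$, and the geometric separation $\xi_j/\xi_{j-1}\gg1$ prevents the error in one column from contaminating the next. A final check I would include is that $\eps$ is taken small enough (as Thm~\ref{std_basis_abs:thm} permits) that $\|\hat A-A\|_2\le\sqrt d\,\max_i\|(\hat A-A)e_i\|_2\le M$, so that $\|\hat A\|_2\le 2M$ and the remaining restart test also passes.
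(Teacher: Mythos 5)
Your proposal is correct and follows essentially the same route as the paper's proof: the same two-step response decomposition $x_{t'+2j+2}=A^2x_{t'+2j}+\xi_j AB\hat B^{-1}e_{j+1}+Az_{t'+2j}+z_{t'+2j+1}$, the same growth induction on state energy (the paper's Lem.~\ref{approx_A_gen:lem_1}), the same three-way error split (carryover, $\hat B$-bias via $\|B\hat B^{-1}-I\|_2\le 6\eps\sqrt d/L$, and the state-dependent disturbance bounded through $\|w_s\|_2\le h\|x_{1:s}\|_2\lesssim h\xi_j$). The only cosmetic difference is that you use the explicit closed form for $\hat A$ directly, whereas the paper's general proof phrases the last step through optimality of the argmin $\Phi(\hat A)\le\Phi(A)$, which is where its factor of $2$ in the final constant comes from.
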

Identification of $A$ in Alg.~\ref{sysid_multi_dim:alg} works by applying controls $u_t =\xi \hat{B}^{-1}v_t$ every other iteration, where $v_t$ is a standard basis vector and $\xi$ is a large constant such that $x_{t+1} \approx Ax_t + \xi v_t + z_t \approx \xi v_t$.  One more time evolution with zero control gives $x_{t+2} = Ax_{t+1} + z_{t+1} \approx \xi Av_t + z_{t+1}$.  By Assumption~\ref{assumption:noise_bound}, $\|z_{t+1}\|_2 \le h\|x_{1:t+1}\|_2 + \|f_{0:t+1}\|_2 = O(h\xi+ q)$.  As a result, we have $\|\frac{x_{t+2}}{\xi} - Av_t\|_2 = O(h)$. By definition of $\hat{A}$ in Line 14, we also have $\|\frac{x_{t+2}}{\xi} - \hat{A}v_t\|_2 = O(h)$, so $\|(A - \hat{A})v_t\|_2 = O(h)$.  
 Exploratory controls are preconditioned with $\hat{B}^{-1}$ to achieve robustness independent of $\sigma_{\min}(B)$.
 
By exploring with the standard basis, we assure that each row of $\hat{A}$ is accurate to $O(h)$, so $\|A - \hat{A}\|_2 \le \|A - \hat{A}\|_F \le h\sqrt{d}$.  By bounding the spectral norm of the estimation error loosely through a bound on the Frobenius norm, we only produce an accurate estimate of $A$ for $h =\Omega(1/\sqrt{d})$.  With exploration complete, we shift to stabilizing the system.

\paragraph{Stabilizing the system (see App.~\ref{sec:exploitation}).}
The system is subsequently stabilized by linear controller $K = \hat{B}^{-1}\hat{A}$.  By controlling the accuracy of $\hat{A}$ and $\hat{B}$, we guarantee the closed loop system satisfies $\|A- BK\|_2 < \frac{1}{2}$ via the following simple technical lemma:

\begin{lemma}\label{K_stability:lem}
 Suppose $\|f_{0:T-1}\|_2 \le q_k$, $\alpha \ge 4^{2d}M^{2d}\eps^{-d}$, with appropriate choice of $\eps$ the resultant controller $K$ satisfies $\|A- BK\|_2 \le \frac{1}{2}$.
\end{lemma}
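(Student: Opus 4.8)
The plan is to bound the closed-loop matrix $A - BK$ directly, where $K = \hat B^{-1}\hat A$, by inserting $\hat A$ and splitting into an ``error in estimating $A$'' term and an ``error in estimating $B$'' term. First I would write
$$A - BK = A - B\hat B^{-1}\hat A = (A - \hat A) + (I - B\hat B^{-1})\hat A,$$
so that by the triangle inequality and submultiplicativity
$$\|A - BK\|_2 \le \|A - \hat A\|_2 + \|I - B\hat B^{-1}\|_2\,\|\hat A\|_2.$$
I would then control the three factors separately, invoking Lemma~\ref{controlid:lem} and Lemma~\ref{approx_A:lem_2} together with the restart checks built into Alg.~\ref{sysid_multi_dim:alg} and Alg.~\ref{controlid_multi_dim:alg}. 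All of these guarantees are in force because the hypotheses $\|f_{0:T-1}\|_2 \le q_k$ and $\alpha$ large ensure that no restart is triggered during the epoch, so the returned $\hat A, \hat B$ are exactly the estimates those lemmas describe.

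For the first factor, Lemma~\ref{approx_A:lem_2} gives the per-column bound $\|(A-\hat A)e_i\|_2 \le \frac{28\eps M\sqrt d}{L} + 3h$; summing the squared column errors converts this to a Frobenius bound and hence $\|A - \hat A\|_2 \le \|A-\hat A\|_F \le \sqrt d\big(\frac{28\eps M\sqrt d}{L} + 3h\big) = \frac{28\eps M d}{L} + 3h\sqrt d$. For the last factor, since the restart check $\|\hat A\|_2 > 2M$ in Alg.~\ref{sysid_multi_dim:alg} did not fire, I may use $\|\hat A\|_2 \le 2M$.

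The crux is the middle factor, and I expect this to be the main obstacle. The off-the-shelf bound $\|B\hat B^{-1} - I\|_2 \le \frac{1}{2}$ from Lemma~\ref{controlid:lem} is useless here, because multiplying it by $\|\hat A\|_2 \le 2M$ leaves a residual of order $M \ge 1$, far exceeding $\frac{1}{2}$. The key observation I would exploit is that this factor is in fact $\eps$-small: writing $I - B\hat B^{-1} = (\hat B - B)\hat B^{-1}$ yields
$$\|I - B\hat B^{-1}\|_2 \le \|\hat B - B\|_2\,\|\hat B^{-1}\|_2 \le \frac{3\eps\sqrt d}{\sigma_{\min}(\hat B)} \le \frac{6\eps\sqrt d}{L},$$
using $\|\hat B - B\|_2 \le 3\eps\sqrt d$ from Lemma~\ref{controlid:lem} and $\sigma_{\min}(\hat B) \ge L/2$, the latter again guaranteed because the check $\sigma_{\min}(\hat B) < L/2$ in Alg.~\ref{controlid_multi_dim:alg} did not trigger a restart.

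Collecting the three bounds gives
$$\|A - BK\|_2 \le \frac{28\eps M d}{L} + 3h\sqrt d + \frac{6\eps\sqrt d}{L}\cdot 2M = 3h\sqrt d + \frac{\eps M(28d + 12\sqrt d)}{L}.$$
The robustness hypothesis $h \le \frac{1}{12\sqrt d}$ caps the first term at $\frac{1}{4}$, and choosing $\eps$ small enough (e.g.\ $\eps \le \frac{L}{4M(28d+12\sqrt d)}$, i.e.\ $\eps = O(L/(Md))$, which is the ``appropriate choice'' in the statement) caps the second at $\frac{1}{4}$, delivering $\|A - BK\|_2 \le \frac{1}{2}$. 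To summarize the obstacle: the only non-routine step is recognizing that the coarse $\frac{1}{2}$ bound on $\|B\hat B^{-1}-I\|_2$ must be replaced by the sharper $O(\eps\sqrt d/L)$ estimate derived from the spectral accuracy of $\hat B$ and the singular-value floor $\sigma_{\min}(\hat B)\ge L/2$; without this refinement the decomposition does not close.
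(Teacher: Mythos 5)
Your proof is correct and follows essentially the same route as the paper: the decomposition $A-BK=(A-\hat{A})+(I-B\hat{B}^{-1})\hat{A}$, the Frobenius-norm conversion of the per-column bound from Lem.~\ref{approx_A:lem_2}, the sharpened estimate $\|I-B\hat{B}^{-1}\|_2\le \frac{6\eps\sqrt{d}}{L}$ via $\|\hat{B}-B\|_2\le 3\eps\sqrt{d}$ and $\sigma_{\min}(\hat{B})\ge L/2$, and the choice $\eps=\Theta(L/(Md))$ together with $h\le\frac{1}{12\sqrt{d}}$ are exactly the ingredients of Lem.~\ref{K_stability_gen:lem} combined with Lem.~\ref{std_basis_epoch_cost:lem}. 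The only cosmetic difference is that you use $\|\hat{A}\|_2\le 2M$ from the restart check where the appendix version uses the constraint $\|\hat{A}\|_2\le M$, which costs a harmless factor of $2$ absorbed into the choice of $\eps$.
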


Now, with a stable linear system, we can bound the remaining cost of using this stabilizing controller.  In the below theorem $t^*$ represents a time such that the controller plays a stabilizing linear controller for the remainder of the time horizon.  In particular, we can view $t^*$ as the last iteration of exploration.
\begin{lemma}\label{stab_state_bound_multi_dim:lem}
If $\|f_{0: T-1}\|_2 \le q_k$, and let $t^*$ be such that $u_{t} = -K x_{t}$ for $t \ge t^* \ge \tstart{k}$, with $\|A-BK\|_2 \le 1/2$, then for $h \le \frac{1}{6}$,  
\begin{align*}
    \|x_{1:\tend{k}}\|^2_2 \le \frac{ 18 \|x_{1:t^{*}}\|^2_2 +  72 q^2_k}{7}.
\end{align*}
\end{lemma}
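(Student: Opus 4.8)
The plan is to analyze the closed-loop recursion directly on the stabilized window $[t^*, \tend{k}]$ and then fold the resulting tail energy back into a single self-referential inequality for the total energy $\|x_{1:\tend{k}}\|_2^2$. For $t \ge t^*$ the controller plays $u_t = -Kx_t$, so the dynamics collapse to $x_{t+1} = (A-BK)x_t + z_t$ with $z_t = w_t + f_t$ and, by hypothesis, $\|A-BK\|_2 \le \tfrac12$. Writing $P = \sum_{s=t^*+1}^{\tend{k}}\|x_s\|_2^2$ for the tail energy, the goal is to show $P$ is controlled by $\|x_{t^*}\|_2^2$ plus a disturbance term, and then combine with the head energy $\|x_{1:t^*}\|_2^2$.

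First I would convert the one-step contraction into a sum-of-squares bound. From $\|x_{t+1}\|_2 \le \tfrac12\|x_t\|_2 + \|z_t\|_2$ and the elementary inequality $(a+b)^2 \le 2a^2 + 2b^2$, one gets $\|x_{t+1}\|_2^2 \le \tfrac12\|x_t\|_2^2 + 2\|z_t\|_2^2$. Summing this over $t = t^*, \dots, \tend{k}-1$ and noting that $\sum_{t=t^*}^{\tend{k}-1}\|x_t\|_2^2 \le \|x_{t^*}\|_2^2 + P$ yields $P \le \tfrac12(\|x_{t^*}\|_2^2 + P) + 2Z$, where $Z = \sum_{t=t^*}^{\tend{k}-1}\|z_t\|_2^2$. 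Solving for $P$ gives $P \le \|x_{t^*}\|_2^2 + 4Z$.

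The crux is bounding $Z$, and this is where model misspecification causes the only real difficulty: $z_t = w_t + f_t$, and Assumption~\ref{assumption:noise_bound} controls $w$ only through the \emph{entire} state history, $\|w_{1:\tend{k}}\|_2^2 \le h^2\|x_{1:\tend{k}}\|_2^2$, which includes the large-magnitude exploration states lying outside the window $[t^*,\tend{k}]$. Using $\|z_t\|_2^2 \le 2\|w_t\|_2^2 + 2\|f_t\|_2^2$ together with this assumption and $\|f_{0:T-1}\|_2 \le q_k$, I would bound $Z \le 2h^2\|x_{1:\tend{k}}\|_2^2 + 2q_k^2$. Substituting into $P \le \|x_{t^*}\|_2^2 + 4Z$ and using $\|x_{t^*}\|_2^2 \le \|x_{1:t^*}\|_2^2$ produces a bound on $P$ that itself contains the total energy $\|x_{1:\tend{k}}\|_2^2 = \|x_{1:t^*}\|_2^2 + P$.

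This self-reference is resolved by moving the total-energy term to the left. Writing $T_{\mathrm{tot}} = \|x_{1:\tend{k}}\|_2^2$ and $T_* = \|x_{1:t^*}\|_2^2$, the previous step gives $T_{\mathrm{tot}} \le 2T_* + 8h^2 T_{\mathrm{tot}} + 8q_k^2$, i.e. $(1-8h^2)T_{\mathrm{tot}} \le 2T_* + 8q_k^2$. Since $h \le \tfrac16$ forces $1 - 8h^2 \ge \tfrac79$, dividing through yields exactly $\|x_{1:\tend{k}}\|_2^2 \le \tfrac{18\|x_{1:t^*}\|_2^2 + 72q_k^2}{7}$. The one place requiring care is tracking that the misspecification term genuinely involves the full-horizon energy (so the coefficient is $8h^2$, not something smaller confined to the tail), which is precisely what makes the smallness condition $h \le 1/6$ necessary; everything else is routine algebra.
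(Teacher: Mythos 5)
Your proposal is correct and follows essentially the same route as the paper: the paper establishes $\|x_{t^*:t}\|_2^2 \le 2\|x_{t^*}\|_2^2 + 4\|z_{t^*:t-1}\|_2^2$ by induction on the same one-step contraction $\|x_{t+1}\|_2^2 \le \tfrac12\|x_t\|_2^2 + 2\|z_t\|_2^2$, then bounds $\|z_{t^*:t-1}\|_2^2 \le 2h^2\|x_{1:t-1}\|_2^2 + 2\|f_{0:t-1}\|_2^2$ and resolves the resulting self-referential inequality via $1-8h^2 \ge 7/9$, exactly as you do. Your summing-and-solving for the tail energy $P$ is just the closed-form version of that induction, so the two arguments are equivalent.
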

This follows via induction arguments involving unrolling the linear dynamics. We can then obtain the following end-to-end bound by bounding $\|x_{1:t}\|^2_2$ in terms of $q_k$, plugging in  $\|x_{1:t^*}\|_2 \le R q_k$ via the exploration analysis of Lem.~\ref{approx_A:lem_2}.

\begin{lemma}\label{std_basis_epoch_cost_main:lem}
Suppose $h \le \frac{1}{12\sqrt{d}}$, and $\eps = \frac{L}{150Md}$, then if  $\|f_{0:T-1}\|_2 \le q_k$ and $\alpha = \big(\frac{4^{14} M^{8}d^{2}}{L^{2}}\big)^d$, the running Alg.~\ref{l2_gain_multi_dim:alg} has states bounded by
\begin{align*}
    \|x_{1:\tend{k}}\|_2 \le \alpha q_k~.
\end{align*}
\end{lemma}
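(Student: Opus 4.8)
The plan is to assemble the four constituent lemmas into a single epoch-level bound, conditioning throughout on the correct-upper-bound hypothesis $\|f_{0:T-1}\|_2 \le q_k$. First I would verify that the stated parameter choices meet every precondition of the lemmas to be chained. With $\eps = \frac{L}{150Md}$ and $M,d \ge 1$, $L \le 1$, one checks $\eps \le \frac{L}{12\sqrt d}$ (the hypothesis of Lemma~\ref{controlid:lem}) and that $\alpha = \big(\frac{4^{14}M^8 d^2}{L^2}\big)^d$ exceeds $R = (4M)^{5d}\eps^{-2d}$ (the hypothesis of Lemma~\ref{approx_A:lem_2}); substituting $\eps^{-2d} = (150Md/L)^{2d}$ gives $R = 4^{5d}150^{2d}M^{7d}d^{2d}L^{-2d}$, so the ratio $\alpha/R = (4^9 M/150^2)^d = (11.65\,M)^d \ge 1$, which simultaneously yields $\alpha \ge 4^{2d}M^{2d}\eps^{-d}$, the condition invoked in Lemmas~\ref{controlid:lem} and \ref{K_stability:lem}. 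Since $h \le \frac{1}{12\sqrt d} \le \frac16$, the hypothesis of Lemma~\ref{stab_state_bound_multi_dim:lem} also holds.

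Having verified the preconditions, I would chain the lemmas in the order the epoch executes. Lemma~\ref{controlid:lem} produces $\hat B$ accurate in spectral norm with $\|B\hat B^{-1}-I\|_2 \le \tfrac12$ and state energy $\|x_{1:\tstart k+d}\|_2 \le 4^{2d}M^{2d}\eps^{-d}q_k \le Rq_k$. Lemma~\ref{approx_A:lem_2} then produces $\hat A$ with $\max_i\|(A-\hat A)e_i\|_2 = O(\eps M\sqrt d/L + h)$ and, crucially, $\|x_{1:t'+2d}\|_2 \le Rq_k$. Setting $t^* = t'+2d$ (the last exploration step, after which Alg.~\ref{l2_gain_multi_dim:alg} plays $u_t=-Kx_t$), Lemma~\ref{K_stability:lem} certifies $\|A-BK\|_2 \le \tfrac12$ for $K=\hat B^{-1}\hat A$, so Lemma~\ref{stab_state_bound_multi_dim:lem} applies with $\|x_{1:t^*}\|_2 \le Rq_k$, yielding
\[
\|x_{1:\tend k}\|_2^2 \le \frac{18\|x_{1:t^*}\|_2^2 + 72q_k^2}{7} \le \frac{18R^2 + 72}{7}\,q_k^2 \le 13R^2 q_k^2,
\]
where the last step uses $R \ge 1$. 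Taking square roots gives $\|x_{1:\tend k}\|_2 \le 4Rq_k$, and since $\alpha \ge 11\,R \ge 4R$ from the ratio above, we conclude $\|x_{1:\tend k}\|_2 \le \alpha q_k$.

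Finally I would note that the same chain shows the algorithm never triggers a restart within the epoch: every partial energy $\|x_{1:t}\|_2$ is monotone in $t$ and bounded by $4Rq_k < \alpha q_k$ throughout both exploration phases (by the energy bounds in Lemmas~\ref{controlid:lem} and \ref{approx_A:lem_2}) and the stabilizing phase (by the display above), so the guard $\|x_{1:t}\|_2 > \alpha q$ is never met. This is precisely what makes the epoch well-defined and the conclusion meaningful under the correct-bound hypothesis.

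The main obstacle is not any single deep step but the bookkeeping of constants: one must track how the singly-exponential factors $R$ from both exploration phases compound, and confirm that the chosen $\alpha$ dominates the product $13R^2$ after the stabilizing lemma inflates it, while staying above the lemmas' own lower bounds on $\alpha$ so the estimation guarantees remain valid. The delicate point is the $\eps$-accuracy trade-off: $\eps$ must be small enough that $\frac{28\eps M\sqrt d}{L}+3h < \tfrac12$ so that $K$ stabilizes (Lemma~\ref{K_stability:lem}), yet this same small $\eps$ drives $R = (4M)^{5d}\eps^{-2d}$ up, and one must confirm the single fixed choice $\eps=\frac{L}{150Md}$ threads both requirements simultaneously with the given $\alpha$.
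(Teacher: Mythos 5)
Your proposal is correct and follows essentially the same route as the paper: verify the parameter preconditions, chain Lem.~\ref{controlid:lem}, Lem.~\ref{approx_A:lem_2}, Lem.~\ref{K_stability:lem}, and Lem.~\ref{stab_state_bound_multi_dim:lem} in execution order, and check that $\alpha$ dominates the resulting state-energy factor (the paper bounds the epoch-end energy by $(4M)^{6d}\eps^{-2d}q_k$ and compares to $\alpha$ using $\eps > \frac{L}{4^4 Md}$; your $4Rq_k \le \alpha q_k$ via $\alpha/R = (4^9 M/150^2)^d$ is the same computation). One imprecision worth flagging: in your closing discussion you state the stabilization requirement as $\frac{28\eps M\sqrt d}{L}+3h<\frac12$, but Lem.~\ref{approx_A:lem_2} only controls the error \emph{per standard-basis direction}, and this must be converted to a spectral-norm bound via the Frobenius norm at the cost of an extra $\sqrt d$; the condition actually needed is $\sqrt{d}\bigl(\frac{28\eps M\sqrt d}{L}+3h\bigr)+\frac{6\eps M\sqrt d}{L}<\frac12$, which is exactly why the hypothesis forces $h \le \frac{1}{12\sqrt d}$ rather than a dimension-free constant, and which the paper's proof makes explicit. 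Your chosen parameters do satisfy the corrected inequality ($\frac{34}{150}+\frac14<\frac12$), so the argument still goes through.
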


The restart mechanism of the algorithm eventually assures us that $q_k \approx \|f_{1:T-1}\|_2$ up to a multiplicative factor, providing an $\ltgain$ bound.

\paragraph{Handling changing disturbance budget (see App.~\ref{final_bound:sec}).}
We now sketch out the extension to unknown disturbance magnitude. In Alg~\ref{l2_gain_multi_dim:alg}, $q$ is the proposed upper bound on $\|f_{0:T-1}\|_2$. There are a variety of conditions for failure in the algorithms (i.e. where we have proof that $q$ was not a valid upper bound) which trigger re-exploration and the start of a new epoch.  If $q$ is indeed an upper bound, the above steps all will work without triggering a failure and we have $\|x_{1:T}\|_2 \leq \alpha q$ for some constant $\alpha$.  On the other hand, when a failure is detected, it is proof that $\|f_{0:T-1}\|_2 > q$.  We can relate the penultimate budget $q'$ to the final budget $q$ by bounding the state growth from a single time evolution where budget is exceeded. Combining the upper bound of $\|x_{1:T}\|_2$ and lower bound on $\|f_{0:T-1}\|_2$ produces an $\ell_2$-gain bound.



\section{Conclusions}

We have shown that for fully actuated systems, it is possible to control a misspecified LDS  with robustness that is independent of the system magnitude, going beyond the small gain theorem, with an efficient algorithm. In addition, our control algorithm has near-optimal dimension dependence in terms of $\ltgain$, improving upon the classical algorithm of \cite{poolla}.

The most important open question is to continue this investigation to the much more general case of underactuated systems. Are efficient and optimally-robust algorithms possible? 
Can an efficient algorithm can be derived to obtain constant robustness, independent of the dimension, and with a tighter bound on $\ltgain$ in terms of the system magnitude?

Other future directions include systems with partial observability and degenerate control matrices. 
It is also interesting to explore whether the same result can be obtained when the system inputs, not only the states, are subject to noise and misspecification. 

\bibliography{main}
\label{end_page}
\onecolumn


\appendix
\section{Small Gain Theorem}\label{small_gain:sec}

The Small Gain Theorem \citep{zames_small_gain} provides a guarantee on the stability on an interconnection of two stable systems, denoted $\mathbf S_{\Delta}$ and depicted in Fig.~\ref{fig:sg_system}.  System $S$ takes as input $(f, w)$ and produces output $(x,y)$ and $\mathbf \Delta$ takes as input $x$ and produces output $w$. The joint system $\mathbf S_{\Delta}$ can be viewed as taking input $f$ and producing output $y$. If the $\ltgain$ of $S$ is at most $\gamma$, the Small Gain Theorem guarantees stability of $\mathbf S_{\Delta}$ so long as the $\ltgain$ of $\mathbf \Delta$ is upper bounded by $\frac{1}{\gamma}$.  

The Small Gain Theorem is an important tool in understanding robustness. For a given closed-loop controlled system $\mathbf S$, the coupled system $\mathbf \Delta$ can viewed as model misspecification.  The Small Gain Theorem gives a prescription for robust control: design a controller such that the closed-loop system $\mathbf S$ has small $\ltgain$ and robustness follows.\\

While this methodology is appealing, unfortunately in our setting, such an approach yields quite weak bounds.  In particular, recent lower bounds for adaptive control without model misspecification scales with the parametric uncertainty as $M^{\Omega(d)}$.  As such, the best robustness we can hope for using a small-gain approach is on the order of $\frac{1}{M^d}$, vanishing as the parametric uncertainty grows.  In contrast, the algorithms in this work more directly tackle model misspecification, attaining robustness independent of the parametric uncertainty, albeit only for fully actuated systems.

\begin{figure}[h]
\centering
\vskip5mm
\setlength{\unitlength}{0.2cm}
\begin{center}\begin{picture}(54,20)(6,42)
\thicklines
\put(26,54){\framebox(12,8){$\mathbf S$}}
\put(16,60){\vector(1,0){10}}
\put(38,60){\vector(1,0){10}}
\put(38,56){\line(1,0){10}}
\put(26,42){\framebox(12,8){$\mathbf \Delta $}}
\put(48,56){\line(0,-1){10}}
\put(48,46){\vector(-1,0){10}}
\put(26,46){\line(-1,0){10}}
\put(16,46){\line(0,1){10}}
\put(16,56){\vector(1,0){10}}
\put(20,60.5){$f$}
\put(42,60.5){$y$}
\put(42,46.5){$x$}
\put(20,46.5){$w$}
\end{picture}\end{center}
\caption{Diagram of an interconnected system $\mathbf S_{\Delta}$}
\label{fig:sg_system}
\end{figure}
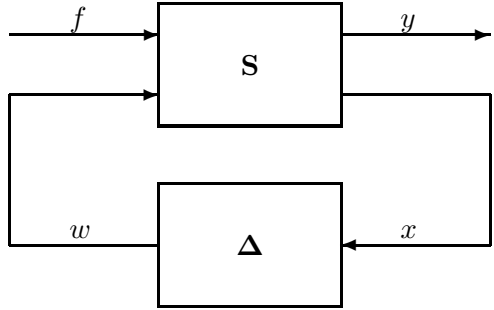
 
 For completeness, we provide a version of the Small Gain Theorem.
 \begin{theorem}[Small Gain]
 If $\ltgain$ of $\mathbf S$ is not larger than $\gamma$ and $\ltgain$ of $\mathbf \Delta$ is not larger than $\frac{1}{\gamma}$, then the $\ltgain$ of $\mathbf S_{\Delta}$ is not larger than $\gamma$.
 \end{theorem}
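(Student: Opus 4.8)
The plan is to reduce the theorem to an elementary manipulation of squared $\ell_2$ norms, exploiting the fact that the gain hypothesis on $\mathbf S$ controls the \emph{combined} output $(x,y)$ in terms of the \emph{combined} input $(f,w)$. First I would fix any closed-loop trajectory of the interconnection $\mathbf S_{\Delta}$ and name its signals: $f$ is the external input, $(x,y)$ is the output of $\mathbf S$ on input $(f,w)$, and $w$ is the output of $\mathbf \Delta$ on input $x$. Such a trajectory exists and has finite $\ell_2$ norm in our setting because $\mathbf \Delta$ is strictly causal (each $w_t$ depends only on $x_{1:t}$, while $x_{t+1}$ depends on $w_t$), so the loop is well-posed over any finite horizon; I flag this point explicitly since it is the only place where anything beyond algebra is needed.

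Next I would translate the two hypotheses into norm inequalities. The gain bound on $\mathbf S$ reads $\|x\|_2^2 + \|y\|_2^2 \le \gamma^2\big(\|f\|_2^2 + \|w\|_2^2\big)$, while the gain bound on $\mathbf \Delta$ reads $\|w\|_2 \le \tfrac{1}{\gamma}\|x\|_2$, equivalently $\gamma^2\|w\|_2^2 \le \|x\|_2^2$. Substituting the second into the first gives $\|x\|_2^2 + \|y\|_2^2 \le \gamma^2\|f\|_2^2 + \gamma^2\|w\|_2^2 \le \gamma^2\|f\|_2^2 + \|x\|_2^2$. The decisive feature is that $\|x\|_2^2$ appears on both sides and cancels \emph{without any division}, leaving $\|y\|_2^2 \le \gamma^2\|f\|_2^2$, hence $\|y\|_2 \le \gamma\|f\|_2$. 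Taking the worst case over $f$ then yields $\ltgain(\mathbf S_{\Delta}) \le \gamma$.

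The only genuinely important point is reading the hypothesis correctly — that the gain of $\mathbf S$ is measured on the \emph{stacked} signals, so the cross term $\gamma^2\|w\|_2^2$ is exactly absorbed by $\|x\|_2^2$ rather than producing a residual factor. This is also precisely what makes the boundary case $\gamma \cdot \tfrac{1}{\gamma} = 1$ admissible: because $\|x\|_2^2$ cancels rather than being divided through by $\big(1 - \gamma\cdot\gamma^{-1}\big)$, no strict-inequality margin is required to bound the output $y$, even though $x$ itself need not remain well-bounded at the margin.

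For the general infinite-horizon operator statement I would instead apply the identical inequality to the truncated signals $x_T, y_T, w_T, f_T$ in the extended space, obtaining a bound uniform in the truncation level $T$; this simultaneously certifies finiteness of the closed-loop norms and the gain bound. In the finite-horizon formulation used in this paper the truncation step is unnecessary, so the algebraic argument above suffices as stated.
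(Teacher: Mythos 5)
Your proof is correct and uses essentially the same argument as the paper: scale the gain inequality for $\mathbf \Delta$ by $\gamma^2$ and add it to the gain inequality for $\mathbf S$, so that the $\|x\|_2^2$ and $\gamma^2\|w\|_2^2$ terms cancel exactly, leaving $\|y\|_2^2 \le \gamma^2 \|f\|_2^2$. The only cosmetic difference is that the paper states the gain hypotheses with additive bias constants $C_1, C_2$, whereas you use the pure ratio form and add remarks on well-posedness and truncation; the core cancellation is identical.
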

 
 \begin{proof}
 By the provided gain bounds, there exists constants $C_1, C_2$   such that 
 \begin{align}
     \sum_{t=1}^{\infty} \big( \gamma^2 (\|f_t\|^2_2+ \|w_t\|_2^2) - ( \|x_t\|^2 + \|y_t\|^2_2) \big) > C_1  \label{l2gain_S:eq}\\
     \sum_{t=1}^{\infty} \big( \frac{1}{\gamma^2} \|x_t\|^2_2 - \|w_t\|^2 \big) > C_2 \label{l2gain_delta:eq}
 \end{align}
 
 Scaling \eqref{l2gain_delta:eq} by $\gamma^2$ and adding to \eqref{l2gain_S:eq}, we have
 \begin{align*}
     \sum_{t=1}^{\infty} \big( \gamma^2 \|f_t\|^2_2 -  \|y_t\|^2_2 \big) > \gamma^2 C_2 +C_1~.
 \end{align*}
 Thus, the $\ltgain$ of $\mathbf S_{\Delta}$ is not larger than $\gamma$.
 \end{proof}
\section{Limits on robustness in online control}
\label{sec:limits on robustness}
In this subsection we give a simple example exhibiting the limitation of robustness, and in particular showing that in the case of an unstabilizable system, it is impossible to obtain constant robustness.

\begin{definition} [Strong Controllability]\label{def:strong-controllability}
Given a linear time-invariant dynamical system $(A, B)$, let $C_k$ denote
$$
C_k = [B\ AB\ A^2B\ \cdots A^{k-1}B] \in \mathbb{R}^{d\times kd}.
$$
Then $(A, B)$ is $(k, \kappa)$ strongly controllable if $C_k$ has full row-rank, and $\|(C_kC_k^\top)^{-1}\| \le \kappa$. 
\end{definition}

\begin{lemma}
In general, a system with strong controllability $(k,\kappa)$ cannot be  controlled with robustness larger than $\frac{1}{\sqrt{\kappa}}$. 
\end{lemma}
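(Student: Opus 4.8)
The plan is to exhibit, for each pair $(k,\kappa)$, a single linear system together with an explicit adversarial misspecification $w$ that drives the state to blow up geometrically whenever $h > \frac{1}{\sqrt{\kappa}}$, so that $\ltgain(\A)=\infty$ for \emph{every} controller and no robustness beyond $\frac{1}{\sqrt{\kappa}}$ is attainable. First I would construct an underactuated system realizing the strong-controllability parameters with equality: an unstable mode (eigenvalue $\lambda$ with $|\lambda|\gg 1$) that the control can only reach through a chain of weakly-coupled coordinates, tuned so that the minimal left-singular value of $C_k$ equals $\sigma_{\min}(C_k)=\frac{1}{\sqrt{\kappa}}$ and hence $\|(C_kC_k^\tr)^{-1}\|_2=\kappa$. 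The two-dimensional case $A=\begin{bmatrix}\lambda&0\\ c&\lambda\end{bmatrix}$, $B=e_1$ already captures the entire mechanism: control enters coordinate $1$ and reaches the unstable coordinate $2$ only through the coupling $c$, and tuning $c,\lambda$ gives $\kappa$; the general $k$ case replaces this by a length-$k$ lower-bidiagonal chain.

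Next I would track the hard direction $v$ (the minimal left-singular vector of $C_k$, with $\|v^\tr C_k\|_2=\frac{1}{\sqrt{\kappa}}$, so $v\approx e_2$ in the $2$-dimensional example) and the scalar $v^\tr x_t$. The crucial observation is a trade-off faced by \emph{any} controller. To prevent the unstable coordinate from growing, the controller must cancel the drift $c\,x_t^{(1)}+\lambda\,x_t^{(2)}$, but its only lever on the hard coordinate is the state it has prepared in the well-actuated directions; cancelling the unstable term $\lambda x_t^{(2)}$ forces a compensating state $x_t^{(1)}\sim\frac{\lambda}{c}x_t^{(2)}\sim\sqrt{\kappa}\,|v^\tr x_t|$. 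The adversary then concentrates its entire per-step budget in the hard direction, playing $w_t=\pm h\|x_t\|_2\,v$, which is admissible since $\|w_t\|_2\le h\|x_t\|_2$. Because $\|x_t\|_2$ is dominated by the $\sim\sqrt{\kappa}\,|v^\tr x_t|$ compensating state, this injects a disturbance of size $\sim h\sqrt{\kappa}\,|v^\tr x_t|$. Formally I would lower-bound $|v^\tr x_{t+1}|$ by a one-variable minimization over the magnitude $a$ of the compensating state, of the form $\min_a\big(|ca+\lambda x_t^{(2)}|+h\sqrt{a^2+(x_t^{(2)})^2}\big)$, whose optimum is $\approx \tfrac{h}{c}\sqrt{c^2+\lambda^2}\,|x_t^{(2)}|$; this yields $|v^\tr x_{t+1}|\ge h\sqrt{\kappa}\,|v^\tr x_t|$ up to lower-order terms, uniformly over all controllers.

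Finally, setting $f$ to a single unit impulse exciting the hard direction and $f_t=0$ thereafter, the recursion $|v^\tr x_{t+1}|\ge \rho\,|v^\tr x_t|$ with $\rho=h\sqrt{\kappa}>1$ forces $\|x_{1:T}\|_2\ge |v^\tr x_T|\to\infty$ while $\|f_{0:T-1}\|_2=1$ stays fixed, so $\ltgain(\A)=\infty$ and robustness larger than $\frac{1}{\sqrt{\kappa}}$ is impossible. The adversary obeys the hypothesis of Assumption~\ref{assumption:noise_bound} pointwise, $\|w_t\|_2\le h\|x_t\|_2$, which immediately gives the cumulative bound $\|w_{1:t}\|_2\le h\|x_{1:t}\|_2$.

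The hard part is obtaining the constant \emph{exactly} $\frac{1}{\sqrt{\kappa}}$ and, relatedly, using the right adversary. A naive static misspecification $w_t=Dx_t$ with $\|D\|_2\le h$ only certifies impossibility up to the distance (in $A$) to an unstabilizable pair, which for this family is of order $c$ — much larger than $\frac{1}{\sqrt{\kappa}}\approx c/\lambda$ — so it badly undershoots the claimed bound. The essential idea is therefore the \emph{state-proportional, direction-concentrated} adversary $w_t=\pm h\|x_t\|_2\,v$, which exploits the very compensation the controller is forced to build: the controller cannot evade the geometric growth by partially fighting the instability while shrinking $\|x_t\|_2$ to starve the budget, and this is exactly what the one-variable minimization rules out. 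Pinning the factor to $\sqrt{\kappa}$ precisely then requires projecting onto the true $\sigma_{\min}$-direction of $C_k$ (rather than a coordinate axis) and accounting carefully for the compensating-state energy it forces, which I expect to be the main technical obstacle in the general $k$-step chain.
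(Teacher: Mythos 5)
Your construction is correct in outline and would establish the lemma, but it takes a genuinely different and substantially harder route than the paper. The paper also uses a single $2\times 2$ witness, but it chooses it so that the \emph{static} adversary you dismiss is already tight: it takes $A_\eps = \left[\begin{smallmatrix}2&\eps\\0&2\end{smallmatrix}\right]$, $B=(0,1)^\top$, for which $\kappa=\Theta(\eps^{-2})$ and, simultaneously, the distance from $A_\eps$ to an unstabilizable pair is $\eps=\Theta(1/\sqrt{\kappa})$. The misspecification $w_t=Dx_t$ with $D=\left[\begin{smallmatrix}0&-\eps\\0&0\end{smallmatrix}\right]$, $\|D\|_2=\eps$, converts the plant into $A_0$, whose first coordinate obeys $x_{t+1}(1)=2x_t(1)+f_t(1)$ with no control influence whatsoever, so no controller has finite $\ell_2$-gain; that is the entire proof. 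Your claim that a static $w_t=Dx_t$ ``badly undershoots'' is a fact about \emph{your} chosen family $A=\left[\begin{smallmatrix}\lambda&0\\c&\lambda\end{smallmatrix}\right]$, where the parametric distance to unstabilizability is $c\gg c/\lambda=1/\sqrt{\kappa}$ --- not about the lemma, which is existential (``in general \ldots cannot be controlled''), so one is free to pick a witness for which the two quantities coincide. What your harder argument buys is a strictly stronger phenomenon: the $1/\sqrt{\kappa}$ barrier can bind even when the distance to unstabilizability is much larger, via the state-proportional, direction-concentrated adversary and the one-variable minimax over the compensating state the controller is forced to build. That strength is not needed here, and the technical obstacles you flag (projecting onto the exact minimal singular direction of $C_k$, the general-$k$ chain) are avoidable --- the paper, too, stops at $k=2$, and the constant in the lemma is in any case only meaningful up to $\Theta(\cdot)$, since the paper itself only certifies $\kappa=O(\eps^{-2})$ before setting $h=\eps$.
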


\begin{proof}

Consider the two dimensional system given by the matrices
$$ A_\eps = \begin{bmatrix}
2 & \eps\\
0 & 2
\end{bmatrix}  \ , \  B = \begin{bmatrix}
0 \\
1
\end{bmatrix} $$

The Kalman matrix for this system is given by 
$$ Q = [B  \ \  AB] =  \begin{bmatrix}
0 & \eps \\
1 & 2
\end{bmatrix}$$
For $\eps > 0$, this matrix is full rank, and the system is strongly controllable with parameters $(2 , O(\frac{1}{\eps^2}))$. However, for $\eps=0$, it can be seen that the system becomes uncontrollable even without any noise, since the first coordinate has no control which can cancel it, i.e. 
$ x_{t+1}(1) = 2 x_t(1) + z_t(1) . $

For adversarial noise with robustness of $\eps$, we can convert the system $A_\eps$ to $A_0$, rendering it uncontrollable. The noise sequence will simply be
$$ w_t = \begin{bmatrix}
0 & -\eps\\
0 & 0
\end{bmatrix}  x_t .$$ 
This happens with parameter $h$ which is $\eps = \frac{1}{\sqrt{\kappa}}$.
\end{proof}

\section{Relating competitive ratio to  \texorpdfstring{$\ell_2$}-gain}\label{app:competitive ratio}
As discussed, the notion of $\ltgain$ has a very similiar spirit to a competitive ratio.  Here we relate the $\ltgain$ to the competitive ratio concretely for quadratic costs. We begin with a formal definition.
\begin{definition} 
(Competitive Ratio) Consider a sequence of cost functions $c_t(x_t, u_t)$. Let $J_T(\A, f_{0:T-1})$ denote the cost of controller $\A$ given the disturbance sequence $f_{0:T-1}$, and let $\OPT( f_{0:T-1})$ denote the cost of the offline optimal controller with full knowledge of $f_{0:T-1}$. Both costs are worst case under any model misspecification that satisfies \eqref{eqn:robustnessdef} subject to a fixed $f_{0:T-1}$. 
The competitive ratio of a control algorithm $\A$, for $w_{1:T-1}$ satisfying Assumption~\ref{assumption:noise_bound} is defined as:
\begin{align*}
    \CR(\A) = \max_{f_{0:T-1}} \frac{J_T(\A, f_{0:T-1})}{\OPT(f_{0:T-1})}.
\end{align*}
\end{definition}

The $\ltgain$ bounds the ratio between $\|x_{1:T}\|_2$ and $\|f_{0:T-1}\|_2$, while under the time-invariant cost function $c_t(x, u) = \|x\|_2^2 + \|u\|_2^2$, the competitive ratio bounds the ratio of $\|x_{1:T}\|^2_2 + \|u_{1:T}\|^2_2$ to $\OPT(f_{0:T-1})$. Here we show that $\OPT(f_{0:T-1}) = \Theta( \|f_{0:T-1}\|^2_2)$, treating $M$ and $L$ as constants.  Assuming $\|u_{1:T}\|_2$ is bounded by a constant multiple of $\|x_{1:T}\|_2$, then $\mathcal{C}(\mathcal{A}) = \Theta(\ltgain(\mathcal{A})^2)$.
\begin{theorem}\label{thm:opt_lb_abstract}
Under the time-invariant cost function $c_t(x, u) = \|x\|_2^2 + \|u\|_2^2$, for any system satisfying Assumptions~\ref{assumption:noise_bound} and \ref{assumption:bounded_dynamics}, with $h < 1/2$,
\begin{align*}
    \frac{\|f_{0:T-1}\|^2_2}{9M^2}\le \OPT(f_{0:T-1})  \le \frac{8M^2\|f_{0:T-1}\|^2_2}{L^2}~.
\end{align*}
\end{theorem}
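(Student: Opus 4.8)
The plan is to establish the two inequalities separately, using the fact that $\OPT(f_{0:T-1})=\min_{\A}\max_{w}J_T(\A,f_{0:T-1})$ is a minimum over controllers of a worst case over admissible misspecifications $w$. For the upper bound it then suffices to exhibit a single controller whose worst-case cost is small, and for the lower bound it suffices to pick one admissible $w$ and argue that every controller pays a lot against it.

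For the upper bound I would analyze the dead-beat controller $u_t=-B^{-1}Ax_t$, which is well defined because full actuation ($\sigma_{\min}(B)>L>0$) makes $B$ invertible. Substituting into the dynamics collapses them to $x_{t+1}=Ax_t+Bu_t+z_t=z_t$ with $z_t=w_t+f_t$, \emph{regardless} of how the adversary selects $w$. Hence $\|x_{1:T}\|_2=\|z_{0:T-1}\|_2$, and combining $w_0=0$ with the robustness constraint $\|w_{1:T-1}\|_2\le h\|x_{1:T-1}\|_2\le h\|x_{1:T}\|_2$ yields the self-bounding inequality $\|x_{1:T}\|_2\le h\|x_{1:T}\|_2+\|f_{0:T-1}\|_2$. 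Since $h<1/2$ this gives $\|x_{1:T}\|_2\le 2\|f_{0:T-1}\|_2$. The control energy is then bounded by $\|u_t\|_2\le\|B^{-1}\|_2\|A\|_2\|x_t\|_2\le (M/L)\|x_t\|_2$, so the total cost is at most $(1+M^2/L^2)\|x_{1:T}\|_2^2\le (2M^2/L^2)\cdot 4\|f_{0:T-1}\|_2^2$, matching the claimed $8M^2\|f_{0:T-1}\|_2^2/L^2$. As $\OPT$ is a minimum over controllers, this bounds $\OPT$ from above.

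For the lower bound I would take the admissible choice $w\equiv 0$ (which trivially satisfies $0\le h\|x_{1:t}\|_2$), so that $\max_w J_T\ge J_T|_{w=0}$ and therefore $\OPT\ge\min_{\A}J_T|_{w=0}$. Under $w=0$ the dynamics read $f_t=x_{t+1}-Ax_t-Bu_t$, so the triangle inequality and $\|A\|_2,\|B\|_2\le M$ give $\|f_t\|_2\le\|x_{t+1}\|_2+M\|x_t\|_2+M\|u_t\|_2$, and the power-mean inequality yields $\|f_t\|_2^2\le 3M^2(\|x_{t+1}\|_2^2+\|x_t\|_2^2+\|u_t\|_2^2)$. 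Summing over $t$ and using $x_0=0$ to fold $\sum_t\|x_{t+1}\|_2^2$ and $\sum_t\|x_t\|_2^2$ into $\|x_{1:T}\|_2^2$ bounds $\|f_{0:T-1}\|_2^2$ by $6M^2$ times the cost; since this holds for every controller, $\OPT\ge\|f_{0:T-1}\|_2^2/(6M^2)\ge\|f_{0:T-1}\|_2^2/(9M^2)$.

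\textbf{Main obstacle.} The one genuinely non-mechanical point is the upper bound: $w$ is chosen adversarially and may depend on the realized trajectory, so one cannot simply assume it is small. The resolution is that the dead-beat controller reduces the closed loop to $x_{t+1}=z_t$ independently of the adversary's policy, after which the constraint $\|w_{1:t}\|_2\le h\|x_{1:t}\|_2$ turns into a fixed-point inequality in $\|x_{1:T}\|_2$; the hypothesis $h<1/2$ is exactly what prevents this inequality from degenerating. Everything else is triangle-inequality bookkeeping, and the gap between the $1/(6M^2)$ the estimate actually produces and the stated $1/(9M^2)$ leaves comfortable room for the precise indexing convention of the cost sum.
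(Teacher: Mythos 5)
Your proof is correct and follows essentially the same route as the paper: the upper bound via the dead-beat controller $u_t=-B^{-1}Ax_t$ together with the self-bounding inequality in $\|x_{1:T}\|_2$ (valid for any admissible $w$ since the closed loop collapses to $x_{t+1}=z_t$), and the lower bound by solving the dynamics for $f_t$ and applying a power-mean inequality. The only cosmetic differences are that you instantiate $w\equiv 0$ in the lower bound, whereas the paper keeps the $\|w_t\|_2^2$ term and absorbs it using $2h<1\le M$ (which is exactly where its constant $9M^2$ comes from), and that you work with unsquared norms in the upper bound; both variants yield the stated constants.
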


\begin{proof}
We start by bounding $\|f_t\|^2_2$ using $(a+b+c+d)^2 \le 4(a^2 + b^2 + c^2 + d^2)$.
\begin{align*}
    \|f_t\|^2_2 &= \|x_{t+1}- A x_t - B u_t-w_t\|^2_2\\ &\le 4M^2 \|x_{t}\|^2_2+ 4\|x_{t+1}\|_2^2 +  4M^2\|u_t\|^2_2 + 4\|w_t\|^2_2
\end{align*}

Summing over $f^2_t$, we have 
\begin{align*}
    \|f_{0:T-1}\|^2_2 = \sum_{t=0}^{T-1}\|f_t\|^2_2 &\le 4\sum_{t=0}^{T-1} (M^2  \|x_{t}\|^2_2 + \|x_{t+1}\|_2^2 + M^2\|u_t\|^2_2 + \|w_t\|^2_2)\\
    &\le 8M^2 (\|x_{1:T}\|^2_2 + \|u_{1:T-1}\|^2_2) + 4 \|w_{1:t}\|^2_2 \\
    &\le (8M^2 + 4h^2) (\|x_{1:T}\|^2_2 + \|u_{1:T-1}\|^2_2) ~.
\end{align*}

The lower bound follows after applying $2h< 1 \le M$.

For the upper bound, consider $u_t = -B^{-1}A x_t$, which produces closed loop dynamics $x_{t+1} = w_t + f_t$ and hence $\|x_{t+1}\|^2_2 \le 2\|w_t\|^2_2 + 2 \|f_t\|^2_2$. Summing over $t$, we have 
\begin{align*}
    \|x_{1:T}\|^2_2 \le 2\|f_{0:T-1}\|^2_2 + 2\|w_{0:T-1}\|^2_2 \le 2\|f_{0:T-1}\|^2_2 + 2h^2\|x_{0:T-1}\|^2_2~.
\end{align*}
Noting that $x_0=0$, we have $\|x_{1:T}\|^2_2 \le \frac{2\|f_{0:T-1}\|^2_2}{(1-2h^2)} \le 4\|f_{0:T-1}\|^2_2$~.

Noting that $\|u_t\|_2 \le \frac{M}{L}\|x_t\|_2$, we have 
\begin{align*}
    \|x_{1:T}\|^2_2 + \|u_{1:T-1}\|^2_2 \le \frac{2M^2\|x_{1:T}\|^2_2}{L^2} \le \frac{8M^2\|f_{0:T-1}\|^2_2}{L^2}~.
\end{align*}
\end{proof}

\begin{remark}
Dependence on $M^2$ is required in Theorem~\ref{thm:opt_lb_abstract}.  Consider the system $x_{t+1} = M x_t + u_t + f_t$ with $x_1=1$, $u_t =0$ for all $t$ and $f_t$ alternates between $-M $ and $1$.  As a result, $x_t$ oscillates between $1$ and $0$ for an average cost of $\frac{1}{2}$, while $f^2_t$ is on average  $\frac{M^2 + 1}{2}$.
\end{remark}
\section{One Dimensional Analysis}\label{one_dimensiona_analysis:sec}
In this section,we show that for a simple one dimensional system constant robustness can be achieved using certainty equivalence control (i.e online least squares system identification) with tight dependence on the system uncertainty radius. We consider the following system
\begin{equation}\label{am1}  
x_{t+1}=ax_t+u_t+w_t+f_t,\quad 
|w_{0:t}|\le h|x_{0:t}|\quad\forall~t\in\mathbb Z_+,\quad x_0=0,
\end{equation}
where $w_t$ and $f_t$ are similarly defined as in Section \ref{sec:contribution}.

In this section, we use the notation $\gamma:~[-M,M]\to[1,\infty)$ for some function
(preferably, as small as possible), such that
the following inequalities hold
\begin{equation}\label{am2}  
|x_{:t}|\le\gamma(a)|f_{:t}|\quad\forall~t\in\mathbb Z_+ 
\end{equation}
for an upper bound on the $\ltgain$.

\subsection{Lower Bound}\label{1d_lb_proof:sec}
First, we formulate (and prove) a stronger version of the result of \cite{megretzki_l2_gain},
for the case $h=0$ (which means that $w\equiv0$ in (\ref{am1})).

\begin{theorem}\label{1d_lb:thm}
 If a control algorithm $\A$ has finite $\ell_2$-gain bounds specified by $\gamma: [-M, M] \to [0, \infty)$ with $h=0$, for all $a \in [-M, M]$ and $f_t,w_t,x_t\in\R$ satisfying \eqref{am1}, then 
$ \gamma(a) \ge \max\{|a-M|,|a+M|\}/8\ge M/8$ for all $a\in [-M, M]$, and therefore $\ltgain(\A)\ge M/8$.
\end{theorem}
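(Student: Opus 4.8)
The plan is to prove the per-$a$ bound by a two-probe adversarial construction that pits the controller's behaviour under the true system $a$ against its \emph{forced} behaviour under the far endpoint $\tilde a\in\{-M,+M\}$ chosen so that $|\tilde a-a|=\max\{|a-M|,|a+M|\}=M+|a|$ (recall $M\ge1$, so $|\tilde a-a|\ge1$). First I would record the normalizations that finiteness of $\gamma$ forces. Feeding the all-zero disturbance and using the gain bound $|x_{1:t}|\le\gamma(a)|f_{0:t-1}|$ at $t=1$ gives $0=x_1=u_0$, so the controller is anchored at the origin; and since $\A$ is a fixed (possibly nonlinear, history-dependent) deterministic map of the observed states, its first nonzero action is a single scalar $c:=\A(1)$, the control played upon observing $x_1=1$, the same for every unknown $a$.

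The first probe is the unit impulse $f_0=1$, $f_t=0$ for $t\ge1$. Under any true system $a'$ this yields $x_1=1$, $u_1=c$, $x_2=a'+c$, whence $|x_{1:2}|\ge|a'+c|$ and $|f_{0:1}|=1$, so $\gamma(a')\ge|a'+c|$ for every $a'\in[-M,M]$; in particular $\gamma(a)\ge|a+c|$. The second probe manufactures an overshoot: play $f_0=1$ and then $f_1=\tilde a-a$, which steers the true $a$-state at time $2$ to $x_2=\tilde a+c$, exactly the value the impulse response of system $\tilde a$ reaches. On this history the controller plays $c_2:=\A(1,\tilde a+c)$, and finiteness of $\gamma(\tilde a)$ forces $c_2$ to nearly cancel $\tilde a$ (else the $\tilde a$-impulse response blows up): $|\tilde a(\tilde a+c)+c_2|\le\gamma(\tilde a)$. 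Substituting this control into the true $a$-dynamics gives $x_3=a(\tilde a+c)+c_2=(a-\tilde a)(\tilde a+c)+\eta$ with $|\eta|\le\gamma(\tilde a)$, so the state overshoots to magnitude $\approx|\tilde a-a|\,|\tilde a+c|$ while the injected energy is only $|f_{0:2}|=\sqrt{1+(\tilde a-a)^2}\le\sqrt2\,|\tilde a-a|$. Dividing, $\gamma(a)\gtrsim|\tilde a+c|$, up to the correction $\eta$.

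Combining the two probes is then immediate: since $|a+c|+|\tilde a+c|\ge|\tilde a-a|=M+|a|$, at least one lower bound is $\ge(M+|a|)/2$, and carrying the constants through (the $\sqrt2$ from the disturbance norm, the factor from the $\max$, and slack for the correction) yields $\gamma(a)\ge(M+|a|)/8\ge M/8$ for every $a$; taking the supremum over $a$ gives $\ltgain(\A)\ge M/8$.

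The step I expect to be the real obstacle is making the overshoot bound clean, i.e. controlling the correction $\eta$, equivalently ruling out a controller that \emph{delays} its reaction. If $\gamma(\tilde a)$ is large the controller need not nearly cancel $\tilde a$ after one step; it may instead ``ring'', pushing the overshoot into the $\tilde a$-world rather than the true $a$-world, so that the crude two-step bound only certifies that $\max\{\gamma(a),\gamma(\tilde a)\}$ is large rather than $\gamma(a)$ itself. Resolving this requires a multi-step version of the second probe: the adversary keeps steering the true trajectory along system $\tilde a$'s impulse response $\tilde x_t$ and triggers the overshoot at an \emph{early} time $k$, keeping the accumulated exploration energy $\sum_{t<k}(\tilde a-a)^2\tilde x_t^2$ of order $(\tilde a-a)^2$, together with a short case analysis that charges the residual transient to $\gamma(\tilde a)$. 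This is precisely where the constant $8$ (rather than $2$) is spent, and I would organize it as the technical core of the argument.
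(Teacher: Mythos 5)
Your approach is correct in substance but genuinely different from the paper's. The paper runs a normalized game on the least-squares statistics: $\beta_t$ (the running estimate of $a$) and $\theta_t$ (the normalized residual). Its adversary strategy (\ref{am14})--(\ref{am16}) drives $\theta_t\to0$ while parking $\beta_t$ in the window $[a_0+(4+\mu)\gamma_0,\,a_0+(8+3\mu)\gamma_0]$; a trajectory exactly consistent with parameter $\beta_\tau$ and vanishing residual contradicts \emph{any} finite $\gamma(\beta_\tau)$, so the window must exit $[-M,M]$, which is where the factor $8$ arises. You instead fix the phantom parameter at the far endpoint $\tilde a$ in advance and replay its closed-loop impulse response; this is more elementary, while the paper's route yields the extra structural fact that the apparent parameter can be forced anywhere between roughly $4\gamma_0$ and $8\gamma_0$ from the truth. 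You have correctly isolated the one real obstacle (a controller with huge $\gamma(\tilde a)$ need not cancel $\tilde a$ promptly) and the right shape of the fix: a stopping-time dichotomy in which the failure branch is charged to the \emph{mere finiteness} of $\gamma(\tilde a)$, with no upper bound on it needed. One claim in your sketch is wrong as stated, however: the exploration energy $\sum_{t<k}(\tilde a-a)^2\tilde x_t^2$ is \emph{not} kept of order $(\tilde a-a)^2$ --- if the controller ``rings'', $\tilde x_t$ and hence the injected energy can grow without a priori bound before the stopping time. What saves the argument is that the overshoot grows proportionally. Concretely, with $D:=|\tilde a-a|$ (assume $D>8$, else the unit impulse already gives $\gamma(a)\ge1\ge D/8$) and $S_j:=\sum_{s\le j}\tilde x_s^2$, either $|a+c|\ge D/2$ and probe one finishes, or $|\tilde x_2|=|\tilde a+c|\ge D/2$ and the invariant $\tilde x_k^2\ge\frac{D^2}{8}S_{k-1}$ propagates through every ``ringing'' step ($|\tilde x_{k+1}|>D|\tilde x_k|/2$); ringing forever forces $S_k\to\infty$, contradicting $S_k\le\gamma(\tilde a)^2$, so at the first non-ringing $k^*$ one gets
\begin{equation*}
\gamma(a)^2\ \ge\ \frac{\bigl(D|\tilde x_{k^*}|-|\tilde x_{k^*+1}|\bigr)^2}{1+D^2S_{k^*-1}}\ \ge\ \frac{D^2\tilde x_{k^*}^2/4}{2D^2S_{k^*-1}}\ \ge\ \frac{D^2}{64},
\end{equation*}
recovering exactly the constant $1/8$. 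With that repair your argument closes, so I would classify this as a valid alternative proof rather than a gap, but the technical core as you stated it needs this correction before it is a proof.
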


The result of Thm.~\ref{1d_lb:thm} suggests that the closed loop
$\ltgain$, no matter which adaptive controller is used, if it obtains finite $\ltgain$ for all systems satisfying the parametric uncertainty, must grow 
linearly with the parametric uncertainty size for all values of 
the uncertain parameter.
In particular, this makes it impossible to sacrifice performance at 
some values of $a$ to gain much improvement at other values of $a$.
Given signals $x,f$ satisfying equations \eqref{am1} with $w_t\equiv0$, define
$\xi_t,p_t,q_t,r_t$ by
\[ \xi_t=x_{t+1}-u_t,\qquad p_t=\sum_{s=0}^{t-1}x_s^2,\qquad 
q_t=\sum_{s=0}^{t-1}x_s(x_{s+1}-u_s),\qquad r_t=\sum_{s=0}^{t-1}\xi_s^2.\]
The interaction between control ($u_t$) and noise ($f_t$) can
be viewed as a game (between $u$ and $f$), 
in which $u_t$ is decided, based on knowing all $x_s$
with $s\le t$ (and, of course, all $u_s$ with $s<t$), to keep the inequality 
\begin{equation}\label{am11}   
\gamma(a)^2\left[p_ta^2-2q_ta+r_t\right]\ge p_t
\end{equation}
satisfied for all $a\in[-M,M]$ and all $t\in\mathbb Z_+$, while $f_t$ is decided based on
knowing all $u_s$ and $x_s$ with $s\le t$ (and all $f_s$ with $s<t$), 
in an effort to violate the inequality 
from (\ref{am11}) at some time $t\in\mathbb Z_+$ and some $a\in[-M,M]$. 

\vskip4mm\noindent
In this proof, we work with the {\sl normalized} versions 
$\delta_t$, $\nu_t$, $z_t$ of $u_t$, $\xi_t$, $x_t$, as well as additional signals
$\beta_t$, $\theta_t$. Let $t_0$ be the smallest $t\in\mathbb Z_+$ such that $p_t>0$.
For $t\ge t_0$ let
\[ \beta_t=\frac{q_t}{p_t},\qquad\theta_t=\frac{r_tp_t-q_t^2}{p_t^2},\qquad
z_t=\frac{x_t}{\sqrt{p_t}},\qquad\delta_t=\frac{u_t+\beta_tx_t}{\sqrt{p_t+x_t^2}},\qquad
\nu_t=\frac{\xi_t-\beta_tx_t}{\sqrt{p_t+x_t^2}}.\]
Since $x_t,\xi_t,u_t,p_t,q_t,r_t$ satisfy equations
\[  x_{t+1}=\xi_t+u_t,~p_{t+1}=p_t+x_t^2,~q_{t+1}=q_t+x_t\xi_t,~r_{t+1}=r_t+\xi_t^2,\]
$\delta_t$, $\nu_t$, $z_t$, $\beta_t$, $\theta_t$, for $t\ge t_0$, satisfy
\begin{equation}\label{am12}
z_{t+1}=\nu_t+\delta_t,\qquad
\beta_{t+1}=\beta_t+\frac{z_t\nu_t}{\sqrt{1+z_t^2}},\qquad
\theta_{t+1}=\frac{\theta_t+\nu_t^2}{1+z_t^2}.
\end{equation}
The game between $u$ and $f$ can now be interpreted as the game between
$\delta$ and $\nu$, defined by the dynamical equations (\ref{am12}) with the state
$y_t=(z_t,\beta_t,\theta_t)$,
where the normalized control effort
$\delta_t$, for $t\ge t_0$, is best constructed as a function $\delta_t=S_t(y_t)$
of the current state, to keep the inequality
\begin{equation}\label{am13}   
\gamma(a)^2\left[(a-\beta_t)^2+\theta_t\right]\ge1
\end{equation}
satisfied for all $a\in[-M,M]$ and all $t\ge t_0$, while $\nu_t$,  for $t\ge t_0$, 
is best constructed as a function $\nu_t=D_t(y_t,\delta_t)$ 
of current normalized control effort and the current state,
in an effort to violate the inequality 
from (\ref{am13}) at some time $t\ge t_0$ and some $a\in[-M,M]$. 

\paragraph{Claim 1:} {\sl If a control algorithm $\delta_t=S_t(y_t)$ maintains (\ref{am13})
for $a=a_0$ and for all $t\ge t_0$ then it also satisfies
\begin{equation}\label{am17}   
\gamma(a_0)^2\left[(a_0-\beta_t)^2+\theta_t\right]\ge1+z_t^2
\end{equation}
for all $t\ge t_0$.} Indeed, with $\nu_t=\frac{(a_0-\beta_t)z_t}{\sqrt{1+z_t^2}}$ we have
\[
\gamma(a_0)^2\left[(a_0-\beta_{t+1})^2+\theta_{t+1}\right]=
\gamma(a_0)^2\left[\left|a_0-\beta_{t}-\frac{z_t\nu_t}{\sqrt{1+z_t^2}}\right|^2+
\frac{\theta_t+\nu_t^2}{1+z_t^2}\right]
=\gamma(a_0)^2\frac{(a_0-\beta_{t})^2+\theta_{t}}{1+z_t^2},
\]
hence (\ref{am17}) must be satisfied to maintain (\ref{am13}).

\paragraph{Claim 2:} {\sl If a control algorithm $\delta_t=S_t(y_t)$ 
maintains (\ref{am17})
for all $t\ge t_0$ then it also satisfies
\begin{equation}\label{am18}   
\gamma(a_0)^2\left[\frac{\theta_t}{1+z_t^2}+\frac{(a_0-\beta_t)^2}{z_t^2}\right]-1\ge
\left|\delta_t+\frac{(a_0-\beta_t)\sqrt{1+z_t^2}}{z_t}\right|^2
\end{equation}
whenever $z_t\neq0$, for all $t\ge t_0$.}  Indeed, when $z_t\neq0$, with 
$\nu_t=\frac{(a_0-\beta_t)\sqrt{1+z_t^2}}{z_t}$ we have
\[
\gamma(a_0)^2\left[(a_0-\beta_{t+1})^2+\theta_{t+1}\right]-1-z_{t+1}^2
=\gamma(a_0)^2\left[\frac{\theta_t}{1+z_t^2}+\frac{(a_0-\beta_t)^2}{z_t^2}\right]-1-
\left|\delta_t+\frac{(a_0-\beta_t)\sqrt{1+z_t^2}}{z_t}\right|^2,
\]
hence (\ref{am18}) must be satisfied to maintain (\ref{am17}).

\vskip4mm\noindent
To continue the proof,
take any point $a_0\in[-M,M]$, 
and let $\gamma_0=\gamma(a_0)\ge1$.
We aim to show that, for a sufficiently small $\mu>0$,
\begin{itemize}
\item[(A)] an appropriate adversary strategy $\nu_t=D_t(y_t,\delta_t)$, 
assures that $\theta_t\to0$,
while $\beta_t$ stays (for sufficiently large $t$) 
within the interval $[a_0+(4+\mu)\gamma_0,a_0+(8+3\mu)\gamma_0]$,
no matter which normalized control algorithm $\delta_t=S_t(y_t)$ is used. 
\item[(B)]  an appropriate adversary strategy $\nu_t=D_t(y_t,\delta_t)$, 
assures that $\theta_t\to0$,
while $\beta_t$ stays (for sufficiently large $t$) 
within the interval $[a_0-(8+3\mu)\gamma_0,a_0-(4+\mu)\gamma_0]$,
no matter which normalized control algorithm $\delta_t=S_t(y_t)$ is used.
\end{itemize}
Combining assertions (A) and (B) (and making $\mu>0$ sufficiently small) 
assures that, as long as $\gamma(a)$ is finite for
all $a\in[-M,M]$, the value of $8\gamma(a)$ cannot be 
smaller than the distance from $a$ to
either end of the $[-M,M]$ interval, implying the 
desired lower bound for $\gamma(a)$. 

\vskip4mm\noindent
We are only presenting
the strategy for selecting $\nu_t$ in (A), as the construction for (B) is symmetric.
Let $I_0$ denote the interval $[a_0+(4+\mu)\gamma_0,a_0+(8+3\mu)\gamma_0]$.
For $t\ge t_0$, define $\nu_t$ by
\begin{eqnarray}
\nu_t =& 0,&\text{when}~|\delta_t|\ge2\gamma_0,~\beta_t\in I_0,\label{am14}\\
\nu_t =& -\mu\gamma_0\text{sign}[z_t(\beta_t-a_0-(6+2\mu)\gamma_0)],
& \text{when}~|\delta_t|\ge(2+\mu)\gamma_0,~\beta_t\not\in I_0,\label{am15}\\
\nu_t =& -(4+\mu)\gamma_0\text{sign}[z_t(\beta_t-a_0-(6+2\mu)\gamma_0)],& \text{otherwise,}\label{am16}
\end{eqnarray}
where the \enquote{sign} function is defined by 
$\text{sign}(x)=1$ for $x\ge0$, $\text{sign}(x)=-1$ for $x<0$, 
i.e., only takes values $1$ or $-1$. Intuitively, the adversary strategy 
(\ref{am14})-(\ref{am16}) pursues the following three objectives:
\begin{itemize}
\item[(a)] keep $|z_t|\ge2\gamma_0$ at all times;
\item[(b)] force $\beta_t\in I_0$, eventually;
\item[(c)] make $\nu_t=0$ when objectives (a) and (b) are satisfied.
\end{itemize}

\paragraph{Claim 3:} {\sl Subject to (\ref{am12}) and (\ref{am14})-(\ref{am16}), 
condition
$|z_{t}|\ge2\gamma_0$ will be satisfied  for all $t>t_0$.}   Indeed, 
\begin{description}
\item[\quad(3a):] in (\ref{am14}),
$|\delta_t|\ge2\gamma_0$ and $\nu_t=0$, hence 
$|z_{t+1}|=|\delta_t+\nu_t|=|\delta_t|\ge2\gamma_0$.
\item[\quad(3b):] in (\ref{am15}), $|\delta_t|\ge(2+\mu)\gamma_0$ and 
$|\nu_t|=\mu\gamma_0$, hence 
$|z_{t+1}|=|\delta_t+\nu_t|\ge|\delta_t|-|\nu_t|\ge2\gamma_0$.
\item[\quad(3c):] in (\ref{am16}), $|\delta_t|\le(2+\mu)\gamma_0$ and
$|\nu_t|=(4+\mu)\gamma_0$, hence
$|z_{t+1}|=|\delta_t+\nu_t|\ge|\delta_t|-|\nu_t|\ge2\gamma_0$.
\end{description}

\paragraph{Claim 4:} {\sl Subject to (\ref{am12}), (\ref{am14})-(\ref{am16}), and
assuming $\mu\in(0,4)$,
there exists $t_1>t_0$ such that $\theta_t<4+3\mu$ for all $t>t_1$.}
Indeed, since $|\nu_t|\le(4+\mu)\gamma_0$ and
$|z_{t}|\ge2\gamma_0$ for all $t>t_0$, we have
\[  \theta_{t+1}-\frac{(4+\mu)^2}4
=\frac{\theta_t+\nu_t^2}{1+z_t^2}-\frac{(4+\mu)^2}4
\le\frac{\theta_t+(4+\mu)^2\gamma_0^2}{1+4\gamma_0^2}-\frac{(4+\mu)^2}4
=\frac1{1+4\gamma_0^2}\left(\theta_{t}-\frac{(4+\mu)^2}4\right)\]
for all $t>t_0$,
which leads to the conclusion, since $4+3\mu>(4+\mu)^2/4$ for all $\mu\in(0,4)$.

\paragraph{Claim 5:} {\sl If $0<\mu<\gamma_0^{-2}/3$, and
$\beta_t\in I_0$ for some $t>t_1$ then 
$|\delta_t|\ge2\gamma_0$ (and therefore $\nu_t=0$, $\beta_{t+1}=\beta_t$).}
Indeed, combining $|z_t|\ge2\gamma_0$ with $\theta_t<4+3\mu$ shows that
\[  \frac{\gamma_0^2\theta_t}{1+z_t^2}-1
\le\frac{\gamma_0^2(4+3\mu)}{1+4\gamma_0^2}-1
=\frac{3\mu\gamma_0^2-1}{1+4\gamma_0^2}<0,\]
hence, using (\ref{am18}),
\begin{eqnarray*}|\delta_t|&\ge&\left|\frac{(\beta_t-a_0)\sqrt{1+z_t^2}}{z_t}\right|
-\left|\delta_t-\frac{(\beta_t-a_0)\sqrt{1+z_t^2}}{z_t}\right|\\
&\ge&\frac{|\beta_t-a_0|\sqrt{1+z_t^2}}{|z_t|}-\frac{|\beta_t-a_0|\gamma_0}{|z_t|}
=|\beta_t-a_0|\frac{\sqrt{1+z_t^2}-\gamma_0}{|z_t|}.
\end{eqnarray*}
Since, for $\gamma_0\ge1$, the function $\phi:~(0,\infty)\to\mathbb R$ defined by
$\phi(z)=\frac{\sqrt{1+z^2}-\gamma_0}{z}$ has positive derivative
\[  \dot\phi(z)=\frac1{z^2}\left(\gamma_0-\frac1{\sqrt{1+z^2}}\right)>0,\]
it is monotonically increasing
on $(0,\infty)$, which transofrms the lower bound for $|\delta_t|$ into
\[  |\delta_t|\ge |\beta_t-a_0|\frac{\sqrt{1+z_t^2}-\gamma_0}{|z_t|}\ge
(4+\mu)\gamma_0\frac{\sqrt{1+4\gamma_0^2}-\gamma_0}{2\gamma_0}>
(2+\mu/2)\gamma_0>2\gamma_0.\]

\vskip4mm\noindent
Claim  5 establishes that, once $\beta_\tau\in I_0$ for some $\tau>t_1$, the equalities
$\beta_t=\beta_\tau\in I_0$ and $\nu_t=0$ will hold for all $t\ge\tau$, which will
guarantee that 
$\theta_{t+1}=\theta_t/(1+z_t^2)\le\theta_t/(1+4\gamma_0^2)\le\theta_t/5$ will converge
to zero as $t\to+\infty$, thus preventing (\ref{am13}) with $a=\beta_\tau$
from being satisfied
when $t$ is large enough, no matter how large $\gamma(\beta_\tau)$ is.

\vskip4mm\noindent
To finish the proof, we need to show that condition $\beta_t\in I_0$ will be satisfied for some
$t>t_1$. Indeed, with $\beta_t\not\in I_0$, the value of $\nu_t$ will be defined by
either (\ref{am15}) or (\ref{am16}). Taking into accout that, for $|z_t|\ge2\gamma_0\ge2$, 
the value of $|z_t|/\sqrt{1+z_t^2}$ is between 0.5 and 1,
\begin{description}
\item[\quad Case 1:] according to (\ref{am15}) and (\ref{am12}), $\beta_{t+1}$ 
results from moving $\beta_t$, which is at least $(2+\mu)\gamma_0$ away from the center
$c_0=a_0+(6+2\mu)\gamma_0$ of $I_0$, by a distance between $0.5\mu\gamma_0$ and 
$\mu\gamma_0$ in the direction \enquote{to the center}, which ensures that
$|\beta_{t+1}-c_0|\le|\beta_t-c_0|-\mu/2$.
\item[\quad Case 2:] according to (\ref{am16}) and (\ref{am12}), $\beta_{t+1}$ 
results from moving $\beta_t$, which is at least $(2+\mu)\gamma_0$ away from
$c_0$, by a distance between $(2+\mu/2)\gamma_0$ and 
$(4+\mu)\gamma_0$ in the direction \enquote{to the center}, which ensures that
$|\beta_{t+1}-c_0|\le|\beta_t-c_0|-\mu$.
\end{description}
Therefore, as long as $t>t_1$ and $\beta_{t}\not\in I_0$, the value of $|\beta_t-c_0|$
decreases by at least $\mu/2$ at each step, thus guaranteeing that condition
$\beta_t\in I_0$ will be satisfied for some $t>t_1$.

\subsection{Upper Bound}\label{cert_equiv_proof:sec}

In this section, we show that, for $h<1/2$, 
the so-called {\sl certainty equivalence controller}
achieves an upper bound for the closed loop $\ell_2$ gain
which grows linearly with $M$. This
is remarkable, as a large $\ltgain$ is usually associated with low robustness
to uncertain dynamical feedback, with an expectation that a system with $\ltgain$ 
$\gamma$ can be destabilized by a feedback with 
$\ltgain$ of $1/\gamma$ (which is
certainly true for {\sl linear time invariant} systems). Since, with $h=0$, $\ltgain$
of the closed loop is shown to be at least $M/8$, one would expect that 
some uncertainty
$\mathbf\Delta$ of $\ltgain$ $h=8M^{-1}$ would destabilize the
system, but this is evidently not happening: 
according to Thm.~\ref{cert_equiv:thm} below, all uncertainty $\ltgain$s below $h=0.5$ are
well tolerated by the certainty equivalence controller (Alg.~\ref{alg:cert_equiv}).

\begin{algorithm2e}[h]

\label{alg:cert_equiv}
\KwIn{Time horizon $T$, system upper bound parameter $M$.}
Initialize $x_0 , u_0 =0$\\
\For{$t=1\dots T$}{
Observe $x_t$ and define $\tilde{z}_{t-1}(\hat{a}) = x_t -\hat{a} x_{t-1} - u_{t-1}$.\\
Compute $\bar{a}_t = \argmin_{\hat{a}} \sum_{s=0}^{t-1} \tilde{z}^2_s(\hat{a})$\\
Compute $\hat{a}_t = \text{clip}_{[-M, M]}(\bar{a}_t)$\\
Execute $u_t = -\hat{a}_t x_t$.
}
\caption{Certainty Equivalence Control}
\end{algorithm2e}

\vskip4mm\noindent

\begin{theorem}
\label{cert_equiv:thm}
 For $M\ge1/4$, with a system satisfying condition \eqref{am1} with $h<1/2$, Alg.~\ref{alg:cert_equiv} has
 \begin{align*}
     \ltgain(\A) \le \frac{(64M^2-8h)^{1/2}}{(1-2h)^{3/2}}~.
 \end{align*}
\end{theorem}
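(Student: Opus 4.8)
The plan is to reduce the $\ltgain$ bound to a \emph{self-normalized} energy estimate for the closed loop and then convert the disturbance energy $\|z\|_2$ into $\|f\|_2$ using the robustness constraint. First I would write out the closed loop: since $u_t=-\hat a_t x_t$ and $x_{t+1}=ax_t+u_t+z_t$ with $z_t=w_t+f_t$, we obtain $x_{t+1}=e_t x_t+z_t$ where $e_t=a-\hat a_t$ is the (clipped) estimation error. Everything hinges on controlling $e_t$. Note also that $x_0=u_0=0$ and Assumption~\ref{assumption:noise_bound} force $w_0=0$, so $x_1=f_0$; this clean identity for the first state will matter below.

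Next I would bound $e_t$. Setting the derivative of $\sum_{s<t}((a-\hat a)x_s+z_s)^2$ to zero gives the unclipped least-squares error $a-\bar a_t=-\left(\sum_{s<t}z_s x_s\right)/\left(\sum_{s<t}x_s^2\right)$. Since $a\in[-M,M]$ and $\mathrm{clip}_{[-M,M]}$ is the Euclidean projection onto a convex set containing $a$, clipping is nonexpansive toward $a$, so $|e_t|=|a-\hat a_t|\le|a-\bar a_t|$. Cauchy--Schwarz then yields the self-normalized bound $|e_t|\le\sqrt{Z_t/P_t}$, where $P_t=\sum_{s<t}x_s^2$ and $Z_t=\sum_{s<t}z_s^2$. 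The degenerate first step, where $P_t=0$, has to be handled separately: there the estimate is uninformative and only $|e_1|\le 2M$ is available from the clip. This single step is the sole source of the $M$ dependence and, together with $x_1=f_0$, will produce the leading $64M^2$ constant while contributing a term proportional to $f_0^2\le\|f_{0:T-1}\|_2^2$.

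The crux is the energy inequality $\|x_{1:T}\|_2^2\le C^2\|z_{0:T-1}\|_2^2$ for the closed loop driven by an \emph{arbitrary} disturbance sequence $z$ (we may temporarily forget that $z=w+f$). Summing $x_{t+1}^2\le(1+\rho)e_t^2 x_t^2+(1+\rho^{-1})z_t^2$ and inserting $e_t^2 x_t^2\le Z_t x_t^2/P_t$ reduces the task to controlling $\sum_t Z_t x_t^2/P_t$, and this is exactly where the difficulty lies: the crude bound $Z_t\le Z_T$ leaves $\sum_t x_t^2/P_t=\sum_t(P_{t+1}-P_t)/P_t$, which is \emph{not} bounded independently of $T$. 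The resolution must exploit the feedback, since whenever the states grow geometrically (so that $\sum_t x_t^2/P_t$ is large) the self-normalized bound forces the accumulated disturbance $Z_t$ to be comparably large, keeping the product proportional to $\|z\|_2^2$. I expect to make this rigorous by an induction on a running bound of the form $P_{t+1}\le C^2 Z_t$ (equivalently, a dissipation certificate $\sigma_t\ge 0$ satisfying $\sigma_{t+1}-\sigma_t\le\gamma^2 z_t^2-x_{t+1}^2$), seeded by the separately-handled first step. This inductive energy bound, rather than any single inequality, is the main obstacle.

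Finally I would convert to $f$ via Assumption~\ref{assumption:noise_bound}, writing $\|z_{0:T-1}\|_2\le\|w_{0:T-1}\|_2+\|f_{0:T-1}\|_2\le h\|x_{1:T}\|_2+\|f_{0:T-1}\|_2$ (using $x_0=0$). Substituting into the energy inequality and solving the resulting scalar inequality in $\|x_{1:T}\|_2$ produces an $\ltgain$ bound of the form $C/(1-Ch)$. To obtain the stated $M$-independent denominator $(1-2h)^{3/2}$ rather than $(1-Ch)$, the key is that the large first-step factor and the robustness must decouple: the $O(M^2)$ amplification acts only on the directly-bounded term $f_0^2$, whereas the robustness constraint interacts solely with the $O(1)$-gain stable tail, where absorbing each disturbance term against the state history injects a factor $(1-2h)^{-1}$. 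Careful bookkeeping of these factors, together with the first-step constant, is what I expect to yield $\ltgain(\A)\le(64M^2-8h)^{1/2}/(1-2h)^{3/2}$.
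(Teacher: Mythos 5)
Your outline correctly identifies the two load-bearing ingredients that the paper also uses: the self-normalized least-squares bound $(a-\hat a_t)^2\le V_t/X_t$ (with clipping non-expansiveness and the fallback $|a-\hat a_t|\le 2M$), and the need for an inductive dissipation certificate rather than a crude summation. However, there is a genuine gap: the certificate itself, which you correctly flag as ``the main obstacle,'' is never constructed. The paper's proof spends essentially all of its effort exactly there --- it needs \emph{two} invariants, $D_t = H_t-2V_t-(8M^2-1)X_t\le 0$ and $C_t=\rho Z_t-2H_t-X_t+4V_t\ge 0$ with $\rho=\frac{64M^2-4}{1-2h}$, where the second is verified by a three-way case split on the size of $X_t$ (including a concavity argument on the middle range), and the final gain bound comes from combining them with yet another dichotomy. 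Proposing ``an induction on a running bound of the form $P_{t+1}\le C^2 Z_t$'' names the shape of the answer but does not supply it, and it is not evident that a single one-parameter invariant of that form closes.

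The second, more substantive problem is your mechanism for the $h$-dependence. Converting $\|z_{0:T-1}\|_2\le h\|x_{1:T}\|_2+\|f_{0:T-1}\|_2$ by triangle inequality and solving the scalar inequality yields a denominator $1-Ch$ with $C$ the gain from $z$ to $x$, which scales with $M$; that only tolerates $h=O(1/M)$, not the claimed $h<1/2$ uniformly in $M$. Your proposed fix --- that the $O(M^2)$ amplification ``acts only on $f_0^2$'' --- is not accurate: in the recursion $H_{t+1}\le\min\{8M^2,\,2V_t/X_t\}H_t+2v_t^2$ the $8M^2$ branch is active at \emph{any} time when the accumulated disturbance energy is large relative to the state energy, and the $M^2$ enters the final bound through the global constant $\rho$ in the certificate, not through the first step alone. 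The paper avoids the triangle inequality entirely: it defines $Z_t=\max_{\tau\le t}\{V_\tau-hX_\tau/2\}$, i.e., it subtracts the state-energy correction \emph{inside} the certificate variable, and then uses the positive semidefinite quadratic form $\frac{1}{2h}w^2+\frac{1}{1-2h}f^2-(f+w)^2\ge 0$ together with $W_t\le h^2X_t$ to conclude $Z_t\le\frac{1}{1-2h}R_t$. This $\ell_2$-level (Young-type) absorption of $w$ against $X_t$ is what produces the $M$-independent $(1-2h)^{-3}$ factor, and it is the piece your plan is missing.
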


We provide an intuitive explanation why certainty equivalence can provide an $\ltgain$ bound.  If the algorithm estimates $\hat{a}$ inaccurately, strong convexity of the one dimensional least squares objective implies that the magnitude of the disturbances is a nontrivial fraction of the magnitude of the states up to that point.  On the other hand, if $\hat{a}_{t+1}$ is an accurate estimate of $a$, we can bound $\|x_{1:t}\|^2_2$ using the stability of the closed loop dynamics.  An $\ltgain$ bound follows from stitching these regimes together.  While we would like to extend these ideas to high dimensions, we note that the least squares objective is no longer strongly convex in such a setting.  In particular, $\|A-\hat{A}_t\|_2$ can be large in a direction where disturbances are small relative to the magnitude of the state.   A more technical approach that yields the tighter bound  of Thm.~\ref{cert_equiv:thm} can be found in App.~\ref{cert_equiv_proof:sec}.
We restate Alg.~\ref{alg:cert_equiv} in closed form:
\begin{equation}\label{am3}
\tilde a_t=\text{clip}_{[-M, M]}(\hat a_t),\quad
\hat a_t=\begin{cases}Q_t/Y_t,& Y_t\neq0,\\ 0,& X_t=0,\end{cases}\qquad
X_t=\sum_{s=0}^{t-1}x_s^2,\quad Q_t=\sum_{s=0}^{t-1}(x_{s+1}-u_s)x_s,
\end{equation}

Note that $\hat a_t$ is the argument of minimum (with respect to $a\in\mathbb R$)
of the \enquote{equation error} functional
\begin{equation}\label{am5} 
V_t(a)=\sum_{s=0}^{t-1}|x_{t+1}-ax_t-u_t|^2=\sum_{s=0}^{t-1}|w_s+f_s|^2,
\end{equation}
computed with the data available to the controller at time $t$.

Let $f,w,x\in\ell$ be some signals
satisfying (\ref{am1}), where, $M\ge1/4$,  and $h<1/2$.
Let $v=f+w$.
In addition to variables $x_t$, $Q_t$ defined in (\ref{am3}), define $V_t=V_t(a)$
as in (\ref{am5}), and let
\[  H_t=x_t^2,\qquad Z_t=Z_t(a)=\max_{\tau\le t}\left\{V_\tau(a)-hx_\tau/2\right\},\qquad
R_t=\sum_{s=0}^{t-1}f_s^2,\qquad W_t=\sum_{s=0}^{t-1}w_s^2.\]
In contrast with $x_t$, $Q_t$, and $H_t$, the values $V_t=V_t(a)$ and $Z_t=Z_t(a)$
are not available to the controller, as they depend not only on the past observations
$x_{0:t}$, but on the unknown parameter $a\in[-M,M]$, and  
variables $R_t$ and $W_t$
depend not only on the past observations
$x_{0:t}$ and  $a\in[-M,M]$, but also on the unknown dynamics of 
$\mathbf\Delta$. 

\paragraph{Step 1:} Show that the relation between $H_t$,
$x_t$, $V_t$, $Z_t$, $v_t$, and
$H_{t+1}$, $x_{t+1}$, $V_{t+1}$, $Z_{t+1}$ can be described, for all $t\in\mathbb Z_+$, by
\begin{eqnarray}\label{am6}
H_{t+1}&\le & \min\left\{8M^2,2V_t/x_t\right\}H_t+2v_t^2,\quad H_0=0,\label{am6a}\\
x_{t+1}&=&x_t+H_t,\quad x_0=0,\label{am6b}\\
V_{t+1}&=&V_t+v_t^2,\quad V_0=0,\label{am6c}\\
Z_{t+1}&=&\max\left\{Z_t,~V_{t+1}-hx_{t+1}/2\right\},\quad Z_0=0.\label{am6d}
\end{eqnarray}
Indeed, equations (\ref{am6b})-(\ref{am6d}) are evident. 
To prove (\ref{am6a}), note first that,
by the definition of $\hat a_t$,
\[  V_t(a)=(a-\hat a_t)^2x_t+\min_{b\in\mathbb R}V_t(b)\qquad\text{for all}
\quad a\in\mathbb R,\]
hence $V_t\ge(a-\hat a_t)^2x_t$. Since $|a-\text{clip}_{-M,M]}(b)|\le|a-b|$ for every 
$a\in[-M,M]$ and $b\in\mathbb R$, applying this to $b=\hat a$
yields $V_t\ge(a-\tilde a_t)^2x_t$. Also, since both $a$ and 
$\tilde a_t$ are in $[-M,M]$,
we have $|a-\tilde a_t|\le2M$. Since the quadratic form 
$\sigma(p,q)=2p^2+2q^2-(p+q)^2$ is positive semidefinite, we have
\[  x_{t+1}^2=[(a-\tilde a_t)x_t+v_t]^2\le2(a-\tilde a_t)^2x_t^2+2v_t^2
\le2\min\left\{4M^2, V_t/x_t\right\}x_t^2+2v_t^2.\]

\paragraph{Step 2:} Show that 
\begin{equation}\label{am4}
Z_t\le\frac1{1-2h}R_t\qquad\text{for all}\qquad t\in\mathbb Z_+.
\end{equation}
Indeed, for $h=0$ we have $w_t\equiv0$, hence $V_t=R_t$, and the
inequality (\ref{am4}) holds. For $h\in(0,1/2)$, the quadratic form
\[   \sigma(f,w)=\frac1{2h}w^2+\frac1{1-2h}f^2-(f+w)^2\]
is positive semidefinite. Hence, realizing that the L2 gain bound 
$|w_{0:t}|\le h|x_{0:t}|$ means $h^{-2}W_{t+1}\le x_{t+1}$,
we get
\[  V_t-0.5hx_t\le V_t-\frac1{2h}W_t
=\sum_{s=0}^{t-1}\left\{(f_s+w_s)^2-\frac1{2h}w_s^2\right\}\le
\frac1{1-2h}\sum_{s=0}^{t-1}f_s^2=\frac1{1-2h}R_t.\]
Finally, since $R_t$ is monotonically non-decreasing as $t$ increases,
\[ Z_t=\max_{\tau\le t}\left\{V_\tau-hx_\tau/2\right\}\le\max_{\tau\le t}\frac1{1-2h}R_\tau
=\frac1{1-2h}R_t.\]

\paragraph{Step 3:} Use (\ref{am6a})-(\ref{am6c}) to show that
\begin{equation}\label{am7}
D_t\stackrel{\text{def}}{=}H_t-2V_t-(8M^2-1)x_t\le0\qquad
\text{for all}\quad t\in\mathbb Z_+.
\end{equation}
Indeed, $D_0=H_0-2V_0-(8M^2-1)x_0=0\le0$, and bounding $D_{t+1}$ in terms of $D_t$
yields
\[ H_{t+1}-2V_{t+1}-(8M^2-1)x_{t+1}\le
8M^2H_t+2v_t^2-2(V_t+v_t^2)-(8M^2-1)(x_t+H_t)=H_t-2V_t-(8M^2-1)x_t.\]

\paragraph{Step 4:} Use (\ref{am6a})-(\ref{am6d}) to show that
\begin{equation}\label{am8}
C_t\stackrel{\text{def}}{=}\rho Z_t-2H_t-x_t+4V_t\ge0\quad
\text{for all}\quad t\in\mathbb Z_+,
\qquad\text{where}
\qquad\rho=\frac{64M^2-4}{1-2h}.
\end{equation}
Indeed, the inequality is evidently satisfied for $t=0$. Assuming that 
$\rho Z_t-2H_t-x_t+4V_t\ge0$:

\paragraph{Case 4a:} If $x_t\le M^{-2}V_t/4$ then 
$Z_t\ge V_t-hx_t/2\ge (1-hM^{-2}/8)V_t\ge (64M^2-4)\rho^{-1}V_t$, hence
\begin{eqnarray*}
C_{t+1}&\ge&
\rho Z_t-2(8M^2H_t+2v_t^2)-(x_t+H_t)+4(V_t+v_t^2)\\
&=&\rho Z_t-(16M^2+1)H_t-x_t+4V_t\\
&\ge&\rho Z_t-(16M^2+1)[2V_t+V_t(8M^2-1)M^{-2}/4]-V_tM^{-2}/4+4V_t\\
&=&\rho Z_t-(64M^2-4)V_t\ge0.
\end{eqnarray*}

\paragraph{Case 4b:} If $M^{-2}V_t/4\le x_t\le 4$ then 
$Z_t\ge V_t-hx_t/2\ge(1-2h)V_t=(64M^2-4)\rho^{-1}V_t$, hence
\begin{eqnarray*}
C_{t+1}&\ge&\rho Z_t-2[2H_tV_t/x_t+2v_t^2]-(x_t+H_t)+4(V_t+v_t^2)\\
&\ge& \rho Z_t-[2V_t+(8M^2-1)x_t](1+4V_t/x_t)-x_t+4V_t\\
&=&\rho Z_t-(32M^2-6)V_t-8M^2x_t-8V_t^2/x_t.
\end{eqnarray*}
The last expression is a concave function of $x_t$, hence its values, with 
$x_t$ ranging over the interval $[M^{-2}V_t/4,~4]$, are not smaller than its values at the 
ends of the interval, which are both equal to $\rho Z_t-(64M^2-4)V_t\ge0$

\paragraph{Case 4c:} if $x_t\ge 4$ then 
\[  C_{t+1}\ge\rho Z_t-2[2H_tV_t/(4V_t)+2v_t^2]-(x_t+H_t)+4(V_t+v_t^2)=C_t\ge0.\]

\paragraph{Step 5:} using the inequalities $\rho Z_t\ge2H_t+x_t-4V_t$ and
$Z_t\ge V_t-hx_t/2$, for $\delta=(1-2h)\rho/(4+\rho)$ we have either

\paragraph{Case 5a:} $(1-\delta)x_t\ge 4V_t$, in which case $Z_t\ge\delta\rho^{-1}x_t
=\frac{1-2h}{4+\rho}x_t$, or

\paragraph{Case 5b:} $(1-\delta)x_t\le 4V_t$, in which case 
$Z_t\ge V_t-hx_t/2\ge\frac{1-\delta-2h}{4}x_t=\frac{1-2h}{4+\rho}x_t$.

\vskip4mm\noindent
Combining these two observations together, we have
\[  x_t\le\frac{4+\rho}{1-2h}Z_t\le \frac{4+\rho}{(1-2h)^2}R_t=\frac{64M^2-8h}{(1-2h)^3}R_t,\]
which proves that the closed loop L2 gain from $f$ to $y$ is not larger than
$\frac{(64M^2-8h)^{1/2}}{(1-2h)^{3/2}}$.
\section{Full Analysis}\label{consolidated_analysis:sec}
In this section we provide a complete analysis of our main algorithm.  We present the algorithm including a generic exploration set and provide an alternate analysis when all control directions in an $\eps$-net are explored.
We denote an $\eps$-net as $\mathcal{N}_{\eps,d}$, defined as:
\begin{definition}\label{def:eps_net}
We define $\mathcal{N}_{\eps,d} \subseteq \R^d$ to be an $\eps$-net of  $\mathbb{S}^{d-1}$, the unit sphere with the euclidean metric, if for any $x \in \mathbb{S}^{d-1}$, we have $x' \in \mathcal{N}_{\eps,d}$ such that $\|x-x'\|_2 \le \eps$.
\end{definition}

\begin{algorithm2e}[h]
\label{l2_gain_multi_dim_gen:alg}
\KwIn{System upper bound $M$, control matrix singular value lower bound $L$, system identification parameter $\eps$, threshold parameter $\alpha$, and exploration set $V \subseteq \mathbb{S}^{d-1}$.} 
Set $q=0, K = 0$.\\
\While{$t \le T$}{
Observe $x_t$.\\
\eIf{ $\|x_{1:t}\|_2 > \alpha q$}{
Update $q = \|x_{1:t}\|_2$.\\
Call Alg.~\ref{sysid_multi_dim_gen:alg} with parameters $(q, M,L, \eps, \alpha, V)$, obtain updated $K$ and budget $q$.}
{
Execute $u_t = -K x_t$.\\
$t \leftarrow t+1$
}}
\caption{$\ltgain$ algorithm}
\end{algorithm2e}

\begin{algorithm2e}[h]
\label{sysid_multi_dim_gen:alg}
\KwIn{Disturbance budget $q$, system upper bound $M$, control matrix singular value lower bound $L$, system identification parameter $\eps$, threshold parameter $\alpha$, and exploration set $V \subseteq \mathbb{S}^{d-1}$.}
Define $N = |V| \geq d$ with $V = (v_0, v_1, \dots, v_{N-1})$.\\
Call Alg.~\ref{controlid_multi_dim_gen:alg} with parameters $(q, M,L, \eps, \alpha)$, obtain estimator $\hat{B}$ and updated budget $q$. Suppose the system evolves to time $t'= t+d$.\\
Set $q' = 4^{2d}M^{2d}\eps^{-d}q$.\\
\For{i = 0, 1, \ldots, $2N-1$}{
Observe $x_{t'+i}$.\\
\If{ $\|x_{1:t'+i}\|_2 > \alpha q$}{
Restart SysID from Line 2 with $q = \|x_{1:t'+i}\|_2$.
}
\eIf {$i$ is even}
{Play $u_{t'+i} = \xi_{i/2} \hat{B}^{-1} v_{i/2} $, $\xi_{i/2}= \frac{4^{3i/2}M^{3i/2+2}q'}{\eps^{i/2+1}}$.}
{Play $u_{t'+i} = 0$.}
}
Observe $x_{t'+2N}$, compute
$$\hat{A} \in \argmin_{\tilde{A}: \|\tilde{A}\|_2 < M} \Phi(\tilde{A}):=\max_{i \in [0,  N)} \|\tilde{A}v_{i} - \frac{x_{t'+2i+2}}{\xi_{i}}\|_2~. $$
Return $q, K = \hat{B}^{-1}\hat{A}$\\
\caption{Adversarial System ID on Budget}
\end{algorithm2e}

\begin{algorithm2e}[h]
\label{controlid_multi_dim_gen:alg}
\KwIn{Disturbance budget $q$, system upper bound $M$, control matrix singular value lower bound $L$, system identification parameter $\eps$, threshold parameter $\alpha$.}
\For{i = 0, 1, \ldots, $d-1$}{
Observe $x_{t+i}$.\\
\If{ $\|x_{1:t+i}\|_2 > \alpha q$}
{Restart SysID with $q = \|x_{1:t+i}\|_2$.}
Play $u_{t+i} = \lambda_i e_{i+1} $, $\lambda_i= \frac{4^{2i}M^{2i+1}q}{\eps^{i+1}}$.\\
}
Observe $x_{t+d}$, compute $$\hat{B} = [\frac{x_{t+1}}{\lambda_0} \cdots \frac{x_{t+d}}{\lambda_{d-1}}].$$ 
\If { $\|x_{1:t+d}\|_2 > \alpha q_k$ or $\sigma_{\min}(\hat{B}) < L/2$}
{Restart SysID with $q = \|x_{1:t+d}\|_2$.}
Return $q, \hat{B}$
\caption{Adversarial Control Matrix ID on Budget}
\end{algorithm2e}

\begin{remark}
We note that when $V$ is the standard basis, $\hat{A}$ has the closed form used in Alg.\ref{sysid_multi_dim:alg}.  In particular, the unconstrained solution\footnote{With small modifications to analysis, the constrained optimization can be replaced by a failure check if $\|\hat{A}\|_2 > 2M$ as this would indicate our disturbance budget is too small.} of Line 16 in Alg.~\ref{sysid_multi_dim_gen:alg} has $\Phi(\hat{A}) = 0$, where $\hat{A} = [\frac{x_{t'+2}}{\xi_0}  \cdots \frac{x_{t' + 2d}}{\xi_{d-1}}]$.  When $V$ is an $\eps$-net, $\Phi$ is a maximum of convex functions, and hence a convex function.
\end{remark}

\subsection{Epoch Notation.}
We define epochs in terms of rounds of system identification.  In particular, for the $k$th epoch $\tstart{k}$ is $t$ on the $k$th call to Alg.~\ref{sysid_multi_dim_gen:alg}.  and $\tend{k} = \min(\tstart{k+1} -1, T)$.  As such, within an epoch, $q$ is fixed, so we denote $q_k = \|x_{1:\tstart{k}}\|_2$ the value of $q$ within epoch $k$. Correspondingly, we denote the value of $q'$ in the $k$th epoch as $q_k'$.

\subsection{Estimation of the Control Matrix}\label{sec:B_est}
\begin{lemma}\label{control_matrix_estimation_gen:lem}
Suppose $\|f_{0:T-1}\|_2 \le q_k$ and $\alpha \ge 4^{2d}M^{2d}\eps^{-d}$, then in Alg.~\ref{controlid_multi_dim_gen:alg}, we have $\|x_{1:\tstart{k}+i}\|_2 \le  4^{2i}M^{2i}q_k\eps^{-i}$, for $0\le i \le d$.
\end{lemma}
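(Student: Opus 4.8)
The plan is to prove the bound by a clean induction on $i$, establishing the slightly stronger invariant that throughout the loop of Alg.~\ref{controlid_multi_dim_gen:alg} none of the restart checks fire and $\|x_{1:\tstart{k}+i}\|_2 \le 4^{2i}M^{2i}q_k\eps^{-i}$ for every $0 \le i \le d$. The base case $i=0$ holds with equality, since by the epoch notation $q_k = \|x_{1:\tstart{k}}\|_2$ and $4^0 M^0 \eps^0 = 1$. For the inductive step I would abbreviate $s=\tstart{k}$ and $P_i = 4^{2i}M^{2i}q_k\eps^{-i}$, so the hypothesis reads $\|x_{1:s+i}\|_2 \le P_i$; since $P_i \le P_d = 4^{2d}M^{2d}\eps^{-d}q_k \le \alpha q_k$ by the assumption on $\alpha$ together with $i\le d$, the in-loop check $\|x_{1:s+i}\|_2 > \alpha q$ fails and the algorithm indeed plays the exploratory control $u_{s+i}=\lambda_i e_{i+1}$.

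Next I would split the energy as $\|x_{1:s+i+1}\|_2^2 = \|x_{1:s+i}\|_2^2 + \|x_{s+i+1}\|_2^2$ and bound the single new state via the dynamics $x_{s+i+1}=A x_{s+i}+Bu_{s+i}+z_{s+i}$ with $z_{s+i}=w_{s+i}+f_{s+i}$. By the triangle inequality and Assumption~\ref{assumption:bounded_dynamics},
\begin{equation*}
\|x_{s+i+1}\|_2 \le M\|x_{s+i}\|_2 + M\lambda_i + \|w_{s+i}\|_2 + \|f_{s+i}\|_2.
\end{equation*}
Each term is then controlled against $P_i$: $\|x_{s+i}\|_2 \le \|x_{1:s+i}\|_2 \le P_i$; Assumption~\ref{assumption:noise_bound} gives $\|w_{s+i}\|_2 \le \|w_{1:s+i}\|_2 \le h\|x_{1:s+i}\|_2 \le h P_i$; the disturbance budget gives $\|f_{s+i}\|_2 \le \|f_{0:T-1}\|_2 \le q_k$; and the explicit schedule $\lambda_i = 4^{2i}M^{2i+1}q_k\eps^{-(i+1)}$ gives $M\lambda_i = (M^2/\eps)P_i$. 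Factoring out $P_i$ (using $q_k \le P_i$, valid since $\eps \le 1 \le M$) yields $\|x_{s+i+1}\|_2 \le (M + M^2/\eps + h + 1)P_i$.

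It then remains to close the induction by verifying that this per-step blow-up is absorbed by the ratio $P_{i+1}/P_i = 16M^2/\eps$, i.e. that $P_i^2 + \|x_{s+i+1}\|_2^2 \le P_{i+1}^2 = (256M^4/\eps^2)P_i^2$. This reduces to the scalar inequality $1 + (M + M^2/\eps + h + 1)^2 \le 256M^4/\eps^2$, which holds with room to spare: under $M\ge1$, $\eps\le 1$, and $h\le 1$ each of the four summands is at most $M^2/\eps$, so the left-hand side is at most $1 + 16M^4/\eps^2 \le 256M^4/\eps^2$. This establishes $\|x_{1:s+(i+1)}\|_2 \le P_{i+1}$ and completes the step.

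I expect the only real difficulty to be bookkeeping rather than anything conceptual. The two points requiring care are (i) that the geometric schedule $\lambda_i$ is matched exactly to the claimed growth rate $16M^2/\eps$ per step, so that the $M\lambda_i$ contribution stays proportional to $P_i$ and the induction is self-sustaining; and (ii) that the invariant simultaneously certifies, under the hypothesis $\|f_{0:T-1}\|_2 \le q_k$, that none of the in-loop restart conditions fire, which is precisely where the assumption $\alpha \ge 4^{2d}M^{2d}\eps^{-d}$ is consumed (through $P_d \le \alpha q_k$). Note that no accuracy statement about $\hat B$ is needed here; this lemma is purely an a-priori state-energy bound that later feeds into the estimation-error analysis of the control matrix.
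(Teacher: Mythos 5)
Your proof is correct and follows essentially the same route as the paper's: induction on $i$, the same triangle-inequality decomposition of $x_{\tstart{k}+i+1}=Ax_{\tstart{k}+i}+Bu_{\tstart{k}+i}+z_{\tstart{k}+i}$ with the same term-by-term bounds, and the same observation that the restart check cannot fire under the hypothesis on $\alpha$. The only cosmetic difference is that you close the induction via the exact identity $\|x_{1:s+i+1}\|_2^2=\|x_{1:s+i}\|_2^2+\|x_{s+i+1}\|_2^2$ where the paper simply adds the norms; both yield the stated constants.
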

\begin{proof}
We prove the lemma by induction. Note that if the lemma was true, no new epoch will start because $\|x_{1:t+i}\|_2 > \alpha q$ for any $i$. Now for the base case, note that for $i=0$, the inequality holds trivially. Suppose the condition holds for $i$. For $i+1$, we have
\begin{align*}
    \|x_{\tstart{k}+i+1}\|_2 &= \|A x_{\tstart{k}+i} + B u_{\tstart{k}+i} +z_{\tstart{k}+i}\|_2 \\
    &\le M\|x_{\tstart{k}+i}\|_2 + M\lambda_i + h\|x_{1:\tstart{k}+i}\|_2 + q_k\\
    &\le 4^{2i}M^{2i+1}q_k\eps^{-i} + 4^{2i}M^{2i+2}q_k\eps^{-(i+1)} + h4^{2i}M^{2i}q_k\eps^{-i} + q_k\\
    &\le 4^{2i+1}M^{2i+2}q_k\eps^{-(i+1)}
\end{align*}
Adding previous iterations, we have
\begin{align*}
\|x_{1:\tstart{k}+i+1}\|_2 &\le 4^{2i+1}M^{2i+2}q_k\eps^{-(i+1)} + 4^{2i}M^{2i}q_k\eps^{-i}\le 4^{2(i+1)}M^{2(i+1)}q_k\eps^{-(i+1)}.
\end{align*}
\end{proof}

\begin{lemma}\label{controlid_gen:lem}
Suppose $\|f_{0:T-1}\|_2 \le q_k$ and $\alpha \ge 4^{2d}M^{2d}\eps^{-d}$, then running Alg.~\ref{controlid_multi_dim_gen:alg} with $\eps \le \frac{L}{12\sqrt{d}}$ produces $\hat{B}$ such that $\|\hat{B}- B\|_2 \le 3 \eps\sqrt{d}$ and $\|B\hat{B}^{-1} - I\|_2 \le \frac{6\eps\sqrt{d}}{L} \le \frac{1}{2}$, with $\|x_{1:\tstart{k}+d}\|_2 \le   4^{2d}M^{2d}q_k\eps^{-d}$.
\end{lemma}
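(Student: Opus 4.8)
The plan is to bound the estimation error of $\hat B$ one column at a time and then aggregate through the Frobenius norm. First I would invoke Lemma~\ref{control_matrix_estimation_gen:lem}, which under the stated hypotheses ($\|f_{0:T-1}\|_2 \le q_k$ and $\alpha \ge 4^{2d}M^{2d}\eps^{-d}$) guarantees both that no state-triggered restart occurs during the $d$ probing steps and that $\|x_{1:\tstart{k}+i}\|_2 \le 4^{2i}M^{2i}q_k\eps^{-i}$ for all $0 \le i \le d$. Taking $i=d$ immediately yields the claimed state bound $\|x_{1:\tstart{k}+d}\|_2 \le 4^{2d}M^{2d}q_k\eps^{-d}$, so the remaining work is purely the accuracy analysis.

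For the per-column bound, I would unroll a single step of the dynamics. Since $u_{\tstart{k}+i} = \lambda_i e_{i+1}$, we have $x_{\tstart{k}+i+1} = A x_{\tstart{k}+i} + \lambda_i B e_{i+1} + z_{\tstart{k}+i}$, so the $i$-th column of $\hat B - B$ equals $\frac{x_{\tstart{k}+i+1}}{\lambda_i} - Be_{i+1} = \frac{1}{\lambda_i}\big(Ax_{\tstart{k}+i} + z_{\tstart{k}+i}\big)$. I would bound the numerator via $\|A x_{\tstart{k}+i}\|_2 \le M\|x_{1:\tstart{k}+i}\|_2$ and $\|z_{\tstart{k}+i}\|_2 \le h\|x_{1:\tstart{k}+i}\|_2 + q_k$, the latter from Assumption~\ref{assumption:noise_bound} together with $\|f_{0:T-1}\|_2 \le q_k$. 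The crux is that $\lambda_i = \frac{4^{2i}M^{2i+1}q_k}{\eps^{i+1}}$ is designed precisely so that dividing the state bound $4^{2i}M^{2i}q_k\eps^{-i}$ by $\lambda_i$ cancels the exponential-in-$i$ growth: the $A$-term contributes $\eps$, the $w$-term contributes $\frac{h\eps}{M} \le \eps$, and the $f$-term contributes $\eps^{i+1} \le \eps$. Summing, each column error is at most $3\eps$, hence $\|\hat B - B\|_2 \le \|\hat B - B\|_F \le \sqrt{d}\cdot 3\eps = 3\eps\sqrt{d}$.

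Finally, for the multiplicative bound I would write $B\hat B^{-1} - I = (B - \hat B)\hat B^{-1}$ and estimate $\|B\hat B^{-1} - I\|_2 \le \|B - \hat B\|_2 / \sigma_{\min}(\hat B)$. Here I would apply Weyl's inequality for singular values: since $\|\hat B - B\|_2 \le 3\eps\sqrt{d} \le L/4$ (using $\eps \le \frac{L}{12\sqrt{d}}$) and $\sigma_{\min}(B) > L$, we obtain $\sigma_{\min}(\hat B) > 3L/4 \ge L/2$, which simultaneously confirms that the $\sigma_{\min}(\hat B) < L/2$ restart check does not fire. Substituting the guaranteed floor $\sigma_{\min}(\hat B) \ge L/2$ gives $\|B\hat B^{-1} - I\|_2 \le \frac{3\eps\sqrt{d}}{L/2} = \frac{6\eps\sqrt{d}}{L}$, and plugging in $\eps \le \frac{L}{12\sqrt{d}}$ bounds this by $\frac12$.

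The main obstacle is the bookkeeping in the second paragraph: one must verify that the geometric scaling of $\lambda_i$ exactly offsets the geometric state growth so that each of the three error contributions lands at $O(\eps)$ uniformly in $i$. This is where the specific numerical constants ($4^{2i}$, $M^{2i+1}$, $\eps^{i+1}$) in the algorithm's definition of $\lambda_i$ are used, and extracting the clean constant $3$ (rather than something larger) relies on $M \ge 1$, $h \le 1$, and $\eps < 1$ to collapse the three terms.
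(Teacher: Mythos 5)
Your proposal is correct and follows essentially the same route as the paper's proof: it uses Lemma~\ref{control_matrix_estimation_gen:lem} for the state bounds and restart-free guarantee, bounds each column error by $3\eps$ via the same three-term decomposition of $\frac{1}{\lambda_i}(Ax_{\tstart{k}+i}+z_{\tstart{k}+i})$, aggregates through the Frobenius norm, and controls $\sigma_{\min}(\hat B)$ by singular-value perturbation (your Weyl inequality is the paper's Ky Fan inequality) before writing $B\hat B^{-1}-I=(B-\hat B)\hat B^{-1}$.
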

\begin{proof}
First note that as in Lem.~\ref{control_matrix_estimation_gen:lem}, no new epoch will start because $\|x_{1:t+i}\|_2 > \alpha q$ for any $i$. Let $i\in[0, d)$. Consider the estimation error of the $i+1$-th column of $B$:
\begin{align*}
    \|\frac{x_{\tstart{k}+i+1}}{\lambda_i} - Be_{i+1}\|_2 &= \frac{1}{\lambda_i}\|Ax_{\tstart{k}+i}+z_{\tstart{k}+i}\|_2 \le \frac{M}{\lambda_i}\|x_{\tstart{k}+i}\|_2 +\frac{1}{\lambda_i}\|z_{\tstart{k}+i}\|_2.
\end{align*}
By Lem.~\ref{control_matrix_estimation_gen:lem}, we have
$
\|x_{\tstart{k}+i}\|_2, \|w_{\tstart{k}+i}\|_2 \le 4^{2i}M^{2i}q_k\eps^{-i}.
$
Therefore we have $$
    \|\frac{x_{\tstart{k}+i+1}}{\lambda_i} - Be_{i+1}\|_2 \le \frac{M}{\lambda_i}\|x_{\tstart{k}+i}\|_2 +\frac{1}{\lambda_i}\|z_{\tstart{k}+i}\|_2 \le 3\eps.
$$
Concatenating the column estimates, we upper bound the Frobenius norm of $B - \hat{B}$,
\begin{align*}
    \|B - \hat{B}\|_F^2 &= \sum_{i=0}^{d-1} \|\frac{x_{\tstart{k}+i+1}}{\lambda_i} - Be_{i+1}\|_2^2 \le 9d\eps^2.
\end{align*}
We conclude that $\|B - \hat{B}\|_2\le \|B - \hat{B}\|_F \le 3\eps\sqrt{d}$. Moreover, with our choice of $\eps$, we have $\|B - \hat{B}\|_2\le \frac{L}{4}$, so by Ky Fan singular value inequalities, we have $\sigma_{\min}(B) \le  \sigma_{\min}(\hat{B}) + \frac{L}{4}$,
 and hence $\sigma_{\min}(\hat{B}) \ge \frac{L}{2}$, and the condition in Line 10 will not be triggered.

Now, we can write $B = \hat{B} + 3\eps\sqrt{d} C$ for some $C\in \reals^{d \times d}$, $\|C\| \le 1$. Then we have
\begin{align*}
      \|B\hat{B}^{-1} - I\| =3\eps\sqrt{d}\|C\hat{B}^{-1}\| \le \frac{3\sqrt{d}\eps}{\sigma_{\min}(\hat{B})} \le \frac{6\eps\sqrt{d}}{L}.  
\end{align*}
\end{proof}

\subsection{Estimation of the System}\label{sec:A_est}
\begin{lemma}\label{K_stability_gen:lem}
 Suppose $\|f_{0:T-1}\|_2 \le q_k$, $\alpha \ge 4^{2d}M^{2d}\eps^{-d}$, and Alg.~\ref{sysid_multi_dim_gen:alg} produces $\hat{A}$  such that $\|A- \hat{A}\|_2 \le \eps_A$ then the resultant controller $K$ satisfies $\|A- BK\| \le \eps_A + \frac{6\eps M\sqrt{d}}{L}$.
\end{lemma}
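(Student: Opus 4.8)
The plan is to bound $\|A - BK\|_2$ by a single triangle-inequality decomposition, exploiting the explicit form $K = \hat{B}^{-1}\hat{A}$ of the controller together with the two estimation facts already available: the assumed accuracy of $\hat A$ and the near-identity behaviour of $B\hat B^{-1}$ established in Lem.~\ref{controlid_gen:lem}. The key algebraic observation is that $BK = B\hat{B}^{-1}\hat{A}$, so adding and subtracting $\hat A$ yields the splitting
\begin{align*}
A - BK = (A - \hat{A}) + (I - B\hat{B}^{-1})\hat{A},
\end{align*}
where the first term captures the accuracy of the system-matrix estimate and the second captures the error from preconditioning by $\hat B^{-1}$ rather than the true $B^{-1}$.

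Applying the triangle inequality and submultiplicativity of the spectral norm, I would then bound
\begin{align*}
\|A - BK\|_2 \le \|A - \hat{A}\|_2 + \|I - B\hat{B}^{-1}\|_2\,\|\hat{A}\|_2.
\end{align*}
The first summand is at most $\eps_A$ by hypothesis. For the middle factor of the second summand, I would invoke Lem.~\ref{controlid_gen:lem}, which under the present standing assumptions ($\|f_{0:T-1}\|_2 \le q_k$ and $\alpha \ge 4^{2d}M^{2d}\eps^{-d}$, with $\eps \le L/(12\sqrt d)$) guarantees $\|B\hat{B}^{-1} - I\|_2 \le 6\eps\sqrt{d}/L$; since negation does not change the spectral norm, this also bounds $\|I - B\hat{B}^{-1}\|_2$. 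Finally, $\hat A$ is produced by the constrained minimization in Line 16 of Alg.~\ref{sysid_multi_dim_gen:alg}, which enforces $\|\hat{A}\|_2 < M$, giving $\|\hat{A}\|_2 \le M$. Combining the three estimates gives
\begin{align*}
\|A - BK\|_2 \le \eps_A + \frac{6\eps\sqrt{d}}{L}\cdot M = \eps_A + \frac{6\eps M\sqrt{d}}{L},
\end{align*}
which is exactly the claimed inequality.

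I do not expect a genuine obstacle here: the statement is a purely algebraic consequence of the accuracy of $\hat A$ (assumed) and the near-orthogonality of $B\hat B^{-1}$ to the identity (from Lem.~\ref{controlid_gen:lem}), together with the norm constraint built into the algorithm's definition of $\hat A$. The only points requiring mild care are to confirm that the hypotheses of Lem.~\ref{controlid_gen:lem} are indeed in force so that its conclusion can be quoted verbatim, and to note that the strict constraint $\|\hat{A}\|_2 < M$ is enough to use $\|\hat{A}\|_2 \le M$ in the final product bound.
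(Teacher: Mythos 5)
Your proof is correct and follows essentially the same route as the paper: both decompose $A - BK = (A-\hat A) + (I - B\hat B^{-1})\hat A$, bound the first term by $\eps_A$, and bound the second by $\|B\hat B^{-1}-I\|_2\|\hat A\|_2 \le \frac{6\eps\sqrt d}{L}\cdot M$ using Lem.~\ref{controlid_gen:lem} and the norm constraint on $\hat A$. No substantive differences.
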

\begin{proof}
By  Lem.~\ref{controlid_gen:lem}, the algorithm will not start a new epoch with the choice of $\alpha$, and we have $\|B\hat{B}^{-1} - I\|_2  \le \frac{6\eps \sqrt{d}}{L}$, so we have 
\begin{align*}
 BK = B\hat{B}^{-1}\hat{A} = \hat{A} + \frac{6\eps \sqrt{d}}{L} C\hat{A}   
\end{align*}
for $C$ with $\|C\|_2 \le 1$.  Thus, we have 
\begin{align*}
    \|A - BK\|_2  &\le \|A - \hat{A}\|_2 + \frac{6\eps \sqrt{d}}{L} \|C\|\|\hat{A}\| \le \eps_A + \frac{6\eps M\sqrt{d}}{L} ~.
\end{align*}
\end{proof}

\begin{lemma}\label{approx_A_gen:lem_2}
 Suppose $\|f_{0:T-1}\|_2 \le q_k$ and $\alpha > R= (4M)^{5N}\eps^{-2N}$, then Alg.~\ref{sysid_multi_dim_gen:alg} produces $\hat{A}$ such that
 \begin{align*}
 \max_{v \in V} \|(A- \hat{A}) v\|_2 \le \frac{28\eps M\sqrt{d}}{L} +  3h~,
 \end{align*}
 with $\|x_{1:t'+2N}\|_2 \le  Rq_k$.
\end{lemma}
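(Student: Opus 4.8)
The plan is to establish the two claims of the lemma in sequence: first a uniform bound on the cumulative state energy $\|x_{1:t'+i}\|_2$ throughout the $A$-identification loop (which both yields $\|x_{1:t'+2N}\|_2 \le Rq_k$ and certifies that no in-loop restart is triggered, since $\alpha > R$), and then the per-direction accuracy bound. The backbone of the accuracy argument is the exact identity obtained by unrolling two steps of the dynamics across an even/odd probing pair. Writing $j=i/2$, at the even step we apply $u_{t'+2j}=\xi_j \hat B^{-1} v_j$ and at the following odd step zero control, so that
\begin{align*}
x_{t'+2j+2} = A^2 x_{t'+2j} + \xi_j A B\hat B^{-1} v_j + A z_{t'+2j} + z_{t'+2j+1}.
\end{align*}
Dividing by $\xi_j$ and subtracting from $A v_j$ gives
\begin{align*}
A v_j - \frac{x_{t'+2j+2}}{\xi_j} = A(I - B\hat B^{-1}) v_j - \frac{A^2 x_{t'+2j}}{\xi_j} - \frac{A z_{t'+2j} + z_{t'+2j+1}}{\xi_j},
\end{align*}
which is exactly the quantity appearing inside $\Phi(A)$.

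First I would run the state-growth induction. Using $\|x_{1:t'}\|_2 \le q'$ from Lem.~\ref{controlid_gen:lem}, together with $\|B\hat B^{-1}\|_2 \le \tfrac32$ (since $\|B\hat B^{-1}-I\|_2\le\tfrac12$), $\|A\|_2 \le M$, and $\|z_t\|_2 \le h\|x_{1:t}\|_2 + q_k$ from Assumption~\ref{assumption:noise_bound}, I would show by induction on $i$ that $\|x_{1:t'+i}\|_2$ is dominated by the current control scale: at an even step the state jumps to order $\xi_j$ (the $\tfrac32\xi_j$ term dominates), and at the next odd step it grows by at most a factor $M$ to order $M\xi_j$. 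Because the multipliers satisfy $\xi_{j+1}/\xi_j = 64 M^3/\eps$, the next injected control dwarfs everything accumulated so far, so the cumulative energy (a sum of squares, hence controlled by its largest term up to a geometric factor) stays of order $\xi_{N-1}$. Substituting $q' = 4^{2d}M^{2d}\eps^{-d}q_k$ into $\xi_{N-1}$ and using $N \ge d$ gives $\|x_{1:t'+2N}\|_2 \le Rq_k$ with $R=(4M)^{5N}\eps^{-2N}$, with room to spare for the geometric accumulation and the final factor $M$; since $\alpha > R$, none of the restart checks fire.

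With the state bounds in hand, I would bound $\Phi(A)$ term by term in the identity above. The first term is controlled by Lem.~\ref{controlid_gen:lem}: $\|A(I-B\hat B^{-1})v_j\|_2 \le M\|B\hat B^{-1}-I\|_2 \le \tfrac{6\eps M\sqrt d}{L}$. For the second and third terms the induction forces $\|x_{t'+2j}\|_2$ and the corresponding $\|z_{t'+2j}\|_2$ to be of order $M\xi_{j-1}$, which after division by $\xi_j = 64M^3\xi_{j-1}/\eps$ leaves only $O(\eps)$ contributions (plus an $O(q_k/\xi_j)$ piece, negligible since $\xi_0 \ge M^2 q'/\eps \gg q_k$). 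The one irreducible term is the disturbance $z_{t'+2j+1}$ at the odd probing step: there $x_{t'+2j+1}\approx \xi_j v_j$, so $\|x_{1:t'+2j+1}\|_2 \le \tfrac32\xi_j$ up to lower-order terms, and Assumption~\ref{assumption:noise_bound} gives $\|w_{t'+2j+1}\|_2 \le \tfrac32 h\xi_j$; hence $\|z_{t'+2j+1}\|_2/\xi_j \le \tfrac32 h + O(q_k/\xi_j)$. Collecting terms (and folding the stray $O(\eps)$ pieces into the first using $M\ge1$, $L\le1$) yields $\Phi(A) \le \tfrac{14\eps M\sqrt d}{L} + \tfrac32 h$. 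Finally, since $\|A\|_2 \le M$, the true $A$ is feasible for the minimization defining $\hat A$, so $\Phi(\hat A) \le \Phi(A)$, and the triangle inequality $\|(A-\hat A)v_j\|_2 \le \|Av_j - \tfrac{x_{t'+2j+2}}{\xi_j}\|_2 + \|\hat A v_j - \tfrac{x_{t'+2j+2}}{\xi_j}\|_2 \le 2\Phi(A)$ gives the stated $\tfrac{28\eps M\sqrt d}{L} + 3h$.

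The main obstacle is the state-growth induction, and the difficulty is quantitative rather than conceptual: one must verify that the geometric growth rate of the probing controls is fast enough that each newly injected $\xi_j$ strictly dominates the accumulated state (so the ratios $\|x_{t'+2j}\|_2/\xi_j$ collapse to $O(\eps)$), yet slow enough that the total energy after $2N$ steps still fits under $Rq_k$. This is what forces the precise $\xi_j = 4^{3j}M^{3j+2}\eps^{-(j+1)}q'$ schedule and the careful tracking of how the $B$-phase budget $q'$ and the $\sqrt d$ losses propagate. By contrast, the conceptual crux is cleanly isolated: the adversarial $w$ acting on the large probing state $\xi_j v_j$ at the odd step produces the irreducible $O(h)$ per-direction error, which is precisely what later limits the attainable robustness once these per-direction bounds are aggregated into a spectral-norm bound on $A - \hat A$.
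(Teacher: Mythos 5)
Your proposal is correct and takes essentially the same route as the paper's proof: the same two-step unrolling identity, the same state-growth induction (which the paper isolates as Lem.~\ref{approx_A_gen:lem_1}), the same decomposition into the $A(I-B\hat B^{-1})v_j$ bias term, the $O(\eps)$ carry-over terms, and the irreducible $O(h)$ disturbance at the odd probing step, followed by the same optimality-plus-triangle-inequality step yielding $2\Phi(A)$. Even your intermediate constant $\Phi(A)\le \frac{14\eps M\sqrt d}{L}+\frac{3h}{2}$ matches the paper's.
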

\begin{proof}
We first note by choice of $\alpha$, the SysID will not be restarted. We first upper bound $\Phi(A)$.  We also have $\Phi(\hat{A}) \leq \Phi(A)$ by optimality of $\hat{A}$. 

Let $i\in[0, N)$. Consider the estimation error of $Av_i$:
\begin{align*}
    \|\frac{x_{t'+2i+2}}{\xi_i} - AB\hat{B}^{-1}v_{i}\|_2 &= \frac{1}{\xi_i}\|A^2x_{t'+2i}+Az_{t'+2i}+z_{t'+2i+1}\|_2\\
    &\le \frac{M^2}{\xi_i}\|x_{t'+2i}\|_2 + \frac{M}{\xi_i}\|z_{t'+2i}\|_2+\frac{1}{\xi_i}\|z_{t'+2i+1}\|_2~.
\end{align*}
By Lemma \ref{approx_A_gen:lem_1}, we have
$
\|x_{t'+2i}\|_2, \|w_{t'+2i}\|_2 \le 4^{3i}M^{3i}q_k'\eps^{-i}.
$
Therefore for the first two terms we have,
$$
\frac{M^2}{\xi_i}\|x_{t'+2i}\|_2 + \frac{M}{\xi_i}\|z_{t'+2i}\|_2 \le \frac{M^2}{\xi_i}\|x_{t'+2i}\|_2 + \frac{M}{\xi_i}(\|w_{t'+2i}\|_2+ \|f_{t'+2i}\|_2) \le 3\eps.
$$
For the trajectory-dependent noise at time $t'+2i+1$, we have
\begin{align*}
  \frac{1}{\xi_i}\|w_{t'+2i+1}\|_2  \le \frac{h}{\xi_i} \|x_{1:t'+2i+1}\|_2 &\le \frac{h}{\xi_i}(\|x_{1:t'+2i}\|_2+ \|x_{t'+2i+1}\|_2)\\
  &\le  \frac{h}{\xi_i}(4^{3i}M^{3i}q_k'\eps^{-i}+ \|Ax_{t'+2i}+\xi_i B\hat{B}^{-1}v_i + z_{t'+2i}\|_2)\\
  &\le h\eps + \frac{hM}{\xi_i}\|x_{t'+2i}\|_2 + h\|B\hat{B}^{-1}\| +\frac{h}{\xi_i}\|z_{t'+2i}\|_2\\
  &\le 4\eps + h\|B\hat{B}^{-1}\| \le 4\eps + h(1 + \frac{1}{2})~.
\end{align*}
The last inequality holds, via Lem.~\ref{controlid_gen:lem}.
Therefore we have
\begin{align*}
\|\frac{x_{t'+2i+2}}{\xi_i} - AB\hat{B}^{-1}v_i\|_2 &\le 8\eps+\frac{3h}{2}.
\end{align*}
Adding the error induced by the bias of $\hat{B}$, 
\begin{align*}
    \|\frac{x_{t'+2i+2}}{\xi_i} - Av_i\|_2 &\le \|\frac{x_{t'+2i+2}}{\xi_i} - AB\hat{B}^{-1}v_i\|_2 + \|AB\hat{B}^{-1}v_i - Av_i\|_2\\
    &\le 8\eps + \frac{3h}{2} + \|A\|\|B\hat{B}^{-1} - I\|\\
    &\le 8\eps + \frac{3h}{2} + \frac{6M\sqrt{d}\eps}{L} \le \frac{14\eps M\sqrt{d}}{L} + \frac{3h}{2}~.
\end{align*}

Therefore, we have $\Phi(\hat{A}) \le \Phi(A)\le \frac{14\eps M\sqrt{d}}{L} + \frac{3h}{2}$ and it follows that 
\begin{align*}
    \max_{v \in V} \|(A- \hat{A})v\|_2 &= \max_{i \in [0,N)}\|(A- \hat{A})v_i\|_2 \\
    &\le  \max_{i \in [0,N)} \Big(\|Av_{i} - \frac{x_{t'+2i+2}}{\xi_{i}}\|_2 + \|\hat{A}v_{i} - \frac{x_{t'+2i+2}}{\xi_{i}}\|_2\Big)\\
    &\le \Phi(A) + \Phi(\hat{A}) \le \frac{28\eps M\sqrt{d}}{L} +  3h~.
\end{align*}

Finally, for the state magnitude at the final iteration, by Lem.~\ref{approx_A_gen:lem_1}
\begin{align*}
    \|x_{t'+2N}\|_2 &= \|Ax_{t'+2N-1} + w_{t'+2N-1} + f_{t'+2N-1}\|_2\\
    &\le M\|x_{t'+2N-1}\|_2 + h\|x_{1:t'+2N-1}\|_2+q_k\\
    &\le 4^{3N-1}M^{3N}q'_k\eps^{-N} + h4^{3N-1}M^{3N-1}q_k'\eps^{-N}+q_k\\
    &\le 3\cdot4^{3N-1}M^{3N}q'_k\eps^{-N}.
\end{align*}
Adding previous iterations, we have
$
\|x_{1:t'+2N}\|_2 \le 4^{3N}M^{3N}q'_k\eps^{-N} \le 4^{5N}M^{5N}\eps^{-2N}q_k.
$
\end{proof}

\begin{lemma}\label{approx_A_gen:lem_1}
Suppose $\|f_{0:T-1}\|_2 \le q_k$ and $\alpha >R= (4M)^{5N}\eps^{-2N}$, then in Alg.~\ref{sysid_multi_dim_gen:alg}, for odd iterations after $t'$, we have $\|x_{1:t'+2i+1}\|_2 \le  4^{3i+2}M^{3i+2}q'_k\eps^{-(i+1)}$, and for even iterations we have $\|x_{1:t'+2i}\|_2 \le  4^{3i}M^{3i}q'_k\eps^{-i}$, for $0\le i < N$.
\end{lemma}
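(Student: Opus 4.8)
The plan is to prove both the even- and odd-iteration bounds simultaneously by a single induction on $i$, alternating between the two as the algorithm alternates between exploratory and zero controls. The base case is the even bound at $i=0$, namely $\|x_{1:t'}\|_2 \le q'_k$: since $t' = \tstart{k}+d$ and $q'_k = 4^{2d}M^{2d}\eps^{-d}q_k$, this is exactly the terminal state bound of Lem.~\ref{controlid_gen:lem} (equivalently Lem.~\ref{control_matrix_estimation_gen:lem}). As in that lemma, I will carry along the observation that, as long as every partial bound stays below $\alpha q_k$, the restart condition in Line 7 is never triggered, so the recorded dynamics $x_{s+1} = A x_s + B u_s + z_s$ are the ones actually used; this makes the induction self-consistent.

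For the inductive step I unroll the dynamics one time step at a time, treating even and odd iterations separately. On an even iteration $t'+2i$ the control is $u_{t'+2i} = \xi_i \hat B^{-1} v_i$, so $x_{t'+2i+1} = A x_{t'+2i} + \xi_i B\hat B^{-1} v_i + z_{t'+2i}$; I bound the middle term by $\tfrac{3}{2}\xi_i$ using $\|B\hat B^{-1}\|_2 \le \tfrac{3}{2}$ from Lem.~\ref{controlid_gen:lem} and $\|v_i\|_2 \le 1$. On an odd iteration the control is zero, so $x_{t'+2i+2} = A x_{t'+2i+1} + z_{t'+2i+1}$. In both cases I bound the disturbance by $\|z_s\|_2 \le h\|x_{1:s}\|_2 + q_k$ via Assumption~\ref{assumption:noise_bound} and $\|f_{0:T-1}\|_2 \le q_k$, and I control $\|x_s\|_2 \le \|x_{1:s}\|_2$ by the induction hypothesis. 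The cumulative norm is then updated through $\|x_{1:s+1}\|_2 \le \|x_{1:s}\|_2 + \|x_{s+1}\|_2$.

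The crux of the argument is bookkeeping the constants so that the dominant contribution fits inside the prescribed factors. On an even step the dominant term is the exploratory control $\tfrac{3}{2}\xi_i = \tfrac{3}{2}\,4^{3i}M^{3i+2}q'_k\eps^{-(i+1)}$; the terms $M\|x_{t'+2i}\|_2$, $h\|x_{1:t'+2i}\|_2$ and $q_k$ are each smaller by a factor of order $\eps/M$ (using $M\ge 1$, $\eps\le L/(12\sqrt d)\le 1/12$, $h\le 1/(12\sqrt d)$, and $q_k \le q'_k$), so their sum is absorbed into the slack of the factor $16$ that converts $4^{3i}$ into the target $4^{3i+2}$. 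On an odd step the term $M\|x_{t'+2i+1}\|_2$ dominates and carries the extra factor $M^3$, and adding the cumulative $4^{3i+2}M^{3i+2}q'_k\eps^{-(i+1)}$ still fits inside the factor $64M^3$ that converts $4^{3i}M^{3i}$ into $4^{3(i+1)}M^{3(i+1)}$. The main obstacle is simply keeping these two interleaved exponent sequences aligned while verifying the slack is never tight; no single estimate is hard, but the constants must be tracked exactly.

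Finally, evaluating the largest partial bound, reached at $i = N-1$, gives $\|x_{1:t'+2N-1}\|_2 \le 4^{3N-1}M^{3N-1}q'_k\eps^{-N} \le (4M)^{5N}\eps^{-2N}q_k = R q_k < \alpha q_k$, using $q'_k = 4^{2d}M^{2d}\eps^{-d}q_k$ with $d \le N$. This confirms the restart threshold is never crossed throughout the phase, closing the self-consistency loop and completing the induction.
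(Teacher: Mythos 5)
Your proposal is correct and follows essentially the same route as the paper's proof: a joint induction on the pair index that alternates between the even (cumulative-state) bound and the odd bound, with the exploratory term $\xi_i\|B\hat B^{-1}\|_2$ dominating on controlled steps and $M\|x_{t'+2i+1}\|_2$ dominating on zero-control steps, the disturbance handled via $\|z_s\|_2\le h\|x_{1:s}\|_2+q_k$, and the restart condition ruled out by the choice of $\alpha$. The only cosmetic differences are your constant $3/2$ for $\|B\hat B^{-1}\|_2$ versus the paper's $2$, and your slightly more explicit closing check that the largest partial bound stays below $Rq_k<\alpha q_k$.
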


\begin{proof}
We prove this by induction.  Note, by the condition on $\alpha$, SysID will not be restarted as long as our bounds on $\|x_{1:t'+ j}\|_2$ hold. For the base case, note that for $i=0$, the even case holds because by Lemma \ref{control_matrix_estimation_gen:lem}, $\|x_{1:\tstart{k}+d}\|_2 \le q'_k$. For the odd case, we have
\begin{align*}
    \|x_{t'+1}\|_2 &\le \|Ax_{t'}\|_2 + \xi_0\|B\hat{B}^{-1}\|_2 + \|z_{t'}\|_2\\
    &\le M\|x_{t'}\|_2 + 2\xi_0 + hq'_k + q_k \\
    &\le 3Mq'_k + \frac{2M^2q'_k}{\eps} \le\frac{5M^2q'_k}{\eps},
\end{align*}
where the first inequality holds by Lemma \ref{controlid_gen:lem}.
Adding the previous iterations, we have $\|x_{1:t'+1}\|_2\le 6M^2q'_k\eps^{-1} \le 4^{2}M^{2}q'_k\eps^{-1}$. Now, suppose the conditions hold for both even and odd iterations for $i$. For $i+1$, for the even iteration,
\begin{align*}
    \|x_{t'+2(i+1)}\|_2 &= \|Ax_{t'+2i+1} + w_{t'+2i+1} + f_{t'+2i+1}\|_2\\
    &\le M\|x_{t'+2i+1}\|_2 + h\|x_{1:t'+2i+1}\|_2+q_k\\
    &\le 4^{3i+2}M^{3(i+1)}q'_k\eps^{-(i+1)} + h4^{3i+2}M^{3i+2}q'_k\eps^{-(i+1)}+q_k\\
    &\le 3\cdot4^{3i+2}M^{3(i+1)}q'_k\eps^{-(i+1)}.
\end{align*}
Adding previous iterations, we have
$$
\|x_{1:t'+2(i+1)}\|_2 \le 4^{3(i+1)}M^{3(i+1)}q'_k\eps^{-(i+1)}.
$$
For the odd iteration, 
\begin{align*}
    \|x_{t'+2(i+1)+1}\|_2 &= \|Ax_{t'+2(i+1)} + Bu_{t'+2(i+1)} +w_{t'+2(i+1)} + f_{t'+2(i+1)}\|_2\\
    &\le M\|x_{t'+2(i+1)}\|_2 + 2\xi_{i+1}+ h\|x_{1:t'+2(i+1)}\|_2+q_k\\
    &\le 4^{3i+3}M^{3i+4}q'_k\eps^{-(i+1)} + 2\cdot4^{3i+3}M^{3i+5}q'_k\eps^{-(i+2)} + h4^{3i+3}M^{3i+3}q_k'\eps^{-(i+1)}+q_k\\
    &\le 5\cdot4^{3i+3}M^{3i+5}q'_k\eps^{-(i+2)}.
\end{align*}
Adding the previous iterations, we have
\begin{align*}
  \|x_{1:t'+2(i+1)+1}\|_2 \le 4^{3(i+1)+2}M^{3(i+1)+2}q'_k\eps^{-(i+2)} ~.  
\end{align*}
\end{proof}

\subsection{Cost of linear control}\label{sec:exploitation}
\begin{lemma}\label{stab_state_bound_multi_dim_gen:lem}
If $\|f_{0: T-1}\|_2 \le q_k$, and $u_{t} = -K x_{t}$ for $t \ge t^* \ge \tstart{k}$, with $\|A-BK\|_2 \le 1/2$ then for $h \le \frac{1}{6}$,  
\begin{align*}
    \|x_{1:\tend{k}}\|^2_2 \le \frac{ 18 \|x_{1:t^{*}}\|^2_2 +  72 q^2_k}{7}.
\end{align*}
\end{lemma}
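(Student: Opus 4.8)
The plan is to exploit that for $t \ge t^*$ the closed loop is a genuine contraction driven by disturbances, and then to close a self-referential energy inequality. Since $u_t = -Kx_t$ for $t \ge t^*$, the dynamics become $x_{t+1} = (A-BK)x_t + z_t$ with $z_t = w_t + f_t$ and $\|A-BK\|_2 \le \tfrac12$. The first step is a per-step energy inequality: using $\|a+b\|_2^2 \le 2\|a\|_2^2 + 2\|b\|_2^2$ together with the contraction bound $\|A-BK\|_2^2\le\tfrac14$,
\[
\|x_{t+1}\|_2^2 \le 2\|(A-BK)x_t\|_2^2 + 2\|z_t\|_2^2 \le \tfrac12 \|x_t\|_2^2 + 2\|z_t\|_2^2, \qquad t \ge t^*.
\]

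Next I would sum this over $t = t^*, \dots, \tend{k}-1$. Writing $T_1 = \sum_{t=t^*+1}^{\tend{k}} \|x_t\|_2^2$ for the tail energy, the left side telescopes to $T_1$, while the first term on the right equals $\tfrac12\big(\|x_{t^*}\|_2^2 + \sum_{t=t^*+1}^{\tend{k}-1}\|x_t\|_2^2\big) \le \tfrac12(\|x_{t^*}\|_2^2 + T_1)$. Rearranging cancels the $\tfrac12 T_1$ and yields $T_1 \le \|x_{t^*}\|_2^2 + 4\sum_{t=t^*}^{\tend{k}-1}\|z_t\|_2^2$; it is precisely the contraction factor being strictly below one that makes the coefficient of $T_1$ collapse to a usable constant.

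The crux is bounding the accumulated disturbance energy $\sum \|z_t\|_2^2$. Using $\|z_t\|_2^2 \le 2\|w_t\|_2^2 + 2\|f_t\|_2^2$, extending the range to $0,\dots,\tend{k}-1$ by nonnegativity, $w_0 = 0$, Assumption~\ref{assumption:noise_bound} in the form $\|w_{1:\tend{k}-1}\|_2^2 \le h^2\|x_{1:\tend{k}}\|_2^2$, and $\|f_{0:T-1}\|_2 \le q_k$, I obtain $\sum\|z_t\|_2^2 \le 2h^2\|x_{1:\tend{k}}\|_2^2 + 2q_k^2$. The main obstacle is exactly this step: because $w_t$ depends on the entire trajectory (not merely on past state magnitudes), the tail disturbance energy is controlled only by the \emph{total} state energy $S \defeq \|x_{1:\tend{k}}\|_2^2$ itself, so the resulting bound is self-referential rather than a clean forward recursion, and the whole argument hinges on being able to absorb the $h^2 S$ term back into $S$.

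Finally I would close the loop. Since $S = \|x_{1:t^*}\|_2^2 + T_1$ and $\|x_{t^*}\|_2^2 \le \|x_{1:t^*}\|_2^2$, combining the previous two displays gives $S \le 2\|x_{1:t^*}\|_2^2 + 8h^2 S + 8q_k^2$, i.e. $(1 - 8h^2)S \le 2\|x_{1:t^*}\|_2^2 + 8q_k^2$. For $h \le \tfrac16$ we have $8h^2 \le \tfrac29$, hence $1 - 8h^2 \ge \tfrac79$; dividing by $7/9$ produces exactly $\|x_{1:\tend{k}}\|_2^2 \le \tfrac{18\|x_{1:t^*}\|_2^2 + 72 q_k^2}{7}$. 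The only remaining care is bookkeeping of the index ranges in the telescoping and the boundary term $\|x_{\tend{k}}\|_2^2$, which is dropped harmlessly since it is nonnegative.
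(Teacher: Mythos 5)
Your proof is correct and follows essentially the same route as the paper's: both establish the tail energy bound $\|x_{1:\tend{k}}\|_2^2 \le 2\|x_{1:t^*}\|_2^2 + 4\|z_{t^*:\tend{k}-1}\|_2^2$ (the paper via induction on prefix sums, you via summing the per-step contraction inequality, which is equivalent), then bound the disturbance energy self-referentially by $8h^2\|x_{1:\tend{k}}\|_2^2 + 8q_k^2$ and absorb using $1-8h^2 \ge 7/9$. The constants and final bound match exactly.
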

\begin{proof}
We first prove that $\|x_{t^{*}: t}\|^2_2 \le 4\|z_{t^{*}: t-1}\|^2_2 + 2\|x_{t^{*}}\|^2_2$ by induction on $t \geq t^{*}$.  For the base case, we have $\|x_{t^{*}: t^{*}}\|^2_2 \le 2 \|x_{t^{*}}\|^2_2$.  Now note that
\begin{align*}
   \|x_{t^{*}: t+1}\|^2_2 = \sum_{s=t^{*}}^{t+1} \|x_s\|^2_2 &= \|x_{t^{*}}\|^2_2 + \sum_{s=t^{*}}^{t} \|x_{s+1}\|^2_2\\
   &= \|x_{t^{*}}\|^2_2 + \sum_{s=t^{*}}^{t} \|(A - BK)x_s + z_s\|^2_2\\
   &\le \|x_{t^{*}}\|^2_2 + 2\sum_{s=t^{*}}^{t} \|(A - BK)\|^2_2\|x_s\|^2_2 + \|z_s\|^2_2\\
   &\le \|x_{t^{*}}\|^2_2 + \frac{1}{2}\sum_{s=t^{*}}^{t} \|x_s\|^2_2 + 2\sum_{s=t^{*}}^{t} \|z_s\|^2_2\\
   &= \|x_{t^{*}}\|^2_2 + \frac{\|x_{t^{*}: t}\|^2_2}{2} + 2\|z_{t^{*}: t}\|^2_2.
\end{align*}

Applying the inductive hypothesis, we have 
\begin{align*}
\|x_{t^{*}:t+1}\|^2_2 \le \|x_{t^{*}}\|^2_2 + \frac{\|x_{t^{*}:t}\|^2_2}{2} + 2\|z_{t^{*}:t}\|^2_2
    &\le \|x_{t^{*}}\|^2_2 + \frac{4\|z_{t^{*}:t-1}\|^2_2 + 2\|x_{t^{*}}\|^2_2}{2} + 2\|z_{t^{*}:t}\|^2_2 \\
    &\le 2\|x_{t^{*}}\|^2_2 + \frac{4 \|z_{t^{*}:t}\|^2_2}{2} + 2\|z_{t^{*}:t}\|^2_2\\
    &\le 4\|z_{t^{*}:t}\|^2_2 + 2\|x_{t^{*}}\|^2_2.
\end{align*}

Adding $\|x_{1:t^{*} -1}\|^2_2$ to both sides, we have, for $t\ge t^{*}$, $\|x_{1:t}\|^2_2 \le 2 \|x_{1:t^{*}}\|^2_2 + 4\|z_{t^{*}:t-1}\|^2_2$.

Using Assumption~\ref{assumption:noise_bound} and using the shorthand $w_s$ for $w_s(x_{1:s})$, we have 
\begin{align*}
   \|z_{t^{*}:t-1}\|^2_2 &= \sum_{s=t^{*}}^{t-1} \|z_s\|^2_2 =\sum_{s=t^{*}}^{t-1} \|w_s + f_s\|^2_2  \\
   &\le 2\sum_{s=t^{*}}^{t-1} \|w_s\|^2_2 + \|f_s\|^2_2 \\
   &\le 2h^2\|x_{1:t-1}\|^2_2 + 2\|f_{0:t-1}\|^2_2.
\end{align*}
Using this bound, we have
\begin{align*}
    \|x_{1:t}\|^2_2 \le 2 \|x_{1:t^{*}}\|^2_2 + 8h^2 \|x_{1:t-1}\|^2_2 + 8 \|f_{0:t-1}\|^2_2 \le  2 \|x_{1:t^{*}}\|^2_2 + 8h^2 \|x_{1:t}\|^2_2 + 8 \|f_{0:t-1}\|^2_2~.
\end{align*}
Rearranging and bounding using $h =\frac{1}{6}$, we have
\begin{align*}
    \|x_{1:t}\|^2_2 \le \frac{ 2 \|x_{1:t^{*}}\|^2_2 +  8 \|f_{0:t-1}\|^2_2}{1-8h^2} \le \frac{18\|x_{1:t^{*}}\|^2_2 +  72 \|f_{0:t-1}\|^2_2}{7}~.
\end{align*}
The result follows using $t= \tend{k}$ and using $\|f_{0:T-1}\| \le q_k$.
\end{proof}

\subsection{Exploration on Standard Basis}
We consider the case where $V = \{e_1, e_2, \dots, e_d\}$. 

\begin{lemma}\label{std_basis_epoch_cost:lem}
Suppose $h \le \frac{1}{12\sqrt{d}}$,  $V = \{e_1, e_2, \dots, e_d\}$, and $\eps = \frac{L}{150Md}$, then if  $\|f_{0:T-1}\|_2 \le q_k$ and $\alpha = \big(\frac{4^{14} M^{8}d^{2}}{L^{2}}\big)^d$, the running Alg.~\ref{l2_gain_multi_dim_gen:alg} has states bounded by
\begin{align*}
    \|x_{1:\tend{k}}\|_2 \le \alpha q_k~.
\end{align*}
\end{lemma}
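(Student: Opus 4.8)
The plan is to assemble the end-to-end state bound by chaining the four component lemmas already established for the generic exploration set, specialized to $N = |V| = d$ and $V = \{e_1,\dots,e_d\}$. First I would observe that since $q_k$ is a valid upper bound ($\|f_{0:T-1}\|_2 \le q_k$) and $\alpha \ge R = (4M)^{5d}\eps^{-2d}$, the hypotheses of Lemmas~\ref{controlid_gen:lem}, \ref{approx_A_gen:lem_2}, and \ref{K_stability_gen:lem} are all met, so the epoch runs through its three phases --- identify $B$, identify $A$, then stabilize --- without triggering a restart. Lemma~\ref{controlid_gen:lem} supplies an accurate $\hat B$ with $\|B\hat B^{-1}-I\|_2 \le 6\eps\sqrt d/L$, and Lemma~\ref{approx_A_gen:lem_2} supplies the per-direction estimate $\max_{i}\|(A-\hat A)e_i\|_2 \le 28\eps M\sqrt d/L + 3h$ together with the exploration state bound $\|x_{1:t'+2d}\|_2 \le Rq_k$.

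The key step specific to the standard basis is the conversion from per-column accuracy to a spectral-norm bound on $A-\hat A$. Because the $e_i$ are orthonormal, $\|A-\hat A\|_2 \le \|A-\hat A\|_F = \big(\sum_{i=1}^d \|(A-\hat A)e_i\|_2^2\big)^{1/2} \le \sqrt d\,(28\eps M\sqrt d/L + 3h) =: \eps_A$. I would then feed $\eps_A$ into Lemma~\ref{K_stability_gen:lem} to get $\|A-BK\|_2 \le \eps_A + 6\eps M\sqrt d/L$, and verify, using $\eps = L/(150Md)$ and $h \le 1/(12\sqrt d)$, that this is at most $\tfrac12$: each summand is $O(1/\sqrt d)$ with small enough constants (concretely $\eps_A \le 28/150 + 1/4 \le 0.44$ and $6\eps M\sqrt d/L \le 1/25$), so the total stays below $\tfrac12$ for every $d \ge 1$. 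This is the crux that makes $K = \hat B^{-1}\hat A$ stabilizing.

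With a stabilizing controller in hand, I would set $t^* = t'+2d$, the last exploration iteration, after which the algorithm plays $u_t = -Kx_t$ for the rest of the epoch. Applying Lemma~\ref{stab_state_bound_multi_dim_gen:lem} (valid since $h \le 1/(12\sqrt d) \le 1/6$) with $\|x_{1:t^*}\|_2 \le Rq_k$ yields $\|x_{1:\tend{k}}\|_2^2 \le (18R^2q_k^2 + 72q_k^2)/7$. Since $R$ exceeds a small constant, $72 \le 18R^2$ and hence $\|x_{1:\tend{k}}\|_2 \le 3R\,q_k$. The remaining arithmetic is to substitute $\eps = L/(150Md)$ into $R = (4M)^{5d}\eps^{-2d} = 4^{5d}150^{2d}M^{7d}d^{2d}/L^{2d}$ and compare with $\alpha = 4^{14d}M^{8d}d^{2d}/L^{2d}$; the ratio $\alpha/(3R) = (4^9/150^2)^d M^d/3 = (262144/22500)^d M^d/3 \ge 1$ for all $d\ge 1$ since $M \ge 1$, giving $3R \le \alpha$ and therefore $\|x_{1:\tend{k}}\|_2 \le \alpha q_k$.

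I expect the main obstacle to be purely in bookkeeping the constants: verifying simultaneously that the chosen $\eps$ and the robustness level $h$ drive $\|A-BK\|_2$ below $\tfrac12$ through the (lossy) Frobenius-to-spectral bound, \emph{and} that the same $\eps$ makes $\alpha$ dominate $R$ by the required factor. Both hinge on the $1/\sqrt d$ scaling of the per-direction error and the specific powers of $M$ and $d$ accumulated in $R$; the danger is an off-by-a-constant in a base or exponent that would break either the stability comparison or the $\alpha \ge 3R$ comparison, so I would track these exponents explicitly rather than rely on $\tilde O$ notation.
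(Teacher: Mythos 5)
Your proposal is correct and follows essentially the same route as the paper's proof: invoke the component lemmas, convert the per-column error to a spectral-norm bound via the Frobenius norm, verify $\|A-BK\|_2 < \tfrac12$ with the stated $\eps$ and $h$, apply the stabilization lemma with $t^* = t'+2d$ and $\|x_{1:t^*}\|_2 \le Rq_k$, and check arithmetically that the result is dominated by $\alpha q_k$. The only cosmetic difference is that you bound the final state energy by $3Rq_k$ and compare $3R$ to $\alpha$, whereas the paper bounds it by $(4M)^{6d}\eps^{-2d}q_k$ and uses $\eps > L/(4^4Md)$; both verifications go through.
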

\begin{proof}
We first note that $\alpha$ is sufficiently large such that Lem.~\ref{approx_A_gen:lem_2} holds, and we have for each $i$, $\|(A-\hat{A})e_i\|_2 \le \frac{28\eps M\sqrt{d}}{L} +  3h$.  Now we note,

\begin{align*}
    \|A - \hat{A}\|_2 \le \|A - \hat{A}\|_F = \sqrt{\sum_{i=1}^d \|Ae_i - \hat{A} e_i\|_2^2} \le \sqrt{d}\big(\frac{28\eps M\sqrt{d}}{L} +  3h\big).
\end{align*}
Applying, Lem.~\ref{K_stability_gen:lem} and plugging in bounds on $\eps$ and $h$, we have 
\begin{align*}
    \|A- BK\|_2 \le  \|A - \hat{A}\|_2 + \frac{6\eps M\sqrt{d}}{L} \le \frac{34\eps Md}{L} +  3h\sqrt{d} \le \frac{34}{150} + \frac{1}{4} < \frac{1}{2}~.
\end{align*}
Now applying, Lem.~\ref{stab_state_bound_multi_dim_gen:lem} along with the state bound $\|x_{1:t'+2d}\| \le (4M)^{5d} \eps^{-2d}q_k$ from Lem.~\ref{approx_A_gen:lem_2}, we have 
\begin{align*}
    \|x_{1:\tend{k}}\|^2_2 \le \frac{ 18 ((4M)^{5d} \eps^{-2d}q_k)^2 +  72 q^2_k}{7} \le ((4M)^{6d} \eps^{-2d}q_k)^2 ~.
\end{align*}
Noting that $\eps > \frac{L}{4^4 Md}$, we get our result by bounding $(4M)^{6d} \eps^{-2d}$. 
\end{proof}
\subsection{Exploration on \texorpdfstring{$\eps$}--net}
We consider the case where $V$ is an $\eps$-net of the unit sphere.

From Lemma 5.3 of \cite{vershynin2011introduction}, there exists an $\eps$-net for the unit sphere of size $\big(1 + \frac{2}{\eps}\big)^d$.  We consider $V = \mathcal{N}_{1/2,d}$ such that $N = |V| = 5^d$.

\begin{lemma}\label{eps_net_epoch_cost:lem}
Suppose $h \le \frac{1}{15}$,  $V = \mathcal{N}_{1/2,d}$, and $\eps = \frac{L}{1000M\sqrt{d}}$, then if  $\|f_{0:T-1}\|_2 \le q_k$ and $\alpha = (\frac{4^{16} M^8 d}{L^2})^{5^d}$, the running Alg.~\ref{l2_gain_multi_dim_gen:alg} has states bounded by
\begin{align*}
    \|x_{1:\tend{k}}\|_2 \le \alpha q_k~.
\end{align*}
\end{lemma}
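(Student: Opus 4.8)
The plan is to follow the structure of the standard-basis proof of Lem.~\ref{std_basis_epoch_cost:lem}, with one essential change in how we pass from the per-direction error guarantee of Lem.~\ref{approx_A_gen:lem_2} to a spectral-norm bound on $A-\hat{A}$. In the standard-basis case that step loses a factor $\sqrt{d}$ (via $\|\cdot\|_2\le\|\cdot\|_F$), which is precisely what forces $h=O(1/\sqrt{d})$; here, because $V=\mathcal{N}_{1/2,d}$ covers the entire sphere, I would instead use a covering (net) argument that costs only a constant factor, giving the advertised dimension-free robustness $h\le 1/15$.

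First I would establish the net inequality: for any matrix $E$ and any $\delta$-net $V$ of $\mathbb{S}^{d-1}$, $\|E\|_2\le\frac{1}{1-\delta}\max_{v\in V}\|Ev\|_2$. This is standard: taking a unit vector $x$ with $\|Ex\|_2$ arbitrarily close to $\|E\|_2$ and a net point $v$ with $\|x-v\|_2\le\delta$, the triangle inequality gives $\|Ex\|_2\le\max_{v\in V}\|Ev\|_2+\delta\|E\|_2$, and rearranging yields the claim. With $\delta=\tfrac12$ this gives $\|A-\hat{A}\|_2\le 2\max_{v\in V}\|(A-\hat{A})v\|_2$. Next I would check that Lem.~\ref{approx_A_gen:lem_2} applies, i.e.\ that $\alpha>R=(4M)^{5N}\eps^{-2N}$ with $N=5^d$; substituting $\eps=\frac{L}{1000M\sqrt d}$ shows the base of $\alpha$ strictly dominates the base of $R$, so this holds. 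Combining the net inequality with Lem.~\ref{approx_A_gen:lem_2} gives $\|A-\hat{A}\|_2\le\frac{56\eps M\sqrt d}{L}+6h$, and then Lem.~\ref{K_stability_gen:lem} yields $\|A-BK\|_2\le\frac{62\eps M\sqrt d}{L}+6h$. Since $\frac{\eps M\sqrt d}{L}=\frac{1}{1000}$, this is at most $\frac{62}{1000}+6h\le 0.062+\frac{6}{15}<\frac12$, which verifies the stabilization hypothesis needed downstream.

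With stabilization in hand, I would take $t^{*}=t'+2N$ (the last exploration step), use the exploration state bound $\|x_{1:t^{*}}\|_2\le Rq_k$ from Lem.~\ref{approx_A_gen:lem_2}, and note $h\le\frac{1}{15}\le\frac16$, so Lem.~\ref{stab_state_bound_multi_dim_gen:lem} gives $\|x_{1:\tend{k}}\|_2^2\le\frac{18R^2+72}{7}q_k^2\le 13R^2q_k^2$, hence $\|x_{1:\tend{k}}\|_2\le 4Rq_k$. The remainder is arithmetic: substituting $\eps$ gives $R=\big(\tfrac{4^{5}\cdot 10^{6}M^{7}d}{L^{2}}\big)^{N}$, and since $\frac{4^{16}M^{8}d/L^{2}}{4^{5}\cdot 10^{6}M^{7}d/L^{2}}=\frac{4^{11}}{10^{6}}M\ge 4$, the chosen $\alpha=\big(\tfrac{4^{16}M^{8}d}{L^{2}}\big)^{N}$ satisfies $\alpha\ge 4^{N}R\ge 4R$, so $\|x_{1:\tend{k}}\|_2\le 4Rq_k\le\alpha q_k$, as desired.

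The main obstacle is not conceptual---the covering argument is routine and its constant-factor cost is exactly the point---but rather the careful bookkeeping of the singly/doubly exponential constants. Because $N=5^{d}$ sits in the exponent, I must ensure the base selected for $\alpha$ exceeds the base of $R$ by a fixed factor ($\ge 4$) so that the $N$-th power comfortably absorbs all the stray multiplicative constants ($4$ from taking the square root of $13R^{2}$, and the applicability margin for Lem.~\ref{approx_A_gen:lem_2}); the slack $\frac{4^{11}}{10^{6}}M$ is what buys this, and confirming it is the delicate part of the argument.
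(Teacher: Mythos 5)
Your proposal is correct and follows essentially the same route as the paper's proof: the factor-$2$ covering bound $\|A-\hat{A}\|_2\le(1-1/2)^{-1}\max_{v\in V}\|(A-\hat{A})v\|_2$ applied to Lem.~\ref{approx_A_gen:lem_2}, then Lem.~\ref{K_stability_gen:lem} to get $\|A-BK\|_2<\frac12$, then Lem.~\ref{stab_state_bound_multi_dim_gen:lem} with the exploration state bound $Rq_k$. The only difference is cosmetic bookkeeping at the end (you compare the bases of $\alpha$ and $R$ directly, while the paper absorbs the constants by inflating $(4M)^{5N}$ to $(4M)^{6N}$); both yield $\|x_{1:\tend{k}}\|_2\le\alpha q_k$.
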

\begin{proof}
By Lem.~\ref{approx_A_gen:lem_2}, we have for each $v\in \mathcal{N}_{1/2,d}$, $\|(A-\hat{A})v\|_2 \le \frac{28\eps M\sqrt{d}}{L} +  3h$.  Now, we note that $\|A-\hat{A}\|_2 \le (1-1/2)^{-1}\max_{v \in \mathcal{N}_{1/2,d}} \|(A- \hat{A})v\|_2$ by a triangle inequality argument (see Lemma 5.4 of \cite{vershynin2011introduction}), so we have
$\|A - \hat{A}\|_2 \le \frac{56\eps M\sqrt{d}}{L} +  6h$. Applying, Lem.~\ref{K_stability_gen:lem} and plugging in bounds on $\eps$ and $h$, we have 
\begin{align*}
    \|A- BK\|_2 \le  \|A - \hat{A}\|_2 + \frac{6\eps M\sqrt{d}}{L} \le \frac{62\eps M\sqrt{d}}{L} +  6h \le \frac{62}{1000} + \frac{2}{5} < \frac{1}{2}~.
\end{align*}

Now applying, Lem.~\ref{stab_state_bound_multi_dim_gen:lem} along with the state bound $\|x_{1:t'+2N}\| \le (4M)^{5N} \eps^{-2N}q_k$ from Lem.~\ref{approx_A_gen:lem_2} with $N = 5^d$, we have 

\begin{align*}
    \|x_{1:\tend{k}}\|^2_2 \le \frac{ 18 ((4M)^{5N} \eps^{-2N}q_k)^2 +  72 q^2_k}{7} \le ((4M)^{6N} \eps^{-2N}q_k)^2 ~.
\end{align*}
Noting that $\eps > \frac{L}{4^5 M\sqrt{d}}$, we get our result by bounding $(4M)^{6N} \eps^{-2N}$. 
\end{proof}

\subsection{Final \texorpdfstring{$\ell_2$} --gain bounds}\label{final_bound:sec}
\begin{theorem}\label{std_basis:thm}
Suppose $h \le \frac{1}{12\sqrt{d}}$,  $V = \{e_1, e_2, \dots, e_d\}$, and $\eps = \frac{L}{150Md}$ and $\alpha = \big(\frac{4^{14} M^{8}d^{2}}{L^{2}}\big)^d$, then Alg.~\ref{l2_gain_multi_dim_gen:alg} has $\ell_2$ gain bounded by $\frac{10M^2\alpha^2}{L} < \big(\frac{4^{15} M^{10}d^{2}}{L^{3}}\big)^{2d}$.
\end{theorem}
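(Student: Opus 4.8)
The plan is to run a doubling-style epoch argument, reducing the $\ell_2$-gain bound to two ingredients: an upper bound $\|x_{1:T}\|_2 \le \alpha q_{K^*}$ on the state energy in terms of the final budget, and a lower bound $\|f_{0:T-1}\|_2 \ge \frac{L}{10M^2\alpha} q_{K^*}$ relating the final budget to the true disturbance energy. Here I let $K^*$ index the last epoch (the last call to Alg.~\ref{sysid_multi_dim_gen:alg}), and recall the epoch notation $q_{K^*} = \|x_{1:\tstart{K^*}}\|_2$. Composing the two ingredients gives $\ltgain(\A) = \|x_{1:T}\|_2/\|f_{0:T-1}\|_2 \le \alpha q_{K^*}/\|f_{0:T-1}\|_2 \le \frac{10M^2\alpha^2}{L}$, after which plugging in $\alpha = (\tfrac{4^{14}M^8 d^2}{L^2})^d$ and comparing exponents (using $M\ge1$, $L\le1$, $d\ge1$) yields the claimed closed form $\frac{10M^2\alpha^2}{L} < (\tfrac{4^{15}M^{10}d^2}{L^3})^{2d}$.

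For the upper bound I would first observe that the algorithm checks the guard $\|x_{1:t}\|_2 > \alpha q$ at \emph{every} observed state (in Alg.~\ref{l2_gain_multi_dim_gen:alg}, \ref{sysid_multi_dim_gen:alg}, and \ref{controlid_multi_dim_gen:alg}), so while no restart has fired within an epoch, all observed states obey $\|x_{1:t}\|_2 \le \alpha q_k$; since the final epoch reaches $T$ without a further restart, this gives $\|x_{1:T}\|_2 \le \alpha q_{K^*}$ irrespective of whether $q_{K^*}$ is a valid budget. For the lower bound I would invoke the contrapositive of Lemma~\ref{std_basis_epoch_cost:lem} together with the intermediate state bounds of Lemmas~\ref{control_matrix_estimation_gen:lem} and \ref{approx_A_gen:lem_1}: if the penultimate budget satisfied $q_{K^*-1} \ge \|f_{0:T-1}\|_2$, then throughout epoch $K^*-1$ (exploration and stabilization alike) the states would stay below $\alpha q_{K^*-1}$, no restart would trigger, and $K^*-1$ would already be the last epoch, a contradiction. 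Hence $q_{K^*-1} < \|f_{0:T-1}\|_2$. The base case $K^*=1$ is immediate, since $q_1 = \|x_1\|_2 = \|f_0\|_2 \le \|f_{0:T-1}\|_2$ using $x_0=u_0=0$.

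It remains to control the blow-up of the budget across the single restart, which I expect to be the main obstacle. Just before the restart at time $\tstart{K^*}$ the guard has not fired, so $\|x_{1:\tstart{K^*}-1}\|_2 \le \alpha q_{K^*-1}$; expanding $x_{\tstart{K^*}} = Ax_{\tstart{K^*}-1} + Bu_{\tstart{K^*}-1} + w_{\tstart{K^*}-1} + f_{\tstart{K^*}-1}$ and bounding each term is routine \emph{except} for $\|Bu_{\tstart{K^*}-1}\|_2$, since the control played at the previous step may be a large exploration control. The key estimate is that the largest exploration magnitude satisfies $\xi_{d-1}\|\hat{B}^{-1}\|_2 \le \frac{2}{L}\xi_{d-1}$ with $\xi_{d-1} \le R\,q_{K^*-1}$ and $R=(4M)^{5d}\eps^{-2d} \le \alpha$, so $\|Bu_{\tstart{K^*}-1}\|_2 \le \frac{2M}{L}\alpha q_{K^*-1}$; combining this with $\|w_{\tstart{K^*}-1}\|_2 \le h\alpha q_{K^*-1}$, $\|f_{\tstart{K^*}-1}\|_2 \le \|f_{0:T-1}\|_2$, and $q_{K^*-1} < \|f_{0:T-1}\|_2$ yields $\|x_{\tstart{K^*}}\|_2 = O\big(\tfrac{M\alpha}{L}\big)\|f_{0:T-1}\|_2$, whence $q_{K^*}^2 = \|x_{1:\tstart{K^*}-1}\|_2^2 + \|x_{\tstart{K^*}}\|_2^2 \le (\tfrac{10M^2\alpha}{L})^2\|f_{0:T-1}\|_2^2$. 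This is precisely the lower-bound ingredient, and the pieces then assemble into the stated $\ell_2$-gain bound.
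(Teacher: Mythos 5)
Your proposal matches the paper's proof in all essentials: the same decomposition into the guard-enforced upper bound $\|x_{1:T}\|_2 \le \alpha q$, the contrapositive of Lemma~\ref{std_basis_epoch_cost:lem} to show the penultimate budget was exceeded by $\|f_{0:T-1}\|_2$, and the same one-step expansion (with the large exploration control and the linear control $\|K\|_2 \le 2M/L$ as the dominant terms) to control the jump from $q_{K^*-1}$ to $q_{K^*}$. The only case you do not explicitly enumerate is a restart triggered by $\sigma_{\min}(\hat{B}) < L/2$ rather than by a state-norm guard, but that case is strictly easier since there $q_{K^*} \le \alpha q_{K^*-1}$ with no state expansion to bound.
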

\begin{proof}
First, observe that $\|x_{1:T}\|_2 \le \alpha q_k$, where $k$ is the final epoch. Indeed, if $\tstart{k} < T$, then by the design of the algorithm this condition is satisfied. Otherwise, we take $q_k = \|x_{1:T}\|_2$, and the algorithm stops before entering the system identification subroutine. Now we will show that running  Alg.~\ref{l2_gain_multi_dim_gen:alg}, $\|f_{0:T-1}\|_2 \ge \frac{q_k L}{10 M^2 \alpha}$, so 
\begin{align*}
    \frac{\|x_{1:T}\|_2}{\|f_{0:T-1}\|_2} \le \frac{10M^2\alpha^2}{L}~.
\end{align*}

We break into three cases:
\begin{enumerate}
    \item No failure occurred.
    \item $\sigma_{\min}(\hat{B}) < \frac{L}{2}$ in Alg.~\ref{controlid_multi_dim_gen:alg} (line 10).
    \item Failure check $\|x_{1:\tstart{k}}\|_2 > \alpha q_{k-1}$ occurs in Alg.~\ref{l2_gain_multi_dim_gen:alg} (line 5), Alg.~\ref{controlid_multi_dim_gen:alg} (line 4), or Alg.~\ref{sysid_multi_dim_gen:alg} (line 7),or 
    Alg.~\ref{controlid_multi_dim_gen:alg} (line 10).
\end{enumerate}

We first note that $q_k = \|x_{1:\tstart{k}}\|_2$ by definition.  We also note that if $k > 1$ (Cases $2$ and $3$), $\|f_{0:T-1}\|_2 > q_{k-1}$.  Suppose $\|f_{0:T-1}\|_2 \le q_{k-1}$, then by Lem.~\ref{std_basis_epoch_cost:lem} and choice of $\alpha$, the epoch $k-1$ would never have ended.  We now analyze each case separately.

\paragraph{Case 1: Failure never occurs}
Here we must have $\|x_{1:T}\|_2 = 0$ because $q$ is initialized at $0$.  $K$ is initialized to $0$, so  $\|u_{1:{T-1}}\|_2 = 0$ and  $\|f_{0:{T-1}}\|_2 = 0 = q$.

\paragraph{Case 2: Failure occurs in Alg. \ref{controlid_multi_dim_gen:alg} (line 10) second condition\\}
 We know $\sigma_{\min}(B) > L$, so we must have $\|\hat{B} - B\| > \frac{L}{2}$. By Lemma \ref{controlid_gen:lem}, if $\|f_{0:T-1}\| \le q_{k-1}$,  $\|\hat{B} - B\| \le 3 \sqrt{d} \eps \le \frac{L}{2}$, so by contradiction we must have $\|f_{0:T-1}\| > q_{k-1}$.  We now note that $q_k \le \alpha q_{k-1}$, otherwise, we would have failed the other condition of the if-statement. Combining, we have $\|f_{0:T-1}\| > \frac{q_k}{\alpha}$.

 \paragraph{Case 3: Failure occurs in Alg.~\ref{l2_gain_multi_dim_gen:alg} (line 5), in Alg.~\ref{controlid_multi_dim_gen:alg} (line 4), or Alg.~\ref{sysid_multi_dim_gen:alg} (line 7), or the first condition of Alg.~\ref{controlid_multi_dim_gen:alg} (line 10)\\}
 There are three possibilities for the control in the previous iteration: $u_{\tstart{k}-1} = -K x_{\tstart{k}-1}$, $u_{\tstart{k}-1} = 0$, or $u_{\tstart{k}-1}$ is from Alg.~\ref{controlid_multi_dim_gen:alg} (line 5) or Alg.~\ref{sysid_multi_dim_gen:alg} (line 11) and is a fixed control such that $\|u_{\tstart{k}-1}\|_2 < \alpha q_{k-1}$. To see this, we note that exploration controls are progressively increasing so we just need to look at the last large control played by Alg.~\ref{sysid_multi_dim_gen:alg}.  Thus, $\|u_{\tstart{k}-1}\|_2 \le \|\hat{B}^{-1}\| \xi_N \le \frac{2\xi_N}{L} \le \alpha q_{k-1}$.

For the first case, we note that
\begin{align*}
   \|K\|_2 = \|\hat{B}^{-1}\hat{A}\|_2 \le \|\hat{B}^{-1}\|_2\|\hat{A}\|_2 
    \le \frac{2M}{L} ~.
\end{align*}
Above, we use the fact that Alg.~\ref{controlid_multi_dim_gen:alg} always produces a $\hat{B}$ with $\sigma_{\min}(\hat{B}) \ge \frac{L}{2}$ and  $\|\hat{A}\|_2 < M$.  Noting that $\|x_{1:\tstart{k}-1}\|_2 \le \alpha q_{k-1}$, because otherwise the epoch would have ended on the previous iteration , we have $\|u_{\tstart{k}-1}\|_2 \le \frac{2M\alpha q_{k-1}}{L}$ in all cases.

  We now bound $\|x_{\tstart{k}}\|_2$ by applying the triangle inequality and system bounds:
\begin{align*}
    \|x_{\tstart{k}}\|_2 &= \|Ax_{\tstart{k}-1} + Bu_{\tstart{k}-1} +  w_{\tstart{k}-1} + f_{\tstart{k}-1}\|_2 \\
    &\le M\|x_{\tstart{k}-1} \|_2 + M \|u_{\tstart{k}-1}\|_2 + \|w_{\tstart{k}-1}\|_2 + \|f_{\tstart{k}-1}\|_2 \\
    &\le  M\| x_{1:\tstart{k}-1} \|_2 + M \|u_{\tstart{k}-1}\|_2 + h\|x_{1:\tstart{k}-1}\|_2 + \|f_{\tstart{k}-1}\|_2 \\
    & \le \frac{4M^2\alpha}{L} q_{k-1}  + \|f_{\tstart{k}-1}\|_2
\end{align*}

Adding the previous iterations, we have 
\begin{align*}
    q_k &= \|x_{1:\tstart{k}}\|_2 \le \|x_{1:\tstart{k}-1}\|_2 + \frac{4M^2\alpha}{L} q_{k-1}  + \|f_{\tstart{k}-1}\|_2 \\
    &\le \alpha q_{k-1} +\frac{4M^2\alpha}{L} q_{k-1}  + \|f_{\tstart{k}-1}\|_2 
    \le  \frac{5M^2\alpha}{L} q_{k-1}  + \|f_{\tstart{k}-1}\|_2~.
\end{align*}

Suppose $\|f_{\tstart{k}-1}\|_2> \frac{5M^2\alpha}{L} q_{k-1} $, then we immediately have $\|f_{0:T-1}\|_2 \ge \frac{q_k}{2}$.  Alternatively, we have $q_k \le \frac{10 M^2\alpha q_{k-1}}{L}$. Now since $\|f_{0:T-1}\|_2 > q_{k-1}$ , we have $\|f_{0:T-1}\|_2 > \frac{Lq_k}{10 M^2\alpha}$.

\end{proof}

\begin{theorem}\label{eps_net:them}
Suppose $h \le \frac{1}{15}$, $V = \mathcal{N}_{1/2,d}$, $\eps = \frac{L}{1000M\sqrt{d}}$, and $\alpha = (\frac{4^{16} M^8 d}{L^2})^{5^d}$, then Alg.~\ref{l2_gain_multi_dim_gen:alg} has $\ell_2$ gain bounded by $(\frac{4^{17} M^{10} d}{L^3})^{2 \cdot 5^d}$.
\end{theorem}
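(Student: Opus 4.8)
The plan is to follow the proof of Theorem~\ref{std_basis:thm} essentially verbatim, substituting the $\eps$-net epoch-cost guarantee (Lemma~\ref{eps_net_epoch_cost:lem}) for its standard-basis analogue (Lemma~\ref{std_basis_epoch_cost:lem}) together with the corresponding values $h \le \tfrac{1}{15}$, $\eps = \tfrac{L}{1000M\sqrt d}$, and $\alpha = (\tfrac{4^{16}M^8 d}{L^2})^{5^d}$. Both theorems analyze the same generic routines (Alg.~\ref{l2_gain_multi_dim_gen:alg}--\ref{controlid_multi_dim_gen:alg}) and differ only in the exploration set $V$ and the induced constants, so the entire epoch-management argument transfers unchanged; only the final arithmetic collapsing the constants must be redone.

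First I would establish the upper bound $\|x_{1:T}\|_2 \le \alpha q_k$ for the terminal epoch $k$. This follows because, by the design of Alg.~\ref{l2_gain_multi_dim_gen:alg}, any epoch that does not run out the horizon satisfies this bound via Lemma~\ref{eps_net_epoch_cost:lem}, whose hypotheses coincide with those of the present theorem; and if the final epoch exhausts the horizon before entering the identification subroutine, then $q_k = \|x_{1:T}\|_2$ trivially. Next I would produce the matching lower bound $\|f_{0:T-1}\|_2 \ge \tfrac{L q_k}{10 M^2 \alpha}$ by the same three-case split: (i) no failure ever triggers, so all quantities including $q$ are zero; (ii) the singular-value check $\sigma_{\min}(\hat B) < L/2$ fires, where Lemma~\ref{controlid_gen:lem} forces $\|f_{0:T-1}\|_2 > q_{k-1}$ and the threshold condition gives $q_k \le \alpha q_{k-1}$; and (iii) one of the state-growth checks $\|x_{1:\cdot}\|_2 > \alpha q$ fires, where bounding the single-step expansion through $\|u_{\tstart{k}-1}\|_2 \le \alpha q_{k-1}$ and $\|K\|_2 \le 2M/L$ yields $q_k \le \tfrac{5M^2\alpha}{L}q_{k-1} + \|f_{\tstart{k}-1}\|_2$, hence $\|f_{0:T-1}\|_2 > \tfrac{L q_k}{10M^2\alpha}$. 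Combining the two bounds gives $\ltgain(\A) \le \tfrac{10M^2\alpha^2}{L}$.

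The remaining step is purely arithmetic. Substituting $\alpha = (\tfrac{4^{16}M^8 d}{L^2})^{5^d}$ gives $\tfrac{10M^2\alpha^2}{L} = \tfrac{10M^2}{L}(\tfrac{4^{16}M^8 d}{L^2})^{2\cdot 5^d}$, and since the per-factor ratio $(\tfrac{4^{17}M^{10}d}{L^3})/(\tfrac{4^{16}M^8 d}{L^2}) = \tfrac{4M^2}{L} \ge 4$ raised to the power $2\cdot 5^d \ge 10$ absorbs the prefactor $\tfrac{10M^2}{L}$, this is at most $(\tfrac{4^{17}M^{10}d}{L^3})^{2\cdot 5^d}$, as claimed.

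The main obstacle---indeed the only point requiring genuine verification rather than transcription---is Case (iii), where I must re-check that the control-magnitude bound $\|u_{\tstart{k}-1}\|_2 \le \alpha q_{k-1}$ survives the enlargement of the exploration set to $N = 5^d$ vectors. The largest exploratory control is scaled by $\xi_{N-1} \approx 4^{3N}M^{3N}\eps^{-N}q'_{k-1}$, so I must confirm $\tfrac{2\xi_{N-1}}{L} \le \alpha q_{k-1}$; after substituting $\eps = \tfrac{L}{1000M\sqrt d}$ and $q'_{k-1} = 4^{2d}M^{2d}\eps^{-d}q_{k-1}$, this holds because the exponent-$5^d$ growth of $\alpha$ with its larger base $\tfrac{4^{16}M^8 d}{L^2}$ dominates the corresponding growth of $\xi_{N-1}$. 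Verifying that this and the other per-epoch constants remain consistent with $N = 5^d$ is the one place where the argument is not a literal copy of Theorem~\ref{std_basis:thm}.
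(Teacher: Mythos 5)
Your proposal is correct and matches the paper's approach exactly: the paper's own proof of this theorem is literally the one-line reduction "this follows exactly as Theorem~\ref{std_basis:thm} using Lem.~\ref{eps_net_epoch_cost:lem} in place of Lem.~\ref{std_basis_epoch_cost:lem}," and your expansion of the three-case argument, the final arithmetic, and the re-verification of $\|u_{\tstart{k}-1}\|_2 \le \alpha q_{k-1}$ for $N = 5^d$ all check out.
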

\begin{proof}
This follows exactly as Theorem~\ref{std_basis:thm} using Lem.~\ref{eps_net_epoch_cost:lem} in place of Lem.~\ref{std_basis_epoch_cost:lem}.

\end{proof}
\section{Cusumano-Poolla Algorithm}\label{poola:sec}
While Thm.~\ref{std_basis_abs:thm} and Thm.~\ref{eps_net:them} achieve robustness independent of any system parameters with exponential and doubly-exponential $\ltgain$ respectively, these algorithms are only applicable in the limiting settings where the control input matrix $B$ is full rank.  In contrast, the general task of designing adaptive controllers with finite closed loop $\ltgain$ has been solved by \citet{CusumanoPoollaACC1988}.  The Cusumano-Poolla algorithm works by iteratively testing all controllers, switching to a new controller when the gain exceeds some proposed bound.  As long as their is a candidate controller that can satisfy the proposed bound, this algorithm will eventually converge.  For completeness, we will analyze this algorithm, in the case that that our nominal 
linear dynamical system is strongly stabilizable  \citep{cohen2018online}, a quantitative notion of stabilizability.  Unlike our main setting, here we assume $B \in \R^{d \times p}$ so controls $u_t \in \R^p$.

\begin{definition}\label{strong_stabilizability:def}
A LDS $(A, B)$ is $\kappa$-strongly stabilizable if there exists a linear controller $K$ with $\|K\|_2\le \kappa$ such that $A +BK = HLH^{-1}$, where $H, L$ are matrices satisfying $\|H\|_2,\|H^{-1}\|_2, \|H\|_2\|H^{-1}\|_2\le \kappa$ and $\|L\|_2 \le 1 - \frac{1}{\kappa}$.
\end{definition}

We also will need to use an $\eps$-net over candidate controllers for this algorithm.

\begin{definition}\label{def:spec_eps_net}
Let $\K = \{K \in \R^{d\times p} : \|K\|_2 \le \kappa\}$ be the spectral norm ball of radius $\kappa$.
We define $\mathcal{N}^{*}_{\eps,\mK} \subseteq \mK$ to be an $\eps$-net of $\mK$,  with the metric $d(X,Y) = \|X-Y\|_2$ if for any $K \in \mK$, we have $K' \in \mathcal{N}^{*}_{\eps,\kappa, d \times p}$ such that $\|K-K'\|_2 \le \eps$.
\end{definition}

\begin{algorithm2e}[h]
\label{cusumano_poola:alg}
\KwIn{System upper bound M, strong controllability parameter $\kappa$, disturbance bound $F$.} 
Set $\alpha=27\kappa^4, \mK' = \mathcal{N}^{*}_{\frac{1}{2M\kappa^2},\mK}$ \\
Initialize $q=F$, pick arbitrary controller $K$ from $\mK'$.\\
\For{$t = 1 \ldots  T$}{
Observe $x_t$.\\
\eIf{ $\|x_{1:t}\|_2 > \alpha q$}{
Update $q = \|x_{1:t}\|_2$.\\
Update candidate controllers: $\mK' = \mK' \setminus \{K\}$.\\
Pick any $K$ from $\mK$.
}
{
Execute $u_t = -K x_t$.
}}
\caption{Cusumano-Poolla algorithm}
\end{algorithm2e}

\begin{theorem}
If $F= \|f_{0:T-1}\|_2$, and $h < \frac{1}{5\kappa^4}$, then Alg.~\ref{cusumano_poola:alg} has $\ltgain$ bounded by 
$ \big(135\kappa^5 M \big )^{(4M\kappa^3\sqrt{dp})^{dp}}$.
\end{theorem}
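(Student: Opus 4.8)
The plan is to exploit the fact that the $\eps$-net $\mK'$ over the controller ball $\mK$ contains a controller that approximately matches the true strongly-stabilizing one, and to show that once the algorithm reaches such a controller it never switches again, so the budget $q$ can grow at most once per net element. First I would produce a \emph{good} controller $\hat K \in \mK'$: by Definition~\ref{strong_stabilizability:def} there is $K^{*}$ with $\|K^{*}\|_2 \le \kappa$ and $A - BK^{*} = HLH^{-1}$, $\|L\|_2 \le 1 - \tfrac1\kappa$. Taking $\hat K \in \mK'$ with $\|\hat K - K^{*}\|_2 \le \tfrac{1}{2M\kappa^2}$, the perturbation $E = B(K^{*}-\hat K)$ satisfies $\|E\|_2 \le \tfrac{1}{2\kappa^2}$, and conjugating by $H$ gives $A - B\hat K = H(L + H^{-1}EH)H^{-1}$ with $\|H^{-1}EH\|_2 \le \|H\|_2\|H^{-1}\|_2\|E\|_2 \le \tfrac{1}{2\kappa}$. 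Hence the closed loop contracts in $H$-coordinates at rate $\|L + H^{-1}EH\|_2 \le 1 - \tfrac{1}{2\kappa}$, so $\|(A-B\hat K)^{j}\|_2 \le \kappa(1-\tfrac{1}{2\kappa})^{j}$.

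The core step --- and the main obstacle --- is to show that once $\hat K$ becomes the active controller while the current budget satisfies $q \ge F$, the invariant $\|x_{1:t}\|_2 \le \alpha q$ is never violated, so no further failure is triggered. I would unroll $x_{t+1} = (A-B\hat K)x_t + z_t$ from the selection time $t_0$, splitting into a homogeneous term driven by $x_{t_0}$ (with $\|x_{t_0}\|_2 \le q$) and a convolution term. The geometric kernel $\kappa(1-\tfrac{1}{2\kappa})^{j}$ has $\ell_1$ mass $2\kappa^2$, so Young's inequality bounds the convolution by $2\kappa^2\|z_{t_0:T-1}\|_2$ and the homogeneous part by $O(\kappa^{3/2})q$. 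Using $z_t = w_t + f_t$ with $\|w_{1:T}\|_2 \le h\|x_{1:T}\|_2$ and $\|f_{0:T-1}\|_2 = F \le q$ gives a self-referential bound $\|x_{1:T}\|_2 \le O(\kappa^2)q + 2\kappa^2 h\|x_{1:T}\|_2$; the hypothesis $h < \tfrac{1}{5\kappa^4}$ makes $2\kappa^2 h$ a small constant, so rearranging yields $\|x_{1:T}\|_2 \le 9\kappa^2 q < 27\kappa^4 q = \alpha q$. The delicate point is choosing the constants (the value $\alpha = 27\kappa^4$ and the threshold $h < \tfrac{1}{5\kappa^4}$) so that the homogeneous initial-state contribution, the convolution gain, and the misspecification feedback all fit comfortably under $\alpha q$.

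Finally I would assemble the global bound. Since any failing controller is discarded and $\hat K$ never fails, the number of budget updates is at most $|\mK'| - 1$. For the per-failure growth, when a failure fires at time $t$ we have $\|x_{1:t-1}\|_2 \le \alpha q$ (no failure at $t-1$) and a single-step expansion $\|x_t\|_2 \le M(1+\kappa)\|x_{t-1}\|_2 + \|z_{t-1}\|_2$ with $\|z_{t-1}\|_2 \le h\alpha q + F$, yielding $q_{\mathrm{new}} = \|x_{1:t}\|_2 \le 135\kappa^5 M\, q$. Bounding the net size by the covering number of the spectral ball (via its Frobenius embedding), $|\mK'| \le (4M\kappa^3\sqrt{dp})^{dp} =: N$, we get $q_{\mathrm{final}} \le (135\kappa^5 M)^{N-1}F$; combining with $\|x_{1:T}\|_2 \le \alpha q_{\mathrm{final}}$ and $\alpha = 27\kappa^4 \le 135\kappa^5 M$ gives $\ltgain \le \alpha(135\kappa^5 M)^{N-1} \le (135\kappa^5 M)^{N}$, which is the claimed bound.
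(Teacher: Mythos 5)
Your proposal is correct and follows the same architecture as the paper's proof: exhibit a controller in the $\eps$-net whose closed loop is a $\kappa$-conditioned contraction at rate $1-\tfrac{1}{2\kappa}$ (so it never triggers a failure once active, since $q\ge F$ always holds), bound the one-step state expansion by $5\kappa\alpha M$ per failure, and compose over at most $|\mK'|\le(4M\kappa^3\sqrt{dp})^{dp}$ failures. The only substantive difference is in how you prove the good controller never fails: you unroll the dynamics in $H$-coordinates and apply Young's convolution inequality to the geometric kernel (with $\ell_1$ mass $2\kappa^2$), whereas the paper runs an induction on the invariant $\|y_{t^*:t}\|_2^2 \le 12\kappa^5\|z_{t^*:t-1}\|_2^2+4\kappa^2\|y_{t^*}\|_2^2$ before converting back to $x$-coordinates. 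Both routes yield the same self-referential inequality in $h\|x_{1:T}\|_2$ and land under $\alpha q=27\kappa^4 q$ with room to spare; yours is arguably more transparent about where the $\kappa$ powers come from, while the paper's induction avoids squaring the triangle inequality and so tracks slightly tighter constants. No gaps.
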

\begin{proof}
We first note that there exists a controller $K \in \K'$ such that Lem.~\ref{poolla_good_controller:lem} holds, in which case the controller will never switch. To get an $\ltgain$ bound, we need to bound the state expansion that occurs using any other $K \in \mK'$.  We note that $\|A+BK\|_2 \le 2\kappa M$, when a controller pushes $\|x_{1:t}\|_2$ above $\alpha q$, we have 
\begin{align*}
    \|x_{t}\|_2 \le 2 \kappa M \|x_{t-1}\|_2 + \|w_{t-1}\|_2 + \|f_{t-1}\| \le (2 \kappa \alpha M + h + 1) q ~.
\end{align*}

Adding in $\|x_{1:t-1}\|_2 < \alpha q$, we have $\|x_{1:t}\|_2 \le 5\kappa \alpha M q$.  This state expansion can occur at most once per controller in $\K'$. By Lem.~\ref{spec_eps_net:lem}, $|\K'| < (4M\kappa^3\sqrt{dp})^{dp}$ and hence,
\begin{align*}
    \|x_{1:T}\|_2 \le \big(5\kappa \alpha M \big )^{(2M\kappa^3\sqrt{dp})^{dp}} F~.
\end{align*}
Plugging in $\alpha$, yields the result.
\end{proof}

\begin{lemma}\label{spec_eps_net:lem}
 There exists an $\eps$-net for $\mK$ such that  $|\mathcal{N}^{*}_{\eps,\mK}| \le \big ( \frac{2\kappa \sqrt{dp}}{\eps}\big)^{dp}$.
\end{lemma}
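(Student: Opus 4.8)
The plan is to treat this as a routine covering-number bound; the only points requiring care are that the covering is in the \emph{spectral} norm and that the net must be a subset of $\mK$ as demanded by Definition~\ref{def:spec_eps_net}.

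First I would reduce the spectral-norm covering to a Euclidean (Frobenius) covering. Since $\|X - Y\|_2 \le \|X - Y\|_F$ for all matrices, any set that is an $\eps$-net of $\mK$ in the Frobenius metric is automatically an $\eps$-net in the spectral metric; so it suffices to produce a Frobenius $\eps$-net of the claimed size. Identifying $\R^{d \times p}$ with $\R^{dp}$, the Frobenius norm becomes the Euclidean norm. I then bound the radius of $\mK$: if $\|K\|_2 \le \kappa$, then $\|K\|_F^2 = \sum_i \sigma_i(K)^2 \le \rank(K)\,\|K\|_2^2 \le \min(d,p)\,\kappa^2 \le dp\,\kappa^2$, so $\mK$ lies in the Euclidean ball of radius $R = \kappa\sqrt{dp}$ in $\R^{dp}$.

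Then I would run the standard volumetric (packing) argument \emph{inside} $\mK$, which guarantees the net is contained in $\mK$. Take $\mathcal{N} \subseteq \mK$ to be a maximal subset whose points are pairwise at Euclidean distance strictly greater than $\eps$. Maximality forces $\mathcal{N}$ to be an $\eps$-net: any $K \in \mK$ either lies in $\mathcal{N}$ or else could not be added, so some net point is within $\eps$ of it. Because distinct net points are more than $\eps$ apart, the open balls of radius $\eps/2$ about them are disjoint, and all are contained in the ball of radius $R + \eps/2$. Comparing volumes in $\R^{dp}$ (a radius-$r$ ball has volume proportional to $r^{dp}$) gives
\[
|\mathcal{N}| \;\le\; \Big(\frac{R + \eps/2}{\eps/2}\Big)^{dp} \;=\; \Big(1 + \frac{2R}{\eps}\Big)^{dp} \;\le\; \Big(\frac{2\kappa\sqrt{dp}}{\eps}\Big)^{dp},
\]
where the final step absorbs the additive $1$, which is legitimate since $\kappa \ge 1$ and $\eps$ is small in the regime of interest (the algorithm uses $\eps = \frac{1}{2M\kappa^2}$).

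I do not expect any real obstacle: the statement is a textbook covering estimate. The two things one must not get backwards are the direction of the spectral/Frobenius comparison (we want the finer Frobenius net, whose scale dominates the spectral scale) and the use of an internal maximal packing so that $\mathcal{N} \subseteq \mK$; the precise leading constant is a cosmetic simplification. (One could equally argue via an entrywise grid over the bounding cube $[-\kappa,\kappa]^{dp}$, using $|K_{ij}| \le \|K\|_2 \le \kappa$ and a per-coordinate spacing of $\eps/\sqrt{dp}$, which yields the same bound.)
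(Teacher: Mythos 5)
Your argument is correct in substance but takes a genuinely different route from the paper. The paper's proof is the one you relegate to your final parenthetical: an entrywise grid of spacing $\eps/\sqrt{dp}$ over the coordinates (each bounded by $\kappa$ since $|K_{ij}| \le \|K\|_2$), giving $\|K-K'\|_F \le \eps$ and hence $\|K-K'\|_2 \le \eps$, with exactly $\big(\tfrac{2\kappa\sqrt{dp}}{\eps}\big)^{dp}$ points. Your primary route instead bounds $\mK$ inside the Frobenius ball of radius $\kappa\sqrt{dp}$ and runs the maximal-packing volumetric argument. This buys you one thing the paper glosses over: your net is constructed \emph{inside} $\mK$, as Definition~\ref{def:spec_eps_net} formally requires, whereas raw grid points of $[-\kappa,\kappa]^{dp}$ need not have spectral norm at most $\kappa$ (one would project them onto the convex set $\mK$, a contraction, to repair this). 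The one flaw in your main route is the last step: the volumetric bound gives $\big(1+\tfrac{2\kappa\sqrt{dp}}{\eps}\big)^{dp}$, and the additive $1$ cannot be ``absorbed'' to yield the stated constant, since $1+x>x$ for every $x$; what is true is $\big(1+\tfrac{2R}{\eps}\big)^{dp} \le \big(\tfrac{4R}{\eps}\big)^{dp}$ once $2R/\eps \ge 1$, i.e.\ your route proves the lemma only with $2\kappa$ replaced by $4\kappa$. This extra $2^{dp}$ is immaterial downstream (the final $\ltgain$ bound is doubly exponential and insensitive to such constants), and your parenthetical grid argument recovers the exact constant, so the statement is fully established either way.
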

\begin{proof}
This follows by choosing a grid with granularity $\frac{\eps}{\sqrt{pd}}$ in each coordinate, assuring that for any $K$ there is a $K'$ in the net at most $\frac{\eps}{\sqrt{pd}}$ away in each coordinate.  This guarantees that $\|K -K'\|_F \le \eps$ and so $\|K - K'\|_2 \le \eps$.
\end{proof}

\begin{lemma}\label{poolla_good_controller:lem}
If $(A, B)$ is $\kappa$-strongly stabilizable, and $h < \frac{1}{5\kappa^{4}}$ then there exists a linear controller $K \in \mathcal{N}^{*}_{\frac{1}{2M\kappa^2},\mK}$ such that if the controller is played from iteration $t^{*}$ to iteration $T$, the states are bounded by

\begin{align*}
    \|x_{1:T}\|_2 \le \sqrt{100 \kappa^4 \|x_{1:t^{*}}\|^2_2 + 625\kappa^7 \|f_{0:T-1}\|^2_2} \le 27\kappa^{4} \max(\|x_{1:t^{*}}\|_2,\|f_{0:T-1}\|_2)~.
\end{align*}
\end{lemma}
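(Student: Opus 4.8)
The plan is to realize the stabilizing controller guaranteed by strong stabilizability as a nearby point of the net, show the resulting closed loop is contractive in a suitable coordinate system, and then sum the squared states along a telescoping recursion; the hypotheses on $h$ and $\kappa$ are tuned precisely so that the discretization error and the state-dependent disturbance do not destroy this contraction.

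First I would invoke Definition~\ref{strong_stabilizability:def} to obtain a gain $K^*$ with $\|K^*\|_2 \le \kappa$ and $A + BK^* = HLH^{-1}$, where $\|H\|_2, \|H^{-1}\|_2, \|H\|_2\|H^{-1}\|_2 \le \kappa$ and $\|L\|_2 \le 1 - \tfrac{1}{\kappa}$. Since Alg.~\ref{cusumano_poola:alg} plays $u_t = -Kx_t$, the relevant candidate is $-K^* \in \mK$, and by Definition~\ref{def:spec_eps_net} there is a net point $K \in \mathcal{N}^{*}_{\frac{1}{2M\kappa^2},\mK}$ with $\|K + K^*\|_2 \le \tfrac{1}{2M\kappa^2}$. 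The closed-loop matrix is then $A - BK = HLH^{-1} + E$ with $E = -B(K + K^*)$ and $\|E\|_2 \le M \cdot \tfrac{1}{2M\kappa^2} = \tfrac{1}{2\kappa^2}$. Conjugating by $H$, the transformed matrix $\tilde L := H^{-1}(A - BK)H = L + H^{-1}EH$ satisfies $\|\tilde L\|_2 \le \|L\|_2 + \|H^{-1}\|_2\|H\|_2\|E\|_2 \le (1 - \tfrac1\kappa) + \kappa \cdot \tfrac{1}{2\kappa^2} = 1 - \tfrac{1}{2\kappa} =: \rho$. This is the key gain: the contraction survives the net discretization error.

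Next, in transformed coordinates $y_t := H^{-1}x_t$ the closed-loop dynamics become $y_{t+1} = \tilde L y_t + \eta_t$ with $\eta_t = H^{-1}z_t$. Writing $a_t = \|y_t\|_2^2$ and applying Young's inequality with parameter $c = \tfrac{1}{4\kappa}$ gives $a_{t+1} \le (1+c)\rho^2 a_t + (1 + 1/c)\|\eta_t\|_2^2 \le (1 - \tfrac{1}{2\kappa}) a_t + 5\kappa\|\eta_t\|_2^2$, using $\rho^2 \le 1 - \tfrac{3}{4\kappa}$ for $\kappa \ge 1$. Summing over $t = t^*, \dots, T-1$ and telescoping the $a_t$ terms yields $\tfrac{1}{2\kappa}\sum_{t=t^*}^T a_t \le a_{t^*} + 5\kappa\sum_{t=t^*}^{T-1}\|\eta_t\|_2^2$, i.e. $\sum_{t=t^*}^T a_t \le 2\kappa a_{t^*} + 10\kappa^2\sum_{t=t^*}^{T-1}\|\eta_t\|_2^2$. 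Transferring back via $\|x_{t^*:T}\|_2^2 \le \|H\|_2^2\sum_t a_t$, $a_{t^*} \le \|H^{-1}\|_2^2\|x_{t^*}\|_2^2$, and $\|\eta_t\|_2 \le \|H^{-1}\|_2\|z_t\|_2$, and crucially collecting the conditioning as $(\|H\|_2\|H^{-1}\|_2)^2 \le \kappa^2$ rather than $\kappa^4$, I obtain $\|x_{t^*:T}\|_2^2 \le 2\kappa^3\|x_{t^*}\|_2^2 + 10\kappa^4\|z_{t^*:T-1}\|_2^2$.

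Finally I would close the recursion against the state-dependent disturbance. Using $\|z_t\|_2^2 \le 2\|w_t\|_2^2 + 2\|f_t\|_2^2$ together with Assumption~\ref{assumption:noise_bound} in the form $\|w_{t^*:T-1}\|_2 \le \|w_{1:T-1}\|_2 \le h\|x_{1:T}\|_2$ gives $\|z_{t^*:T-1}\|_2^2 \le 2h^2\|x_{1:T}\|_2^2 + 2\|f_{0:T-1}\|_2^2$. Adding the prefix $\|x_{1:t^*-1}\|_2^2 \le \|x_{1:t^*}\|_2^2$ and substituting yields $\|x_{1:T}\|_2^2 \le (1 + 2\kappa^3)\|x_{1:t^*}\|_2^2 + 20\kappa^4 h^2\|x_{1:T}\|_2^2 + 20\kappa^4\|f_{0:T-1}\|_2^2$. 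The hypothesis $h < \tfrac{1}{5\kappa^4}$ is exactly what forces $20\kappa^4 h^2 \le \tfrac45 < 1$, so the $\|x_{1:T}\|_2^2$ term is absorbed on the left, and rearranging gives $\|x_{1:T}\|_2^2 \le 15\kappa^3\|x_{1:t^*}\|_2^2 + 100\kappa^4\|f_{0:T-1}\|_2^2$, which is stronger than (hence implies) the claimed $100\kappa^4\|x_{1:t^*}\|_2^2 + 625\kappa^7\|f_{0:T-1}\|_2^2$ since $\kappa \ge 1$; the $27\kappa^4\max(\cdot,\cdot)$ form then follows from $\sqrt{100\kappa^4 + 625\kappa^7} \le 27\kappa^4$. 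The main obstacle is that the contraction lives only in the $H$-conjugated coordinates, so the whole recursion must be run there and translated back while keeping the conditioning factors from blowing up, and the self-referential $w_t$ puts the $\ell_2$ state energy on both sides of the inequality, controllable only because $h$ is taken inverse-polynomially small in $\kappa$.
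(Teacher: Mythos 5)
Your proof is correct and follows essentially the same route as the paper's: conjugate by $H$, verify that the net discretization preserves the contraction factor $1-\tfrac{1}{2\kappa}$, bound the cumulative state energy in the transformed coordinates, and use $h<\tfrac{1}{5\kappa^4}$ to absorb the self-referential disturbance term. The only differences are cosmetic (a per-step Young's inequality plus telescoping in place of the paper's induction on partial sums, and using the product bound $\|H\|_2\|H^{-1}\|_2\le\kappa$ rather than the individual bounds), which yields slightly tighter constants that still imply the stated inequality.
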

\begin{proof}
From strong stablizability, for some $K^{*} \in \K$ we can write $A+BK^{*} = HLH^{-1}$ with $\|H\|_2\|H^{-1}\|_2 \le \kappa$ and $\|L\|_2 \le 1 - \frac{1}{\kappa}$. We define $y_t = H^{-1} x_t$.  Let $K$ be a controller in the $\eps$-net such that $\|K -K^{*}\| \le \frac{1}{M\kappa^2}$.

We first prove that $\|y_{t^{*}: t}\|^2_2 \le 12\kappa^5 \|z_{t^{*}: t-1}\|^2_2 + 4\kappa^2\|y_{t^{*}}\|^2_2$ by induction on $t \geq t^{*}$.  For the base case, we have $\|y_{t^{*}: t^{*}}\|^2_2 \le 4\kappa^2 \|y_{t^{*}}\|^2_2$ since $\kappa \geq 1$.  Now note that
\begin{align*}
   \|y_{t^{*}: t+1}\|^2_2 = \sum_{s=t^{*}}^{t+1} \|y_s\|^2_2 &= \|y_{t^{*}}\|^2_2 + \sum_{s=t^{*}}^{t} \|y_{s+1}\|^2_2\\
   &= \|y_{t^{*}}\|^2_2 + \sum_{s=t^{*}}^{t} \|H^{-1}((A - BK)x_s + z_s)\|^2_2\\
   &= \|y_{t^{*}}\|^2_2 + \sum_{s=t^{*}}^{t} \|H^{-1}((A - BK^{*} + B(K - K^{*})))x_s + z_s)\|^2_2\\
   &= \|y_{t^{*}}\|^2_2 + \sum_{s=t^{*}}^{t} \|(L + H^{-1}B(K - K^{*})H) y_t +  H^{-1}z_s\|^2_2\\
   &\le \|y_{t^{*}}\|^2_2 + (1+\frac{1}{2\kappa})\sum_{s=t^{*}}^{t} \|L + H^{-1}B(K - K^{*})H\|^2_2\|y_s\|^2_2 + (1 + 2\kappa)\sum_{s=t^{*}}^{t}\kappa^2\|z_s\|^2_2\\
   &\le \|y_{t^{*}}\|^2_2 + (1- \frac{1}{4\kappa^2})\sum_{s=t^{*}}^{t} \|y_s\|^2_2 + 3\kappa^3\sum_{s=t^{*}}^{t} \|z_s\|^2_2\\
   &= \|y_{t^{*}}\|^2_2 + (1- \frac{1}{4\kappa^2})\|y_{t^{*}: t}\|^2_2 + 3\kappa^3\|z_{t^{*}: t}\|^2_2~.
\end{align*}

Above, we use the fact that 
\begin{align*}
    \|L + H^{-1}B(K - K^{*})H\|_2 \le \|L\|_2 + \|H\|_2\|H^{-1}\|_2\|B\|_2\|K - K^{*}\|_2 \le (1 - \frac{1}{\kappa}) + \kappa M \cdot \frac{1}{2M\kappa^2} \le 1-\frac{1}{2\kappa}~.
\end{align*}

Applying the inductive hypothesis, we have 
\begin{align*}
\|y_{t^{*}:t+1}\|^2_2 &\le \|y_{t^{*}}\|^2_2 + (1- \frac{1}{4\kappa^2})\|y_{t^{*}: t}\|^2_2 + 3\kappa^3\|z_{t^{*}: t}\|^2_2\\
    &\le \|y_{t^{*}}\|^2_2 + (1- \frac{1}{4\kappa^2})(12\kappa^5 \|z_{t^{*}: t-1}\|^2_2 + 4\kappa^2\|y_{t^{*}}\|^2_2) + 3\kappa^3\|z_{t^{*}: t}\|^2_2 \\
    &\le 4\kappa^2\|y_{t^{*}}\|^2_2 + 12\kappa^5 \|z_{t^{*}: t}\|^2_2~.\\
\end{align*}

Adding $\|y_{1:t^{*} -1}\|^2_2$ to both sides, we have, for $t\ge t^{*}$, $\|y_{1:t}\|^2_2 \le 4\kappa^2 \|y_{1:t^{*}}\|^2_2 + 12\kappa^5\|z_{t^{*}:t-1}\|^2_2$.  Now, by definition of $y_t$ and $\|H\|_2\|H^{-1}\|_2 \leq \kappa$, we note that 
 $\|x_{1:t}\|^2_2 \le 4\kappa^4 \|x_{1:t^{*}}\|^2_2 + 12\kappa^7\|z_{t^{*}:t-1}\|^2_2$.

Using Assumption~\ref{assumption:noise_bound} and using the shorthand $w_s$ for $w_s(x_{1:s})$, we have 
\begin{align*}
   \|z_{t^{*}:t-1}\|^2_2 &= \sum_{s=t^{*}}^{t-1} \|z_s\|^2_2 =\sum_{s=t^{*}}^{t-1} \|w_s + f_s\|^2_2  \\
   &\le 2\sum_{s=t^{*}}^{t-1} \|w_s\|^2_2 + \|f_s\|^2_2 \\
   &\le 2h^2\|x_{1:t-1}\|^2_2 + 2\|f_{0:t-1}\|^2_2.
\end{align*}
Using this bound, we have
\begin{align*}
    \|x_{1:t}\|^2_2 \le 4\kappa^4 \|x_{1:t^{*}}\|^2_2 + 24\kappa^7h^2 \|x_{1:t-1}\|^2_2 + 24\kappa^7 \|f_{0:t-1}\|^2_2 \le  4\kappa^4 \|x_{1:t^{*}}\|^2_2 + 24\kappa^7h^2 \|x_{1:t}\|^2_2 + 24\kappa^7 \|f_{0:t-1}\|^2_2~.
\end{align*}
Rearranging and plugging in the bound on $h$, we have
\begin{align*}
    \|x_{1:t}\|^2_2 \le \frac{ 4\kappa^4 \|x_{1:t^{*}}\|^2_2 +  24\kappa^7 \|f_{0:t-1}\|^2_2}{1-24\kappa^7h^2} \le 100 \kappa^4 \|x_{1:t^{*}}\|^2_2 + 625\kappa^7 \|f_{0:t-1}\|^2_2~.
\end{align*}
\end{proof}

\begin{remark}
For simplicity, Alg.~\ref{cusumano_poola:alg} assumes we know the total disturbance magnitude $\|f_{1:T}\|_2$ exactly.  The same doubling scheme from Alg.~\ref{l2_gain_multi_dim_gen:alg} can be adapted if the disturbance magnitude is not known by looping through the full $\eps$-net of controllers in epochs until an appropriate $F$ is found.
\end{remark}

\end{document}